\newcommand{\ip}{${\rm IP_3}$}
\newcommand{\jipr}{J_{\text{IPR}}}
\newcommand{\jserca}{J_{\text{SERCA}}}
\newcommand{\hinf}{h_{\infty}}
\newcommand{\kipr}{k_{\text{IPR}}}
\newcommand{\kbar}{K}
\newcommand{\tjipr}{\tilde{J}_{\text{IPR}}}
\newcommand{\tjserca}{\tilde{J}_{\text{SERCA}}}
\newcommand{\thinf}{\tilde{h}_{\infty}}
\newcommand{\tkipr}{\tilde{k}_{\text{IPR}}}
\newcommand{\tkbar}{\tilde{K}}
\crefname{hypothesis}{Hypothesis}{Hypotheses}
\title{Process-Oriented Geometric Singular Perturbation Theory and Calcium Dynamics}
\author{Samuel Jelbart\thanks{Department of Mathematics, University of Munich, Garching. \email{jelbart@ma.tum.de}, partially supported through the SFB/TRR 109 Discretization and Geometry in Dynamics grant and the Australian Research Council grant DP180103022.}
\and Nathan Pages\thanks{Department of Mathematics, University of Auckland, Auckland. \email{npag780@aucklanduni.ac.nz, v.kirk@auckland.ac.nz, sneyd@math.auckland.ac.nz}, partially supported by grant 15-UOA-184 of the Marsden Fund of the Royal Society of New Zealand.}
\and Vivien Kirk\footnotemark[2]
\and James Sneyd\footnotemark[2]
\and Martin Wechselberger\thanks{School of of Mathematics \& Statistics, University of Sydney, Sydney. \email{martin.wechselberger@sydney.edu.au}, partially supported through Australian Research Council grant DP180103022.}}
\newtheorem{assumption}{Assumption}[section]
\newcommand\wm[1]{{\color{black}{#1}}}
\newcommand\we[1]{{\color{black}{#1}}}
\newcommand\SJ[1]{{\color{black}{#1}}} % Sam
\newcommand\rev[1]{{\color{black}{#1}}} % Sam
\newcommand\revsj[1]{{\color{black}{#1}}} % Sam
\newcommand\com[1]{{\color{black}{#1}}} % Sam
\begin{document}
	
	\maketitle
	
	\begin{abstract} 
		\rev{Phenomena in chemistry, biology and neuroscience are often modelled using ordinary differential equations (ODEs) in which the right-hand-side is comprised of terms which correspond to individual `processes' or `fluxes'. Frequently, these ODEs are characterised by multiple time-scale phenomena due to order of magnitude differences between contributing processes and the presence of \textit{switching}, i.e.,~dominance or sub-dominance of particular terms as a function of state variables. We outline a heuristic procedure for the identification of small parameters in ODE models of this kind, with a particular emphasis on the identification of small parameters relating to switching behaviours. This procedure is outlined informally in generality, and applied in detail to a model for intracellular calcium dynamics characterised by switching and multiple (more than two) time-scale dynamics. A total of five small parameters are identified, and related to a single perturbation parameter by a polynomial scaling law based on order of magnitude comparisons. The resulting singular perturbation problem has a time-scale separation which depends on the region of state space. W}e prove the existence \rev{and uniqueness} of \rev{stable} relaxation oscillations with three distinct time-scales % in a certain parameter regime, 
		using a coordinate-independent formulation of GSPT in combination with the blow-up method. \rev{We also provide an estimate for the period of the oscillations, and consider a number of possibilities for their onset under parameter variation.}
	\end{abstract}
	
	% REQUIRED
	\begin{keywords} intracellular calcium dynamics, non-standard geometric singular perturbation theory, switching, multiple time-scales, relaxation oscillations, blow-up
		%  example, \LaTeX
	\end{keywords}
	
	% REQUIRED
	\begin{AMS}
		% 68Q25, 68R10, 68U05
		34C15, 34C26, 34E15, 37N25, 92C37
	\end{AMS}

	\section{Introduction} % (fold)
	\label{sec:introduction}
	
%	{\color{red}{VK: Need to make a global decision about whether to refer to `state space' or `phase space'. Both are used and appear to be intrchangeable. I'd prefer phase space, but don't care much.}}
	
	\rev{
	Chemical, biological and physiological phenomena are often modelled by systems of ordinary differential equations (ODEs) which contain multiple time-scales. This is a natural consequence of the fact that the underlying chemical, biological or physiological processes relevant to the dynamics naturally operate on time-scales which can span several orders of magnitude. Singular perturbation theory provides a mathematical toolbox for the analysis of multi-scale problems of this kind. In particular, many authors have demonstrated the suitability of \textit{geometric singular perturbation theory (GSPT)} \cite{fenichel1979geometric,Kuehn2015,Jones1995,wechselberger2018geometric} in combination with a method of geometric desingularization known as \textit{blow-up} \cite{Dumortier1996,Krupa2001a,Krupa2001b} as a rigorous and geometrically informative framework for the analysis of such problems; see \cite{Gucwa2009,kosiuk2011scaling,milik2001} for examples in mathematical chemistry, \cite{deMaesschalck2015,Rubin2002,Mitry2013,Roberts2015,Vo2013} for examples in mathematical neuroscience, and \cite{kosiuk2016geometric,Kristiansen2019d,KuehnSzmolyan2015} for examples in the context of biological and physiological systems.
	
	Despite the success of these analyses, however, for many multi-scale ODEs arising in applications there are still a number of significant modelling and analytical obstacles to be overcome. For example, a necessary pre-condition for the formulation of an ODE as a perturbation problem, and hence for an analysis via GSPT, is the identification of a suitable perturbation parameter $\epsilon \ll 1$. In many systems of interest, there is no explicit perturbation parameter $\epsilon$, and the modeller is confronted with the problem of how to introduce an $\epsilon$ without compromising the validity of the model. The opposite problem is also common, i.e.,~many applications feature multiple candidates for small parameters. In this case, the modeller is often forced to choose which (if any) of the candidate small parameters should be considered as \we{`relevant'} perturbation parameter\we{(s)}. In the case of more than one perturbation parameters, the ordering or relation between these `$\epsilon$'s' becomes significant for the validity and tractability of the resulting perturbation problem. Furthermore, models of this type are often characterised by additional analytical problems stemming from the presence of many time-scales \cite{kosiuk2011scaling,Kruff2019}, non-trivial time-scale separations depending on the region of state and/or parameter space \cite{wechselberger2018geometric,Lizarraga2020,kosiuk2016geometric,Gucwa2009,Kruff2019,Goeke2014}, and `switching', i.e.,~convergence to a non-smooth singular limit as a perturbation parameter tends to zero \cite{kosiuk2016geometric,Kristiansen2019d,Plahte2005,Ironi2011,Machina2013,Glass2018}. 
	
	In recent years, significant progress towards overcoming these analytical obstacles has been made by many authors. In \cite{wechselberger2018geometric}, a coordinate-independent framework for GSPT which is applicable for systems with time-scale separations depending on the region of state space is outlined in detail. Such a framework has also been developed and applied successfully to models for chemical reactions in, for example,~\cite{Goeke2012,Goeke2014}, and previous work by Szmolyan et al \cite{Gucwa2009,kosiuk2016geometric,Kristiansen2019d} demonstrates that GSPT and blow-up techniques are well suited to analyses of problems of this kind. In \cite{kosiuk2011scaling} the authors successfully analysed a model for glycolytic oscillations featuring multiple small parameters and more than two time-scales using GSPT and blow-up techniques. In \cite{Cardin2017} the GSPT framework was extended to account for any number of small parameters and time-scales, similar results in the more general case with no assumed separation of fast/slow variables have been developed and applied in \cite{Kruff2019}, and in \cite{Lizarraga2020b} progress is made towards the identification and analysis of so-called `hidden time-scales' in such systems. In \cite{kosiuk2016geometric} the authors demonstrate that GSPT and blow-up techniques can be adapted to study problems characterised by `switching', a common feature of ODE models in mathematical biology due, for example, to the rapid activation or inactivation of certain processes once a threshold agonist or chemical concentration is reached. 
	Finally, the authors in \cite{Kristiansen2019d} were able to show that blow-up techniques developed originally for the study of regularised (smoothed) piecewise-smooth systems \cite{Buzzi2006,Llibre2007,Llibre2009,Kristiansen2020,kristiansen2018a} can be used in order analyse these systems using singular perturbation theory.
	
	\

	%%%%%%%%%%%%%%%%%%%%%%%%%%%%%%%%%%%%%%%%%%%%%%%%%%
	\begin{figure}[t!]
		\centering
		\begin{subfigure}[b]{0.48\linewidth}
			\includegraphics[width=\linewidth]{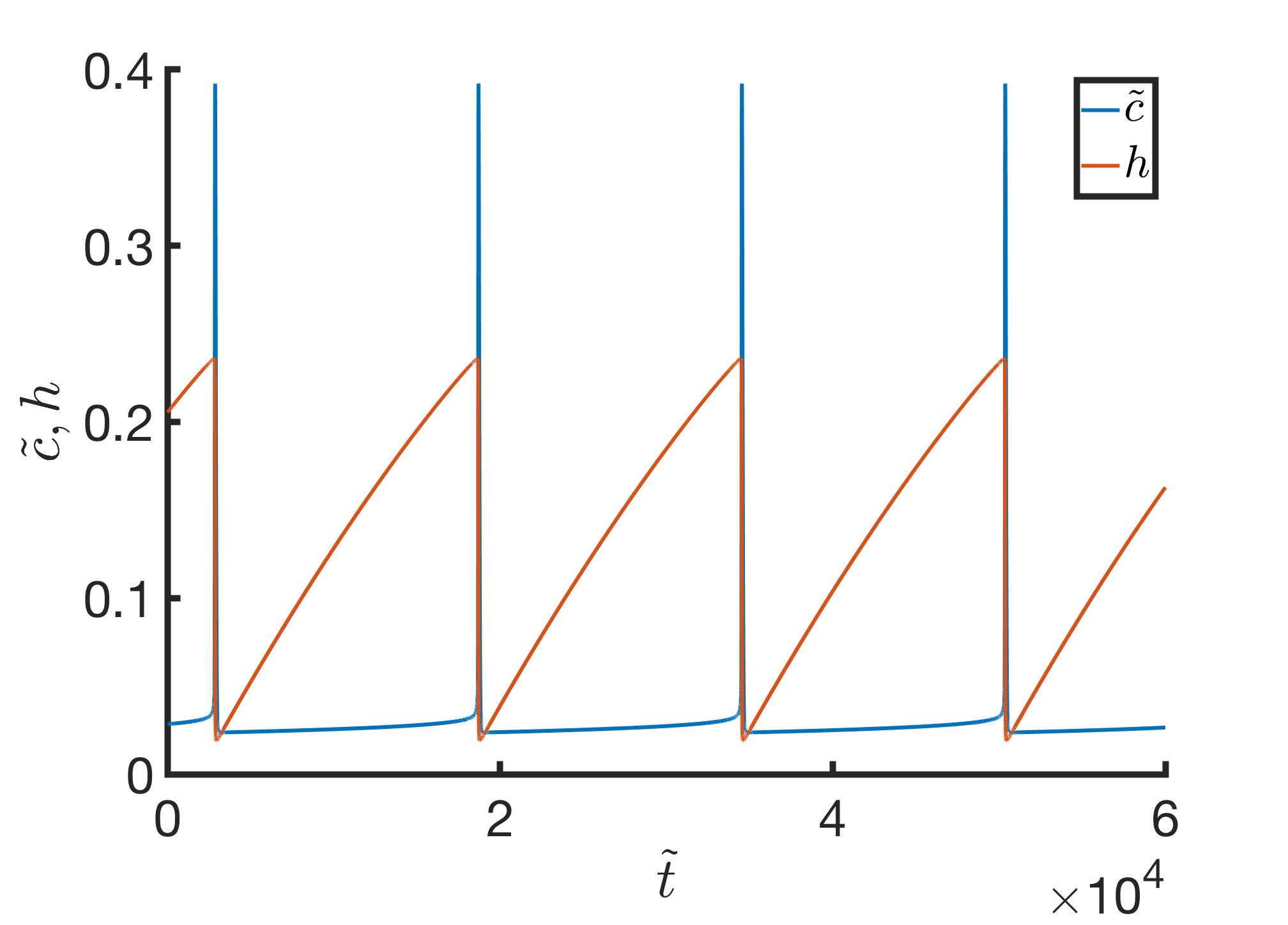}
		\end{subfigure}
		\begin{subfigure}[b]{0.48\linewidth}
			\includegraphics[width=\linewidth]{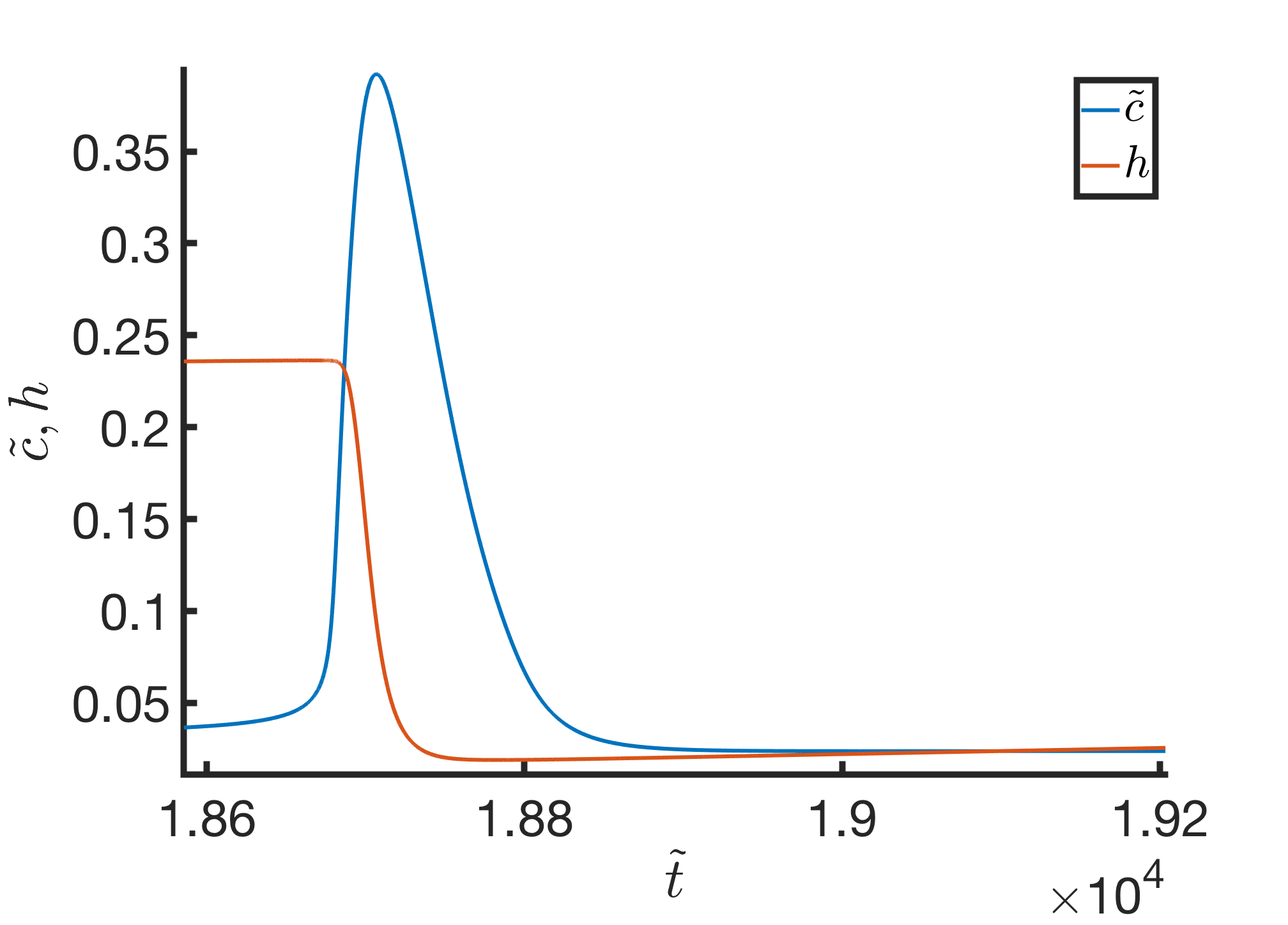}
		\end{subfigure}
		\caption{\rev{Relaxation oscillations in a model for intracellular calcium dynamics defined in Section \ref{sec:motivations_for_the_closed_cell_model}, system \eqref{eq:Non_dim}. Left: time series for the two state variables $\tilde c$ and $h$. Right: enlargement of one spike. Both $\tilde c$ and $h$ exhibit multiple time-scale dynamics, indicative of a non-standard time-scale separation.}}
		\label{fig:non_dim}
	\end{figure}
	%%%%%%%%%%%%%%%%%%%%%%%%%%%%%%%%%%%%%%%%%%%%%%%%%%%

	Our aim in this work is to develop a heuristic procedure for the derivation and analysis of such perturbation problems, focusing in particular on models without \we{{\em a priori} knowledge of relevant} perturbation parameters. \rev{In light of the significant progress toward overcoming the analytical problems associated with singular perturbation analysis outlined in the preceding paragraph, emphasis is placed here on the identification of small parameters and reformulation of a model as a singular perturbation problem.} This procedure is outlined in generality, but developed primarily via application to a model of intracellular calcium oscillations %, \revsj{obtained as 2-dimensional reduction of a higher dimensional model in} 
	\cite{sneyd2017dynamical}. Although the model is only two-dimensional, it features many of the obstacles discussed above and therefore serves as a suitable case study for the development of our methods. \rev{Figure \ref{fig:non_dim} shows} numerical evidence of relaxation oscillations with two or more time-scales in the model, with a time-scale separation which depends on the region of state space. This is evident by the presence of fast and slow components in both state variables. \rev{The model also exhibits} switching, i.e., sharp dynamical transitions caused by the fact that some of the model fluxes change quickly, in a threshold-like fashion, as the calcium concentration changes. \rev{Applying our heuristic procedure to the model, we identify multiple small parameters, which are then linked to a single small parameter via a suitable polynomial scaling based on order of magnitude comparisons. We then prove existence, uniqueness and stability of the observed relaxation oscillations in the singular perturbation problem derived via our methods, provide an estimate for the period of the oscillations, and consider a number of mechanisms for their onset under parameter variation.}

	}
	
	\
	
	The manuscript is structured as follows. \rev{In Section \ref{sec:the_procedure} we outline the general procedure for deriving a suitable singular perturbation problem.} In Section \ref{sec:motivations_for_the_closed_cell_model} we present the model from \cite{sneyd2017dynamical} and its reduction to the \rev{closed-cell} form we will analyze. In Section \rev{\ref{sec:steps_I_IV} we apply our procedure to the \we{(closed-cell)} calcium model, and derive the corresponding singular perturbation problem.} Section \ref{sec:slow-fast_analysis_and_statement_of_the_main_result} is devoted to a GSPT analysis of this system, including the statement of our main results. The detailed blow-up analysis required to prove existence %and uniqueness 
	of the relaxation oscillations, \rev{as well as the bifurcation analysis of the model,} is deferred to the Appendix \rev{for expository reasons}. %\SJ{In Section \ref{sec:effect_of_tau_} we consider possibilities for the onset of the relaxation oscillations under parameter variation. %\wm{[update/change:] the effect of varying a parameter which controls} the frequency of oscillations, and identifies a mechanism for their onset. 
	In Section \ref{sec:discussion_conclusion} we summarise and conclude.

	\section{\rev{Outline of the method}}
	\label{sec:the_procedure}
	
%	\rev{Our aim is to derive a perturbation problem describing the dynamics of the model \eqref{eq:Full_model}. This is non-trivial task due the absence of explicit small parameters. Moreover, due to switch-like terms such as $\tau_h(c)$ (see again Figure \ref{fig:tau_h_fig}), the multiple-scales behaviour is expected to depend in non-trivially on the region of state space. We propose to account for both issues by via a `semi-systematic' approach which we believe to be applicable and adaptable for a large class of low dimensional ODE models in which the right-hand-side is comprised of linear combinations of identifiable process/flux terms as is common in e.g.~mathematical biology and neuroscience. The approach is sketched in the following:
		
%		\smallskip
		
		\rev{In the following we outline our heuristic procedure for deriving singular perturbation problems from ODE models characterised by multi-scale and/or switching behaviour.}
			%\revsj{Here is it worthy to remark that although there is nothing to prevent one from applying the procedure to ODE models in general, it is developed and intended for application to multi-scale problems. For many problems, multi-scale and/or switching dynamics are `obvious' and readily observed in simulations. In Section \ref{sec:motivations_for_the_closed_cell_model} we shall see that this is true for the calcium model which we consider in detail. Such problems are natural candidates for the application of our approach. In cases where for which multi-scale structure or switching behaviour is less obvious, we believe our approach to be helpful for identifying `hidden' multi-scale or switching structure, if there is any to be found.}}
		
		\

		\rev{
			%The method is as follows. 
			Given a model expressed as a system of ODEs comprised of identifiable process/flux terms, perform the following steps:
		
		\smallskip 
		
		\begin{enumerate}
			\item[(I)] \textit{\we{Non-dimensionalise}.}
			%\item[(I)] \textit{Non-dimensionalise and simulate.} %VK: Changed the name of the step to match that used in Section 4
			This is crucial for revealing the relative magnitudes and time-scales associated with particular process/flux terms. %After non-dimensionalisation. % look for signs of multiple scale and/or switching dynamics numerically. % \com{Maybe we should make this step ``Non-dimensionalise and simulate"?}
			\item[\we{(IIa)}] \textit{Associate small parameters to \we{maximal process/flux rates.}} Following step (I), each process/flux term can be written as the product of a constant scaling factor and a normalised process/flux term. % Do so and compare the relative magnitudes of the constant scaling factors. % associated with each process/flux term. %Due to step (I), the largest scaling factor (and thus time-scale) should be of numerical order $1$. 
			Numerically `small' scaling factors constitute candidates for small parameters $\epsilon_i \ll 1$.
			\item[\we{(IIb)}] \textit{Associate small parameters to \we{steep switches in process/flux terms.}} Identify sources of \we{`steep'} switching and associated candidate small parameters $\epsilon_i \ll 1$ such that %. For each such $\epsilon_i$, 
			the limit $\epsilon_i \to 0$ yields an approximation by a non-smooth switch.
			% Identify regions of state space correlated to distinct dynamical behaviours, and compare relative magnitudes of the normalised process/flux terms in these regions. 
		%	The decision as to whether a switch is `sufficiently sharp' for the validity of non-smooth approximations may be informed by a combination of analytical and numerical arguments. % based on the slope of the switch, or numerical observations based on e.g.~heat maps for each normalised process/flux term. 
			%The presence of sharp/abrupt switching behaviour in (III) motivates the identification of candidate small parameters $\epsilon_i \ll 1$, such that the limit $\epsilon_i \to 0$ yields an approximation by a non-smooth switch.
			%	\item[(IV)] Associate candidate small parameters to `sufficiently sharp' switches. The presence of sharp/abrupt switching behaviour in (III) motivates the identification of candidate small parameters $\epsilon_i \ll 1$, such that the limit $\epsilon_i \to 0$ yields an approximation by a non-smooth switch.
			\item[\we{(III)}] \textit{Relate small parameters.} Use numerical order of magnitude comparisons to relate candidate small parameters identified in \we{(IIa) and/or (IIb)}. Choose a polynomial scaling with respect to a single small parameter $\epsilon$ as follows: for $i = 1, \ldots , m$ small parameters $\epsilon_i$ we have
			\begin{equation}
				\label{eq:scaling_polynomial}
				\epsilon = a_1 \epsilon_1^{b_1} = a_2 \epsilon_2^{b_2} = \ldots = a_m \epsilon_m^{b_m} ,
			\end{equation}
			where $\epsilon_i$ are the candidate small parameters from \we{(IIa) and (IIb)}, the $a_i$ are of numerical order $1$ and the exponents $b_i > 0$. There should exist a fixed numerical value of \we{$\epsilon\ll 1$} such that the original (dimensionless) parameter values, i.e.,~the actual parameter values prior to the scaling \eqref{eq:scaling_polynomial}, are returned by setting $\epsilon$ equal to this number.
			%	\item[(V)] Test for loss of smoothness. The existence of candidate small parameters linked to switching behaviour implies a loss of smoothness in the (joint) limit $\epsilon \to 0$. Determine the order at which this loss of smoothness occurs. %This will play a role in determining which analytical methods are available for analysis. % occurs is important for analytical purposes due to small parameters identified in (III)-(IV), the system will lose smoothness at a certain order. 
			%	If the system remains at least $C^{r \geq 1}$ as $\epsilon \to 0$, it can be analysed using perturbation theory, GSPT and blow-up techniques as necessary. If the system is singular in the sense that it loses $C^1$ or even $C^0$ smoothness as $\epsilon \to 0$, the analysis can be supplemented by additional tools from piecewise-smooth theory, in combination with adaptations of the blow-up method for singular perturbations of non-smooth systems.
			\item[\we{(IV)}] \textit{Analyse the system via perturbation theory.} If successful, steps \we{(I)-(III)} yield a well-defined system which can be analysed via the limiting (potentially non-smooth) dynamics.
			% Note that step (V) is important in determining which analytical methods are available for analysis. %If successful, steps (I)-(VI) yield a well-defined system which can be analysed via the limiting (potentially non-smooth) dynamics. Typically, limiting dynamics in distinct regions of state space can be analysed with established tools from (nonstandard) GSPT and PWS theory. Often these regions will be distinguished by different $\epsilon-$rescaled coordinates. The connection/overlap between these regions can often be studied in a rigorous way using blow-up techniques.
		\end{enumerate}
		
		\smallskip
		
		Steps \we{(I)-(III)} provide a semi-systematic \we{pre-processing of any dimensional (ODE) model for deriving a candidate perturbation problem}, and will be carried out in detail for a model for intracellular calcium dynamics in Section \ref{sec:steps_I_IV}. Step \we{(IV)}, the analysis of the resulting perturbation problem, is presented for this case study in Section \ref{sec:slow-fast_analysis_and_statement_of_the_main_result}. % and the Appendices. 
	
		\we{In the following we provide several remarks on  the general procedure outlined above.}
		%We close this section with a number of remarks relating to the general procedure outlined above.	
		\begin{remark}
			\rev{The procedure is developed and intended for application to ODE models exhibiting multi-scale and/or switching dynamics, and is expected to be of most use in this particular setting. In the case that a model has no clear multi-scale or switching behaviour, the procedure should output either a `null result' in which no perturbation parameters are identified, or a regular perturbation problem, in which only small parameters that are not related to multi-scale structure or switching are identified. In the latter case the procedure still provides a simplification, even in the absence of multi-scale dynamics. \we{In case of a singular perturbation problem, small parameters are identified in (IIa) only, in (IIb) only or in (IIa) and (IIb).}}
		\end{remark}
		
		\begin{remark}
			\label{rem:sufficiently_small}
			As always in applied settings, what constitutes a `sufficiently small' numerical value for a parameter to be considered as a perturbation parameter in step \we{(IIa)} is debatable, and left to the discretion of the modeller. Typically, assertions that a particular value is `sufficiently small' for the application of perturbation methods need to be justified in terms of agreements with numerical and/or experimental findings.
		\end{remark}
		
		\begin{remark}
			\label{rem:sufficiently_sharp}
			Similarly to the issues described in Remark \ref{rem:sufficiently_small} above, the question of whether or not a switch is `sufficiently \we{steep}' to justify approximation by a non-smooth switch in step \we{(IIb)} is left to the discretion of the modeller. A quantitative comparison can be seen by noting that in this case, the slope of the normalised switch should be `sufficiently large' for the validity of such approximations. Such approximations should be motivated on numerical, experimental or analytical grounds (or a combination thereof), and subsequently justified in terms of agreement with numerical and/or experimental findings.
		\end{remark}
		
		%\begin{remark}
		%	In some cases the small parameter needed to formulate an approximate by a non-smooth switch will be given, e.g.~Hill functions. In others, it must be introduced. \com{Explain}
		%\end{remark}

		\begin{remark}
			There are many ways in which one could choose to order or relate the identified small parameters. The choice to do so in a polynomial fashion as in step (IV) equation \eqref{eq:scaling_polynomial} is made for simplicity, and in some sense defines our method. It is also important to keep in mind for a given application whether some limits need to remain independent.
		\end{remark}
		
		\begin{remark}
			A singular perturbation problem with small parameters identified in step (IIa) may or may not be given in the so-called `standard form', i.e~it may or may not feature a separation of fast/slow variables. 
		%
		%	The presence of switching behaviour means that the resulting perturbation problem is likely to be singularly perturbed with a multiple-scales structure which can vary throughout the state space. Consequently, the resulting singular perturbation problem will typically \textit{not} be given in a standard form with a %arise in a \textit{non-standard form} which does not admit a 
		%	global separation into fast/slow variables \cite{wechselberger2018geometric}.
		%	
			The identification of one or more perturbation parameters associated to switching in step \we{(IIb)}, implies that the resulting perturbation problem loses smoothness at some order in the singular limit $\epsilon \to 0$. In this case, the system is generically `non-standard', in the sense that it cannot be written in the standard form.
		%	\we{[Comment for Sam: I do not agree with that remark (or I misread it) since it merges two different observations. (i) non-standard and (ii) non-smooth, which are not equivalent ((ii)$\to$(i), but not (i)$\to$(ii)). I assume we mean by  `switch' what is described in (IIb), e.g. a singular non-smooth limit. Case (i) can have a nice smooth rhs and still be non-standard. So, what do you exactly mean by `presence of switching behaviour'?]}
		\end{remark}
	}

	%\section{Physiological background and the model} % (fold)
	\section{\rev{The model}}
	\label{sec:motivations_for_the_closed_cell_model}
	
	\rev{In this section we introduce the model for intracellular calcium dynamics which will serve as a non-trivial case study for our approach.}	
	%Let $c$ denote the calcium concentration in the cytosol, $c_e$ the calcium concentration in the ER, and $h$ a gating variable which represents the capacity of the \ip{} receptors to allow the passage of calcium.
	We consider the following model developed in \cite{sneyd2017dynamical}:
	\begin{equation}
		\begin{split}
			\displaystyle\tau_h(c)\frac{\text{d}h}{\text{d}t}&=\displaystyle\hinf (c) - h,\\
			\displaystyle\frac{\text{d}c}{\text{d}t}&=\displaystyle\jipr \rev{(h,c,c_e)}-\jserca \rev{(c,c_e)} + J_{\rm pm} \rev{(c)},\\
			\displaystyle\frac{\text{d}c_e}{\text{d}t}&=\displaystyle\gamma(\jserca \rev{(c,c_e)}-\jipr \rev{(h,c,c_e)}) ,
			%switched sign of RHS of c_e equation
			\label{eq:Full_model_open}
		\end{split}
	\end{equation}
	\rev{where $c$ denotes the calcium concentration in the cytosol, $c_e$ the calcium concentration in the endoplasmic reticulum (ER), and $h$ is a gating variable describing the capacity of the \ip{} receptors (IPR) to allow the passage of calcium. The particular form of the fluxes $\jipr(h,c,c_e)$, $\jserca(c,c_e)$, $J_{\rm pm}(c)$ and the functions $\hinf(c)$, $\tau_h(c)$ will be specified below.} The parameter $\gamma$ is the ratio between the volume of the cytosol and the volume of the ER. Default parameter values used in the model are specified in Table \ref{tab:params}. 
	
	%%%%%%%%%%%%%%%%%%%%%%%%%%%%%%%%%%%%%
	\begin{table}[t!]
		\center
		\begin{tabular}{|c|l|l||c|l|l|}
			\hline
			Parameter    & Value & Units    & Parameter & Value      & Units           \\
			\hline
			$k_\beta$    & 0.4   & -        & $p$       & 0.05       & $\mu$M          \\
			$K_c$        & 0.2   & $\mu$M   & $K_\tau$  & 0.1        & $\mu$M          \\
			$K_h$        & 0.08  & $\mu$M   & $V_s$     & 0.9        & $\mu$Ms$^{-1}$ \\
			$K_p$        & 0.2   & $\mu$M   & $\kbar$   & 0.00001957 & -               \\
			$\kipr$      & 10    & $s^{-1}$ & $K_s$     & 0.2        & $\mu$M          \\
			$\tau_{\rm max}$ & 1000   & s        & $\gamma$  & 5.5        & -               \\
			$c_t$        & 2     & $\mu$M   & -         & -          & -               \\
			\hline
		\end{tabular}
		\caption{Default (dimensional) parameter values for the model, equations (\ref{eq:Full_model}).}
		\label{tab:params}
	\end{table}
	%%%%%%%%%%%%%%%%%%%%%%%%%%%%%%%%%%
	
	It is useful to define $c_t := c + c_e / \gamma$, which represents the total moles of free calcium in the ER and the cytosol, divided by the cytoplasmic volume. This variable can be used as an alternative to $c_e$ in the model, yielding
	\begin{equation}
		\begin{split}
			\displaystyle\tau_h(c)\frac{\text{d}h}{\text{d}t}&=\displaystyle \hinf (c) - h,\\
			\displaystyle\frac{\text{d}c}{\text{d}t}&=\displaystyle\jipr \rev{(h,c,c_t)}-\jserca \rev{(c,c_t)} + J_{\rm pm} \rev{(c)},\\
			\displaystyle\frac{\text{d}c_t}{\text{d}t}&= J_{\rm pm} \rev{(c)} ,
			\label{eq:Full_model_open_ct}
		\end{split}
	\end{equation}
	\rev{where by replacing the argument $c_e$ by $c_t$ in the flux terms $\jipr\rev{(h,c,c_t)}$ and $\jserca\rev{(c,c_t)}$ we have permitted a slight abuse of notation. The} flux \rev{term $J_{\text{pm}}(c)$} represents the exchange of calcium between the cytosol and the extracellular medium. % \SJ{[Does this sentence change if we change Figure \ref{fig:Calcium_fluxes_diagram}?]}. VK: Think it is okay with either sign of J_pm
	This calcium \we{eflux} is typically slow compared to the rate of other calcium fluxes in the cell and is not necessary on either physiological \cite{bruce2002phosphorylation} or mathematical \cite{pages2019model} grounds in order for there to be oscillations. \we{Blocking the $J_{\text{pm}}(c)$ flux term obtained by setting $J_{\text{pm}}(c) \equiv 0$ gives the corresponding {\em closed-cell model}. The study of this reduced model} 
	% For example, there are cell types which exhibit calcium oscillations in the absence of any calcium flux across the plasma membrane, while there are other cell types for which calcium oscillations are entirely dependent on calcium entry. Some cell types can exhibit both kinds of behaviour, depending on the exact agonist used, and the concentration of the agonist. Thus, as a general rule, it is important to understand the behaviour of models both with and without calcium transport across the plasma membrane. However, 
	%\rev{In} cases where \rev{$J_{\text{pm}}(c)$} is very small relative to the other fluxes, the study of the closed-cell model \rev{obtained by setting $J_{\text{pm}}(c) \equiv 0$} 
	gives enormous insight into the behaviour of the full model, and thus a closed-cell analysis is an important first step for the analysis of almost any model of intracellular calcium. 
%\we{[comment: it's weird that we use a heuristic argument here, while later we go into an in-depth justification].}	
	\rev{The closed-cell reduction leads to} a planar system:
	\begin{align}
		\begin{split}
			\displaystyle\tau_h(c)\frac{\text{d}h}{\text{d}t}&=\displaystyle\hinf (c) - h,\\
			\displaystyle\frac{\text{d}c}{\text{d}t}&=\displaystyle\jipr \rev{(h,c)}-\jserca \rev{(c)} ,
			\label{eq:Full_model}
		\end{split}
	\end{align}
	\rev{where the total calcium $c_t$ is now a parameter. In this work, we focus on understanding the dynamics of the closed-cell model \eqref{eq:Full_model}.} % Note, that understanding the limiting dynamics in the closed-cell system with $J_{\text{pm}} = 0$ is also a necessary first step towards a complete perturbation analysis of the open-cell model \eqref{eq:Full_model_open_ct} in the (also biologically relevant) case $0 < |J_{pm}| \ll 1$.}
	
	\rev{We now specify the functional form of $\tau_h(c)$ and the flux terms. The functions $\tau_h(c)$ and $h_\infty(c)$ are given by Hill functions
	\begin{equation}\label{tau-h-orig}
		\tau_h (c) = \tau_{\rm max} \frac{K_\tau^4}{K_\tau^4+ c^4} , \qquad 
		\hinf(c) = \frac{K_h^4}{K_h^4+ c^4} ,
	\end{equation}
	which are monotonically decreasing switch-like functions; see Figure~\ref{fig:tau_h_fig}. Note that due to the constant factor $\tau_{\rm max}$, the value of $\tau_h(c)$ in particular varies between $0$ s and $1000$s depending on $c$. This induces a sharp, switch-like variation in the speed of evolution of $h$ as a function of calcium concentration $c$.
	}

	\begin{figure}
		\centering
		\includegraphics[scale=0.55]{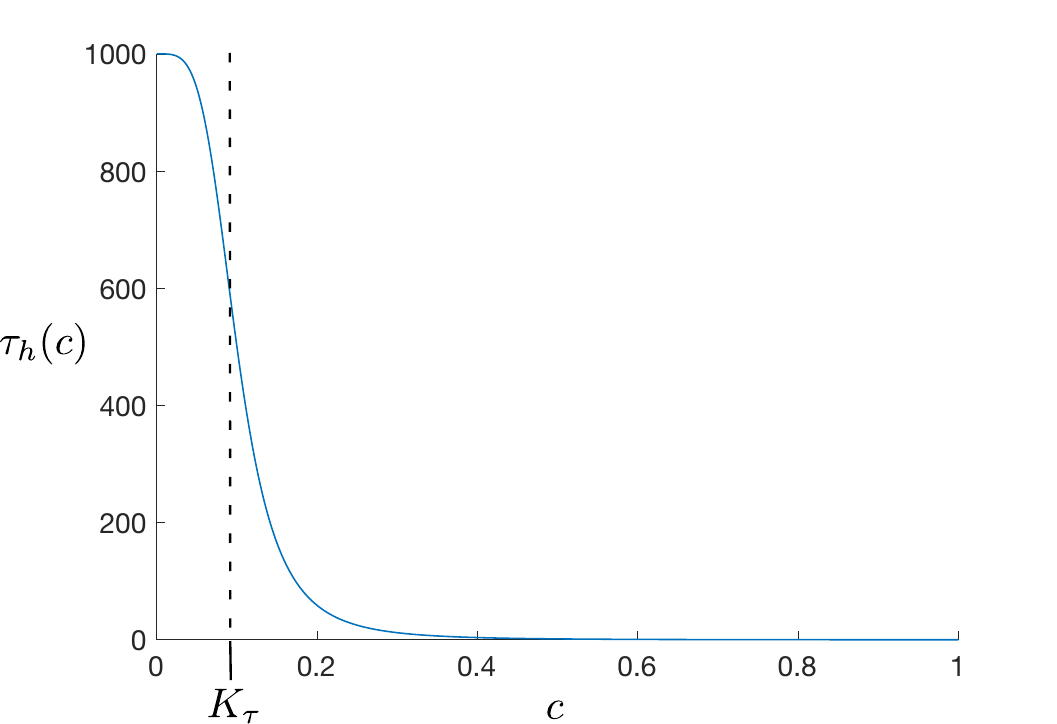}
		\caption{
			The function $\tau_h(c)$ defined in \eqref{tau-h-orig} varies over several orders of magnitude with $c$, which causes the speed of evolution of $h$ to vary significantly through state space.}
		\label{fig:tau_h_fig}
	\end{figure}

	$\jipr$ represents the flux of calcium from the ER to the cytosol through the IPR, and is modelled by 
	\begin{equation}\label{def:PO}
		\jipr\rev{(h,c)} = % \kipr P_O (c_e -c) = 
		\gamma \kipr P_O \rev{(h,c)} \left(c_t - (1 + 1/\gamma) c \right)\,,
	\end{equation}
	where $\kipr$ is the maximum flux through the IPR and $P_O$ is the open probability of the IPR. There are numerous models for $P_O$ in the literature \cite{dupont2016models,sneyd2005models,politi2006models,atri1993single}. \rev{Here we adopt the form proposed in \cite{sneyd2017dynamical}, which is}
	one of the simplest and most recent. \rev{We set}
	\begin{align}
		\label{eq:P_O}
		P_O\rev{(h,c)} = \frac{\beta \rev{(h,c)}}{\beta\rev{(h,c)} + k_\beta \left( \beta\rev{(h,c)} + \alpha\rev{(c)}\right)},
	\end{align}
	where
	\begin{align*}
		\alpha \rev{(c)} = \phi_{pdown}(1 - m_\alpha \rev{(c)} \hinf \rev{(c)}), \qquad
		\beta \rev{(h,c)}  = \phi_p m_\alpha \rev{(c)} h,
	\end{align*}
	with \rev{Hill-type functions}
	\begin{align*}
		%	\begin{split}
		m_\alpha \rev{(c)} = \frac{c^4}{K_c^4 + c^4},  \qquad
%		\hinf        = \frac{K_h^4}{K_h^4+ c^4}, \qquad
		%	\end{split}
		%	\begin{split}
		\phi_{pdown} = \frac{K_p^2}{K_p^2+ p^2}, \qquad
		\phi_p       = \frac{p^2}{K_p^2+p^2}.
		%	\end{split}
	\end{align*}

	\rev{Finally,} $\jserca$ is the flux from the cytosol to the ER via the ATPase pumps. \rev{The} SERCA pump model is bidirectional, and given by
	\rev{
	\begin{equation}
		\label{eq:Jserca}
		\jserca(c) = V_s \frac{c^2-\kbar \gamma^2 (c_t - c)^2}{K_s^2+c^2} = \jserca^+(c) - \jserca^-(c),
	\end{equation}
	where
	\[
		\jserca^+(c) = V_s \left( \frac{{c}^2}{{K}_s^2+c^2} \right),  \qquad
		\jserca^-(c) = K \gamma^2 {V}_s \left( \frac{({c}_t - {c})^2}{{K}_s^2+{c}^2} \right).
	\]
	The decomposition into two separate terms $\jserca^\pm$ reflects the fact that $\jserca$ models two separate fluxes: one positive, resulting from calcium ions being pumped from the cytosol to the ER, and one negative, corresponding to a leak of calcium ions from the ER into the cytosol.}
	%\we{[comment for James, I guess: why call it SERCA, when it is a leak; or is it a leaky pump!?]}
	%It will often be helpful to distinguish these two processes by writing
	%	\begin{equation}
	%		\jserca(c) = \jserca^+(c) - \jserca^-(c),
	%	\end{equation}
	%	where
	%	\begin{equation}
	%	
	%		\jserca^+(c) = V_s \left( \frac{{c}^2}{{K}_s^2+c^2} \right),  \qquad
	%		\jserca^-(c) = K \gamma^2 {V}_s \left( \frac{({c}_t - {c})^2}{{K}_s^2+{c}^2} \right).
	%\end{equation}}
	
	\
	
	\rev{The model \eqref{eq:Full_model} has parameters spanning multiple orders of magnitude (see Table \ref{tab:params}), as well as multiple Hill-type functions which induce a nonlinear switching behaviour as a function of calcium concentration $c$. Both features can potentially lead to a (possibly hidden) multiple time-scale structure. Our aim in the next section is to reveal any multiple time-scale structure by applying the procedure outlined in Section \ref{sec:the_procedure}.}

%	\revsj{Relaxation oscillations can be observed by numerically integrating the system \eqref{eq:Full_model} with the parameter values in Table \ref{tab:params}. The oscillations are represented in $(h,c)-$space in Figure \ref{fig:} \com{\textbf{TODO}}, and a corresponding time trace is shown in \ref{fig:} \com{\textbf{TODO}}. Notice that both time series contain components evolving on different time-scales, which is indicative of a non-standard time-scale separation. The multiple time-scale structure is also reflected in state space, insofar as distinct qualitative behaviours can be observed in three different regions: along the $h-$axis, along the $c-$axis, and away from both $h, c -$axes. %In the following Our aim in the following is to reveal the multiple time-scale structure in \eqref{eq:Full_model} by applying the procedure outlined in Section \ref{sec:the_procedure} to the model \eqref{eq:Full_model}.} \com{\textbf{Should we also include the heat maps here?}}

	\rev{
		\section{Deriving a singular perturbation problem}
		\label{sec:steps_I_IV}

		In this section, we apply steps \we{(I)-(III)} to the calcium model \eqref{eq:Full_model} in order to derive a system amenable for an analysis using perturbation methods.

		%\subsection*{Step (I): Non-dimensionalise and simulate}
		\subsection*{Step (I): \we{Non-dimensionalise}}
		\label{sub:non_dimensionalisation}
		
		We begin by non-dimensionalising} the model \eqref{eq:Full_model}. {\color{black} As well as making the variables unitless, this process also rescales the variables to ensure they are of order one for the regions of state and parameter space of interest.}
	%To enable a reliable identification of the different time-scales in equations \eqref{eq:Full_model}, we non-dimensionalise and rescale the model. 
	Since $h\in [0,1]$ is already dimensionless,
	%so it does not need to be non-dimensionalised and its value is bounded by 0 and 1, so it does not need to be scaled. 
	we only need to non-dimensionalise the variables $c$ and $t$. We define
	%follow a similar procedure to that in \cite{harvey2011multiple} and introduce the dimensionless variable $\tilde{c}$ and dimensionless time-scale $\tilde{t}$,
	\begin{equation}
		c = Q_c \tilde{c}, \qquad
		t = T \tilde{t} ,
	\end{equation}
	where $Q_c$ and $T$ denote a reference concentration and time-scale respectively, to be specified below. 
		\begin{table}[t!]
		\center
		\begin{tabular}{|c|l||c|l|}
			\hline
			Parameter         & Value & Parameter            & Value      \\
			\hline
			$\tilde{k}_\beta$ & 0.4   & $\tilde{K}_\tau$     & 0.05       \\
			$\tilde{K}_c$     & 0.1 & $\tilde{V}_s$        & 0.0081     \\
			$\tilde{K}_s$     & 0.1 & $\tkbar$             & 0.000019 \\
			$\tilde{K}_p$     & 0.1   & $\tilde{K}_h$        & 0.04      \\
			$\tkipr$          & 0.18     & $\gamma$             & 5.5        \\
			$\tilde{p}$       & 0.025  & $\tilde{\tau}_{\rm max}$ & 55000       \\
			$\tilde{c}_t$     & 1  & -                    & -          \\
			\hline
		\end{tabular}
		\caption{Dimensionless parameters in system \eqref{eq:Non_dim}. The tilde notation is dropped in \rev{step (IIa)}.}
		\label{tab:params_non_dim}
	\end{table}
	%
	%so that $\tilde{c}$ and $h$ are of similar magnitude to each other. Similarly, $T$ will be chosen below to be a typical time-scale for the dynamics. 

	We obtain the following dimensionless model:
	\begin{equation}
		\label{eq:Non_dim}
		\begin{split}
			\tilde{\tau}_h(\tilde{c}) \displaystyle\frac{dh}{d\tilde{t}} &= \displaystyle \thinf(\tilde c) - h ,\\
			\displaystyle \frac{d\tilde{c}}{d\tilde{t}} &= \displaystyle \tjipr(h, \tilde c) - \tjserca(\tilde c),\\
		\end{split}
	\end{equation}
	where the new (dimensionless) expressions $\tilde \tau_h(\tilde c), \tilde h_\infty(\tilde c), \tjipr(h,\tilde c)$ and $\tjserca(\tilde c)$ are defined analogously to their dimensional counterparts, except in terms of the new (dimensionless) parameters
	\begin{equation}
		\label{eq:nondim_pars}
		\tilde{K}_i = \frac{K_i}{Q_c}, \ \ \tilde \tau_{\rm max} = \frac{\tau_{\rm max}}{T} , \ \
		\tilde{k}_\text{\text{IPR}} = T \kipr, \ \ \tilde{V}_s = \frac{T}{Q_c}V_s, \ \
		\tilde{c}_t = \frac{c_t}{Q_c}, \ \ \tilde p = \frac{p}{Q_c} ,
	\end{equation}
	where $i = c, h, \tau, s$ or $p$.
	We choose the following reference scales for our model:
	\begin{equation}
		Q_c = c_t = 2\,, 
		\qquad T = \frac{1}{\gamma \kipr} = \frac{1}{55}\,.
	\end{equation}
	Thus, \eqref{eq:Non_dim} shows the dynamics of $c$ relative to the total concentration $c_t$, on the IPR time-scale set by the uniform scaling $\gamma \kipr$; this is the fastest time-scale in the system. Numerical values for the dimensionless model parameters consistent with Table \ref{tab:params} are shown in Table \ref{tab:params_non_dim}. \rev{The relaxation oscillations in Figure \ref{fig:non_dim} were computed using system \eqref{eq:Non_dim} with the (dimensionless) parameters in Table \ref{tab:params_non_dim}. The multiple time-scale structure is also reflected in state space, see Figure \ref{fig:phase_space}. \revsj{The presence of curvature in the `fast part' of the limit cycle bounded away from the nullclines reflects the fact that both $h$ and $\tilde c$ are `fast' in this regime, indicating a non-standard time-scale separation.}} %insofar as distinct qualitative behaviours can be observed in three different regions: along the $h-$axis, along the $c-$axis, and away from both $h, c -$axes.}
	
	%, and Figure \ref{fig:non_dim} shows relaxation oscillations in system \eqref{eq:Non_dim} with the new parameter values in Table \ref{tab:params_non_dim}. Both time series contain components evolving on different time-scales, which is indicative of a \wm{non-standard} time-scale separation. 
%	\com{\textbf{Suggestion: perhaps we should also show the limit cycle here, with nullclines. This should address the reviewers point and illustrate the important regions in phase space.}}  \rev{This can also be observed in state space, where we observe distinct qualitative behaviour in three different regions: along the $h-$axis, along the $c-$axis, and away from both $h, c -$axes.}
	%\[
	%(h,c) \approx (1,1), \qquad (h,c) \approx (1,0), \qquad (h,c) \approx (0,1).
	%\]
	
	%\begin{remark}
	%	For analytical puposes, we recommend that the system be non-dimensionalised such that the leading order contribution on the fastest time-scale is of numerical order $1$.
	%\end{remark}

	\begin{figure}[t!]
		\centering
		\includegraphics[scale=0.5]{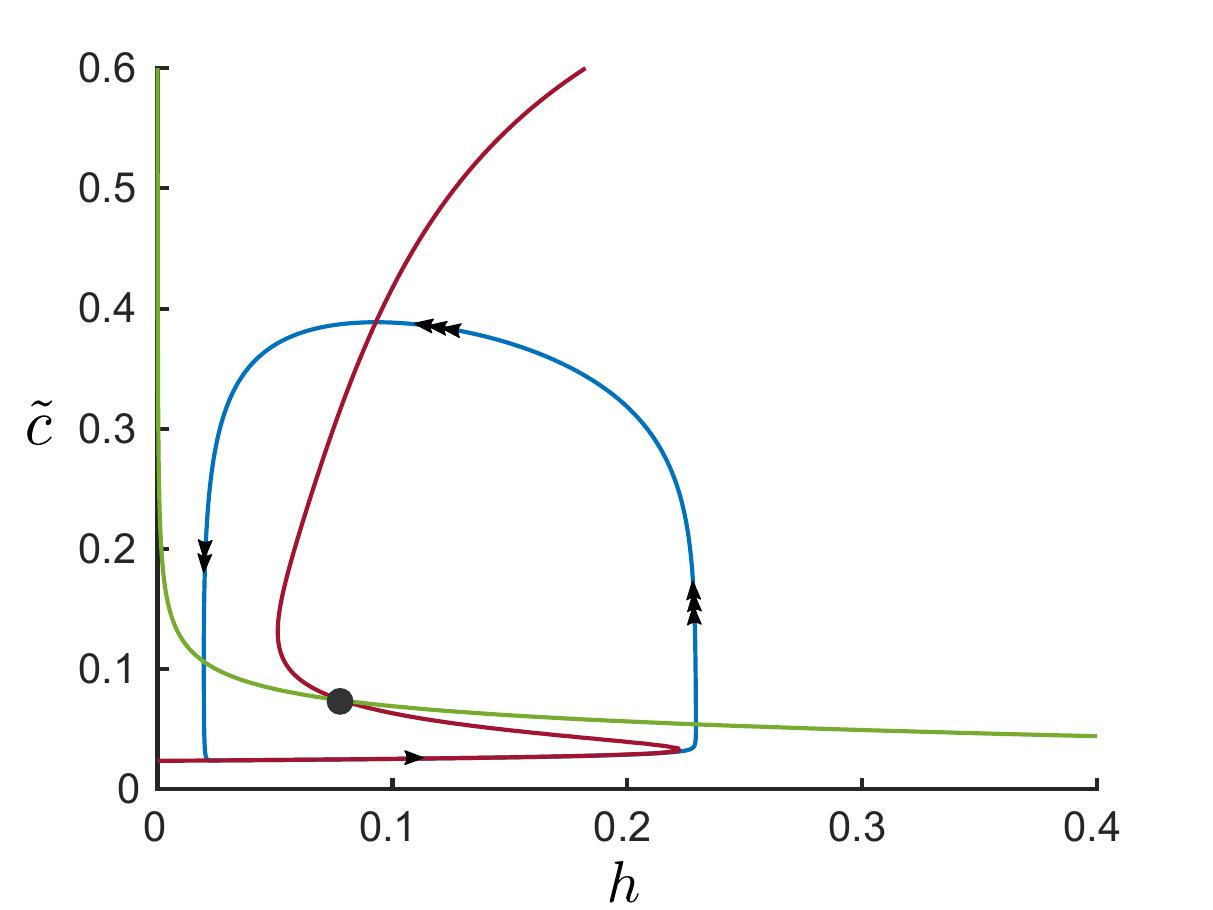}
		\caption{\rev{State space representation of the relaxation oscillations in system \eqref{eq:Non_dim} with parameter values from Table \ref{tab:params_non_dim}, c.f.~the corresponding time series in Figure \ref{fig:non_dim}. The $\tilde c' = 0$ and $h' = 0$ nullclines are shown in red and green respectively, and the unique equilibrium corresponding to their intersection is indicated by a black disk. Notice that the limit cycle closely follows the \revsj{$\tilde c' = 0$} nullcline near the \revsj{$h$ axis, % and to a lesser extent the $h$ axis} $h$ and $\tilde c$ axes, 
		indicative of multiple time-scale dynamics. Curvature in the phase of the limit cycle bounded away from the nullclines is indicative of a non-standard time-scale separation, since both $h$ and $\tilde c$ are fast in this regime. The relative speed of the oscillation about the limit cycle is indicated by single, double and triple arrows. Note that the downward motion (approximately parallel to the $\tilde c-$axis) is faster than the motion along the $\tilde c = 0$ nulllcline, but slower than the `upward' and `leftward' components of the oscillation; see again Figure \ref{fig:non_dim}b.}}}
	%	\we{[comment: a reader might ask why the trajectory doesn't seem to follow as closely the $c$-axis; we need to provide some additional statement here about multiple time-scales (indicate speed through arrows or heat map or refer to Fig 1b....)]}} %{\color{red}VK: The limit cycle does not follow the nullcline closely near the $\tilde c$ axis. There is a bit of a disparity between the size of the fonts for axis labels and tick-mark labels.}}
		\label{fig:phase_space}
	\end{figure}

	% subsection non_dimensionalisation (end)
	
%	\begin{remark}
%		\revsj{The relaxation oscillations in Figure \ref{fig:non_dim} were computed using system \eqref{eq:Non_dim} with the (dimensionless) parameters in Table \ref{tab:params_non_dim}.}
%	\end{remark}
	
	\
	
	In what follows we refer only to the dimensionless model \eqref{eq:Non_dim} and parameters \eqref{eq:nondim_pars}, \wm{dropping the tildes for notational simplicity}.
	
	%\WM{[NZ] I would prefer keeping the tildes in section 4 and only dropping them in section 5. It's otherwise confusing to me, i.e. does it refer to the original (Table 1) or non-dimensional (Table 2). I know you advocated to remove it.} {\color{red}{VK: Having tildes and hats at the same time creates a huge mess, and I think is hard to follow, hence my concern with retaining tildes in Section 4. Is there some way to satisfactorily make it clearer for you (e.g., by using more text) whether we refer to Table 1 or Table 2, and hence still drop tildes in Section 4?}}

	\rev{
		\subsection*{Step \we{(IIa): Associate small parameters to maximal process/flux rates}}
		
		Consider the dimensionless system \eqref{eq:Non_dim}. We need to compare relative magnitudes of the constant scaling factors associated to the (dimensionless) flux terms. These are given by
		\begin{equation}
			\label{eq:eps_numerical}
		\frac{1}{\tau_{\rm max}} = 1.8 \times 10^{-5} , \qquad \gamma k_{\rm IPR} = 1.0 , \qquad 
		V_s = 8.1 \times 10^{-3} , \qquad \frac{V_s K \gamma^2 K_s^2}{c_t^2} = 4.7 \times 10^{-8} ,
		\end{equation}
		using the parameter values in Table \ref{tab:params_non_dim}. Note that the largest scaling factor is $\gamma k_{\rm IPR} = 1.0$ due to the choice of non-dimensionalisation in step (I). Based on these order of magnitude comparisons, we introduce three candidate small parameters:
		\begin{equation}
			\label{eq:eps_123}
			\epsilon_1 = \frac{1}{\tau_{\rm max}} , \qquad \epsilon_2 = V_s, \qquad 
			\epsilon_3 = \frac{V_s K \gamma^2 K_s^2}{c_t^2} .
		\end{equation}
		Note that at this point, the $\epsilon_i$ are considered to be \we{independent}. Their magnitudes relative to each other will be considered in step (III). 
		
		Introducing the small parameters $\epsilon_i$, we obtain the system
		\begin{equation}
			\label{eq:step_II_system}
			\begin{split}
				\left(\frac{\tau_h(c)}{n_h} \right) h' &= \epsilon_1 \left( h_\infty(c) - h \right) , \\
				c' &= J_{\rm IPR}(h,c) - \epsilon_2 \left( \frac{\jserca^+(c)}{n_{\rm SERCA}^+} \right) + \epsilon_3 \left( \frac{\jserca^-(c)}{n_{\rm SERCA}^-} \right) ,
			\end{split}
		\end{equation}
		where $n_h = \tau_{\rm max}$, $n_{\rm SERCA}^+ = V_s$ and $n_{\rm SERCA}^- = V_s K \gamma^2 K_s^2 / c_t^2$ are the normalisation constants associated with $\tau_h(c)$, $J_{\rm SERCA}^+(c)$ and $J_{\rm SERCA}^-(c)$ respectively. \revsj{Notice that by \eqref{eq:eps_123}, we have
			\[
			\epsilon_1 n_h = \frac{\epsilon_2}{n^+_{SERCA}} = \frac{\epsilon_3}{n_{SERCA}^-} = 1 .
			\]
		%In what follows our analysis will depend upon the basic modelling assumption that a 
		In the perturbation analysis which follows, we consider the limiting dynamics as $\epsilon_1, \epsilon_2, \epsilon_3 \to 0$ while keeping $n_h, n^+_{SERCA}, n^-_{SERCA}$ fixed. The basic modelling assumption is that the perturbation analysis for $\epsilon_1, \epsilon_2, \epsilon_3 \ll 1$ is valid and informative for values of $\epsilon_1, \epsilon_2 , \epsilon_3$ up to the numerical values corresponding to equations \eqref{eq:eps_numerical} and \eqref{eq:eps_123}.}

		\subsection*{Step \we{(IIb): Associate small parameters to steep switches in process/flux terms}}
		
%		In step (II) we considered the relative sizes associated to particular flux terms in terms of their constant scaling factors. While such considerations are important and necessary for the identification of multiple scale structure, they do not take into account the fact that the relative size of flux contributions also depends on the region in state space. Such considerations are particularly relevant if the model features switching behaviour. 
		% in which the magnitude (or more importantly, the relative size) of process/flux terms vary significantly as a function of state variables.
		%As discussed already in Section \ref{sec:motivations_for_the_closed_cell_model}, 
		
		As observed in Section \ref{sec:motivations_for_the_closed_cell_model}, the presence of the Hill function $\tau_h(c)$ in the left-hand-side of system \eqref{eq:Full_model} is expected to lead to significant time-scale variation in the $h$ dynamics as a function of the calcium concentration $c$. However, this has not been accounted for in the introduction of small parameters $\epsilon_i$ in step \we{(IIa)}. This can be seen by considering $\epsilon_i \to 0$ for each $i = 1,2,3$ in system \eqref{eq:step_II_system}. The system obtained in this (singular) limit fails to capture the structure of the oscillations identified in Figures \ref{fig:non_dim} and \ref{fig:phase_space}. In particular, the resulting limiting problem has $h'=0$, % cannot describe the phase of the oscillations characterised by a downward motion along the $c-$axis, and the trivial fast dynamics 
		so its fast dynamics (vertical in $c$) cannot accurately represent the observed (fast, nonlinear) solution segment that occurs away from the $c-$ and $h-$axes.}
	%connection between the phases of the oscillation that pass close to the $h-$axis and the $c-$axis. %Such considerations motivate a closer look into switching mechanisms in the model.
	 
	%What is perhaps less clear at this stage, is the effect of the other switch-like terms (Hill functions) in the model on the relative magnitudes and hence multiple-scales structure of the dynamics.

	\begin{figure}
		\centering
		\includegraphics[scale=0.1]{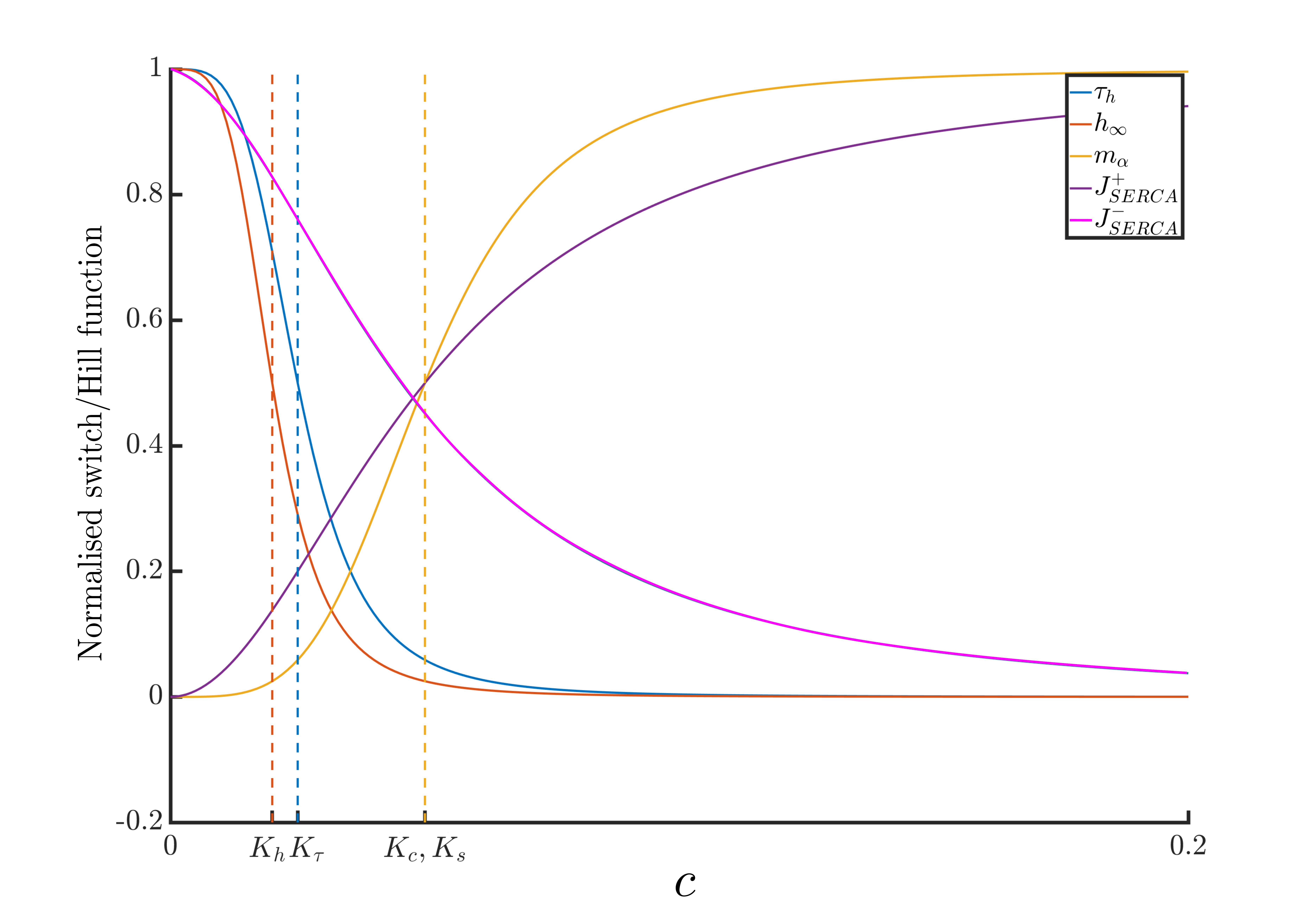}
		\caption{Normalised switch-like functions in system \eqref{eq:Non_dim}, which play a role in determining the dominant processes in \eqref{eq:Non_dim} for a given value of $c$. Corresponding half-values $K_i$ for $i=h,\tau,c,s$ are also shown. The $J^-_{\rm SERCA}$ profile is less than $0.5$ at the half-value $K_s$ due to extra $c_t-$dependence in the numerator in \eqref{eq:Jserca}. \rev{In order to derive a suitable perturbation problem from \eqref{eq:Full_model} we approximate the steepest of these, i.e.~$\tau_h(c)$ and $h_\infty(c)$, by one-sided non-smooth switches via the limits in \eqref{eq:switch_approx}.}}
		\label{fig:processes_fig}
	\end{figure}
	
	\
	
	\rev{In order to understand the role of switching in the model \eqref{eq:Full_model}, we consider the effect of the five Hill-type functions
		\begin{equation}
			\label{eq:Hill_functions}
			\tau_h(c), \ h_\infty(c), \ m_\alpha(c), \ J_{\rm SERCA}^\pm(c) .
		\end{equation}
		%in the model \eqref{eq:Full_model}. 
		Each (normalised) Hill function is plotted in Figure \ref{fig:processes_fig}, and each one is a candidate for approximation by a non-smooth switch. There are many ways to make such approximations, and the `best' choice may depend heavily on the problem at hand. For our particular model \eqref{eq:Full_model}, we shall appeal to the fact that general Hill-type functions
		\begin{equation}
			\label{eq:Hill}
			{\rm Hill}_{K_d}^+(c) = \frac{c^m}{K_d^m + c^m} \qquad \text{or} \qquad
			{\rm Hill}_{K_d}^-(c) = \frac{K_d^m}{K_d^m + c^m} ,
		\end{equation}
		with half-value $K_d$ and exponent $m \in \mathbb N_+$ can be well approximated by a suitable one-sided switch, given a sufficiently small half-value $K_d \ll 1$. This is achieved directly via the limit $K_d \to 0$ since
		\begin{equation}
			\label{eq:switch_approx}
			\lim_{K_d \to 0} {\rm Hill}_{K_d}^+(c) =
			\begin{cases}
				1 & \text{if } c > 0 , \\
				0 & \text{if } c = 0 , 
			\end{cases}
			\qquad \text{and} \qquad 
			\lim_{K_d \to 0} {\rm Hill}_{K_d}^-(c) =
			\begin{cases}
				0 & \text{if } c > 0 , \\
				1 & \text{if } c = 0 . 
			\end{cases}
		\end{equation}
		Such an approach has been used in \cite{kosiuk2016geometric} in the case of Michaelis-Menten terms (Hill functions \eqref{eq:Hill} with $m=1$), allowing for the detailed analysis of a non-standard relaxation oscillation in a minimal model of the embryonic cell cycle.} 
	\begin{comment}
	\
	
	\
	
	More precisely, we are interested in the approximation of general Hill functions
	\[
	{\rm Hill}^+(c) = \frac{c^m}{K_d^m + c^m} \qquad \text{or} \qquad
	{\rm Hill}^-(c) = \frac{K_d^m}{K_d^m + c^m} ,
	\]
	by parameter dependent functions $\Phi^{\pm}(c,\epsilon)$ such that
	\[
	\Phi^{\pm}(c,\delta) = {\rm Hill}^{\pm}(c) 
	\]
	for some numerically small $\delta > 0$, and
	\[
	\lim_{\epsilon \to 0} \Phi^+(c,\epsilon) = 
	\begin{cases}
	1  & \text{if } c > K_d , \\
	0  & \text{if } c < K_d ,
	\end{cases}
	\]
	or
	\[
	\lim_{\epsilon \to 0} \Phi^+(c,\epsilon) = 
	\begin{cases}
	1  & \text{if } c < K_d , \\
	0  & \text{if } c > K_d .
	\end{cases}
	\]
	\end{comment}
	\rev{In the following we consider which of the functions \eqref{eq:Hill_functions}, if any, it is reasonable to approximate in this way. 
	\we{
	\begin{remark}
	Alternatively, one could obtain another non-smooth switch out of Hill-type functions \eqref{eq:Hill} by fixing $K_d \in (0,1)$ and taking the limit $m\to \infty$ where the discontinuity is located away from the boundary at $c=K_d$. \rev{See \cite{Plahte2005,Ironi2011,Machina2013,Glass2018} for examples of such an approach in the context of gene regulatory networks.}
	\end{remark}
	}
		
		Making a limiting approximation of a smooth switch by a step function as in \eqref{eq:switch_approx} is valid only for `sufficiently \we{steep}' functions, i.e.,~the switch must exhibit a significant (order of magnitude or greater) variation over a `sufficiently narrow' region in state space. As with the question ``how small is small enough?", the question ``how \we{steep} is \we{steep} enough" is left to the discretion of the modeller, but can be informed by numerical and/or analytical considerations. %{\color{red} VK: I found the following statements (down to `Based on this principle') confusing because I read it as saying that a `sufficiently sharp' switch is one with slope greater than one. Thus it read like a proposal of a ``reasonable" approach  immediately followed by a choice to do something different. A clearer explanation might be to say we should evaluate the slopes at the midpoints and identify the steepest switches of those with slope greater than about 1 (since anything with slope less than one in non-dimensionalised coordinates could not be considered steep). Then try approximating just the steepest of those with non-smooth limiting switches. If that doesn't work, try adding the next steepest etc. Some of this could be covered off just be referring to the remark.} 
		A reasonable analytic approach is to adopt the notion that a switch may be considered \we{(sufficiently) `steep'} if the magnitude of the derivative (i.e.,~the slope) evaluated at its midpoint is \revsj{greater than $1$ by a numerical order of magnitude, i.e.,~by at least a factor of $10$.} %numerical order of magnitude greater than $1$. 
		Calculating slopes for the switches in Figure \ref{fig:processes_fig}, we find
		\begin{equation}
			\begin{gathered}
				\text{slope} (\tau_h) |_{K_\tau} = %1 / K_{\tau} 
				20 , \qquad
				\text{slope} (m_\alpha) |_{K_c} = %1 / K_c 
				10 , \qquad
				\text{slope} (h_\infty) |_{K_h} = %1 / K_h 
				25 , \\
				\text{slope} (J_{\rm SERCA}^+) |_{K_s} = %1 / 2 K_s 
				2.5 , \qquad
				\text{slope} (J_{\rm SERCA}^-) |_{K_s \approx K_{mid}} = 5.0  . 
			\end{gathered}
		\end{equation}
		The large slopes associated to $\tau_h(c)$ and $h_\infty(c)$ suggest that they are suitable candidates for approximation by a non-smooth switch. Conversely, the slopes associated with the $J^\pm_{\rm SERCA}(c)$ terms are still of numerical order $1$, which suggests that these terms should not be analysed via non-smooth approximations. The question remains as to whether or not the slope associated with $m_\alpha(c)$ %$\revsj{, which lies precisely `between' numerical orders of $1$ and $10$,} 
		should be considered as sharp enough for the validity of non-smooth approximations. In the following we \revsj{shall} adopt the principle that for borderline cases such as these additional approximations should be avoided\revsj{; see however Remark \ref{rem:sufficiently_sharp2} below}. \revsj{In accordance with this principle, we} %Based on this principle, we shall 
		restrict ourselves to the non-smooth approximation of only the steepest Hill functions $\tau_h(c)$ and $h_\infty(c)$. We therefore introduce two more small parameters
		\begin{equation}
			\label{eq:eps_45}
			\epsilon_4 = K_\tau , \qquad \epsilon_5 = K_h . 
		\end{equation}
		The resulting perturbation problem will be given in step \we{(III)}, once all five small parameters $\epsilon_i$ have been related to a single small parameter by a common scaling.

		\begin{remark}
		\label{rem:sufficiently_sharp2}
			In general, additional (non-smooth) approximations can be expected to lead to simpler limiting systems, but the validity of the perturbation problem derived via these approximations as a means for studying the dynamics of the model \eqref{eq:Full_model} becomes harder to justify with each approximation. The aim is to balance tractability and accuracy, keeping in mind that one often comes at the expense of the other. In practice, a posteriori arguments will often also play an important role in striking such a balance. We have found for the model \eqref{eq:Full_model}, that a non-smooth approximation of $\tau_h(c)$ and $h_\infty(c)$ alone is sufficient to capture the relevant dynamics in an analytically tractable way. However, one can continue to push for `more tractable' perturbation problems describing the model \eqref{eq:Full_model} by introducing additional approximations. The next step, is to approximate $m_\alpha(c)$ by a non-smooth switch, followed by $J^-_{\rm SERCA}(c)$, and so on. In general, this approximation procedure can be applied sequentially to each switch, starting with the steepest, and terminating at a point deemed by the modeller to strike the desired balance between accuracy and tractability. % is deemed in decreasing steepness for each identified switch,   % and terminated at the desired point.
		\end{remark}
	}

	\rev{
		\subsection*{Step \we{(III)}: Relate small parameters}
		
		Steps \we{(I)-(IIb)} above yielded no fewer than five independent small parameters $\epsilon_i, i = 1,\ldots,5$. In order to obtain a tractable perturbation problem, we introduce a common scaling for all five $\epsilon_i$ in terms of a single small parameter $\epsilon$. This can be achieved by defining $\epsilon$ in terms of one of the five $\epsilon_i$ and then determining sensible choices for the $a_i$ and $b_i$ in \eqref{eq:scaling_polynomial} based on order of magnitude comparisons for a fixed value of $\epsilon$ which returns the original parameter values in Table \ref{tab:params_non_dim}. We define a common small parameter $\epsilon$ via the choice
		\[
		\sqrt{\epsilon} := \epsilon_4 ,
		\]
		%which has numerical value $\sqrt{\epsilon} := K_{\tau}=0.05$, VK: I don't think it makes sense to include this statement, since we are thinking of eps varying. It is covered by the statements below in any case.
		and the polynomial scaling}
	\begin{equation}
		\label{eq:common_scaling}
		\epsilon := \epsilon_{1}^{1/2}{ \hat \tau_{\rm max}}^{1/2} = \epsilon_2 \hat V_s^{-1} = \epsilon_3 \hat K^{-1} \gamma^{-2} = \epsilon_4^2 = \epsilon_5^{2} \hat K_h^{-2} .
		%	\epsilon := \epsilon_\tau^2 = \epsilon_{\tau_{\rm max}}^{1/2}{ \hat \tau_{\rm max}}^{1/2} = \epsilon_h^{2} \hat K_h^{-2} = \epsilon_s \hat V_s^{-1} = \epsilon_K \hat K^{-1} \gamma^{-2},
	\end{equation}
	\rev{Equation \eqref{eq:common_scaling} amounts to a choice of $a_i$ and $b_i$ in \eqref{eq:scaling_polynomial}. Specifically, we have chosen exponents
		\[
		b_1 = 1/2, \quad 
		b_2 = 1, \quad
		b_3 = 1, \quad 
		b_4 = 2, \quad
		b_5 = 2 ,
		\]
		and coefficients $a_i$ (which are defined in terms of the hatted parameters in \eqref{eq:common_scaling}) which can be expressed in terms of the original (dimensionless) system parameters as follows:
	\begin{equation}
		a_1 = {\hat \tau_{\rm max}}=K_\tau^4 \tau_{\rm max}, 
		 \ \ a_2 = {\hat V_s}=V_s K_{\tau}^{-2}, 
		  \ \ a_3 = {\hat K}=K K_{\tau}^{-2} ,
		 \ \ a_4 = 1,
		 \ \ a_5 = {\hat K_h}=K_h K_\tau^{-1} .
		\label{eq:hatpars}
	\end{equation}
	Notice that the $a_i$ parameters $\hat \tau_{\rm max}, \hat K_h, \hat V_s$ and $\hat K \gamma^2$ are of numerical order $1$ as required; see Table~\ref{tab:params_post_scaling}. Moreover, by setting $\epsilon = K_\tau^2 = 2.5 \times 10^{-3}$ and \we{substituting} the new rescaled parameter values from Table~\ref{tab:params_post_scaling}, the values coincide with the original parameter values in Table~\ref{tab:params_non_dim}. Hence, for our particular problem, the assumption that the $\epsilon_i$ scale with a single $\epsilon$ according to \eqref{eq:common_scaling} amounts to the assumption that $\epsilon = 2.5 \times 10^{-3}$ is `sufficiently small' for the validity of perturbation arguments.}
	
	\begin{table}[t!]
		\center
		\begin{tabular}{|c|l||c|l|}
			\hline
			Parameter         & Value & Parameter            & Value      \\
			\hline
			$k_\beta$ & 0.4   & $\epsilon$     & 0.0025      \\
			$K_c$     & 0.1 & $\hat{V}_s$        & 3.24    \\
			$K_s$     & 0.1 & $\hat K$             & 0.0076 \\
			$K_p$     & 0.1   & $\hat K_h$        & 0.8      \\
			$\kipr$          & 0.18     & $\gamma$             & 5.5        \\
			$p$       & 0.025  & $\hat{\tau}_{\rm max}$ & 0.34       \\
			$c_t$     & 1  &  -    &   -    \\
			\hline
		\end{tabular}
		\caption{Numerical parameter values for  \eqref{eq:main2} and \eqref{eq:main1} obtained by setting $\epsilon=K_\tau^2=0.0025$ and applying \eqref{eq:hatpars}  to the values of $\tau_{\rm max}, V_s, K$ in Table \ref{tab:params_non_dim}. All other parameter values are unchanged. The hat notation will be dropped in Section \ref{sec:slow-fast_analysis_and_statement_of_the_main_result}.}
		\label{tab:params_post_scaling}
	\end{table}
	
	\rev{
		\begin{remark} %{\color{red}{VK: I found this remark (after the first sentence) confusing. I suggest either omitting all except the first sentence or explaining more clearly what ``remedied by redefining $\epsilon$'' means.}}
			The choice to fix $\sqrt{\epsilon} = \epsilon_4$ was made a posteriori so that the leading order perturbation in the perturbation problem obtained is of order $\epsilon$. %In practice, a good approach is to choose $$\epsilon := \max\{\epsilon_i : i = 1, \ldots , m \}$$ so that the appropriate exponents $a_i$ satisfy $a_i \geq 1$. If the leading order perturbation in the resulting system is of order $\epsilon^d$ for some $d \neq 1$, \revsj{one can always rewrite the system as a perturbation problem with respect to $\varepsilon := \epsilon^d$.}
		\end{remark}
	}

	After applying the common scaling \eqref{eq:common_scaling}, we finally arrive at a version of the model in a form that is amenable to \rev{perturbation} analysis, i.e., \eqref{eq:main2} defined in the following \rev{proposition}.
	
	\begin{proposition}
		\label{prop:sing_pert_form}
		System \eqref{eq:Non_dim} can be written as
		\begin{equation}\label{eq:main2}
			\begin{split}
				h' & = \hat \tau_{\rm max}^{-1} \left(\mathfrak h_{\infty}({c},\epsilon) - h\right) \left({c}^4 + \epsilon^2 \right), \\
				{c}' &= \mathfrak J_{\text{IPR}}(h,{c},\epsilon) - \epsilon \mathfrak{J}^+_{\text{\rm SERCA}}({c},\epsilon) + \epsilon^2 \mathfrak{J}^-_{\text{\rm SERCA}}({c},\epsilon),
			\end{split}
		\end{equation}
		where
		\begin{equation}
			\label{eq:mathfrak_h}
			\mathfrak h_\infty(c,\epsilon) = \frac{\epsilon^2 \hat K_h^4}{\epsilon^2 \hat K_h^4 + c^4} ,
		\end{equation}
		$\mathfrak J_{\text{IPR}}(h,c,\epsilon)$ is given by \eqref{def:PO} with explicit $\epsilon-$dependence due to $K_\tau = \sqrt{\epsilon}$, 
		and
		\begin{align}\label{eq:serca_terms}
			\mathfrak{J}_{\text{\rm SERCA}}^+(c) = \hat V_s \left( \frac{c^2}{K_s^2+c^2} \right), \quad 
			\mathfrak{J}_{\text{\rm SERCA}}^-(c) = \hat K \gamma^2 \hat V_s\left( \frac{(c_t - c)^2}{ K_s^2+c^2} \right) .
		\end{align}

		By considering $\epsilon \ll 1$, system \eqref{eq:main2} can be written as the series expansion
		\begin{align}
			\label{eq:main1}
			\begin{split}
				\begin{pmatrix}
					h' \\
					{c}'
				\end{pmatrix}
				=&
				\begin{pmatrix}
					- \hat \tau_{\rm max}^{-1} \\
					\mathfrak{J}^{(0)}_{\text{\text{IPR}}}(h,{c})
				\end{pmatrix}
				{c}^4h + \epsilon
				\begin{pmatrix}
					0 \\
					- \mathfrak J^+_{\text{\rm SERCA}}({c})
				\end{pmatrix}
				\\
				&+ \epsilon^2
				\begin{pmatrix}
					\hat \tau_{\rm max}^{-1}(\hat K_h^4 - h) \\
					\mathfrak{J}^{(1)}_{\text{\text{IPR}}}(h,{c}){c}^8h + \mathfrak J^-_{\text{\rm SERCA}}({c})
				\end{pmatrix}
				+ \epsilon^4
				\begin{pmatrix}
					R_{h_\infty}({c},\epsilon) \\
					R_{\text{\text{IPR}}}(h,{c},\epsilon)
				\end{pmatrix} ,
			\end{split}
		\end{align}
		with IPR terms
		\[
		\mathfrak{J}^{(k)}_{\textrm{\text{IPR}}}(h,c) := \left( \frac{1}{c^{4(k+1)} h}\right) k_{\text{IPR}} P_O^{(k)}(h,c,0) ( \gamma c_t - (1+ \gamma) c), \qquad k = 0, 1,
		\]
		where
		\begin{equation}
			\begin{split}
				P_O^{(0)}(h,c) &= \frac{p^2 c^4 h}{p^2 (1 + k_\beta) c^4 h + k_\beta K_p^2(K_c^4 + c^4)}, \\
				P_O^{(1)}(h,c) &= \frac{k_\beta K_p^2 p^2 c^4 h}{\left(c^4 h(k_\beta+1) p^2+k_\beta K_p^2 \left(c^4+K_c^4\right)\right)^2} .
			\end{split}
		\end{equation}
		The remainder terms $R_{h_\infty}({c},\epsilon)$ and $R_{\text{\text{IPR}}}(h,{c},\epsilon)$ are $O(1)$ as $\epsilon \to 0$.
	\end{proposition}
	\begin{proof}
		System \eqref{eq:main2} is obtained from system \eqref{eq:Non_dim} after making the substitutions in \rev{\eqref{eq:eps_123} and \eqref{eq:eps_45}}, 
		%\eqref{eq:h_scale_tau}, \eqref{eq:Kh_scaling}, \eqref{eq:h_scale_tau_\rm max} and \eqref{eq:c_scale}
		applying the common scaling \eqref{eq:common_scaling}, and dividing both sides of the equation for $h'$ by $\tau_h(c)$.
		
		The expansion in \eqref{eq:main1} follows after significant but standard algebraic manipulations and Taylor expansion in $\epsilon$.
	\end{proof}
	
	%VK: I did not check in detail the expansion for J_IPR
	
	Systems \eqref{eq:main2} and \eqref{eq:main1} will be considered in detail in \rev{the following section}, % \ref{sec:slow-fast_analysis_and_statement_of_the_main_result}, 
	where we present a GSPT analysis of the dynamics for $0 < \epsilon \ll 1$. Since only these systems will be considered in the remainder of the manuscript, we will drop the hat notation on rescaled parameters, for the sake of readability. Final system parameters are given in Table \ref{tab:params_post_scaling}.

	\section{Multiple time-scale analysis} %and statement of the main result} VK: We've shifted the emphasis a bit to the algorithm, so I am not sure it makes sense to call this "the main result"
	\label{sec:slow-fast_analysis_and_statement_of_the_main_result}

	\rev{Having derived a singular perturbation problem via \we{`pre-processing'} steps \we{(I)-(III)} in Section \ref{sec:steps_I_IV}, it remains to carry out step \we{(IV)}, i.e.,~a} singular perturbation analysis of system% \eqref{eq:main2} resp.~
	~\eqref{eq:main1}. \rev{This will allow us to %state our main result, namely, 
	{\color{black}{prove}} existence and uniqueness of stable three-time-scale relaxation oscillations in the singularly perturbed formulation %of \eqref{eq:main2} and 
	\eqref{eq:main1}.}

	 \revsj{It is worth noting (see also Section \ref{ssec:statement_of_the_main_result} and Remark \ref{rem:sing_analysis} below), that a simple existence result for the limit cycle can also be derived using standard phase plane arguments and the Poincar\'e-Bendixson theorem. While such an approach has the advantage of simplicity, singular perturbation analyses are typically better suited to uncovering detailed information about the geometric and multiple time-scale structure of the oscillations. Our analysis allows for a clear identification of the dominant physiological processes controlling each phase of the limit cycle. There are also methodological motivations for the use of singular perturbation arguments. In particular, the methods developed and applied herein can be readily adapted and applied to multiple time-scale problems in higher dimensions.}

%	{\color{red}{I think this is a good place to briefly address the referee's comments about it being possibly easier to do all this with Poincare-Bendixson or other standard stuff.  Maybe say something like "As will be discussed further in Section 5.3 below, there are other, apparently simpler, ways to prove the existence of ... However, the advantages of our approach include the ability to clearly identify the dominant physiological processes in each phase of the limit cycle and the ability to generalise the method to higher-dimensional systems; see xxx (a reference to a Remark number if the commentary on this stays as a Remark).}}
	%%%
	%\begin{comment}
	%As we have emphasised throughout, the system \eqref{eq:main1} is \emph{not} in the standard form for singular perturbation problems
	%\begin{align}\label{eq:stnd_form}
	%\begin{array}{lcl}
	%x'=\epsilon \tilde g(x,y,\epsilon), \\
	%y'=\tilde f(x,y,\epsilon),
	%\end{array}
	%\end{align}
	%i.e. the multiple time-scale structure is \wm{non-uniform} in the sense that it cannot be (globally) represented in terms of a splitting into slow and fast variables. In fact system \eqref{eq:main1} respectively \eqref{eq:main2}
	%\end{comment}
	%%%
	%which may be considered as problems in the general form \eqref{eq:non_standard_GSPT}. Explicitly, 
	
	\
	
	\rev{System \eqref{eq:main1} is not in the standard form for slow-fast systems, but} can be written in the general form %\eqref{eq:non_standard_GSPT} as
	\begin{equation}\label{eq:general_form}
		\begin{pmatrix}
			h' \\
			c'
		\end{pmatrix}
		=g(h,c)+\sum_{j\geq 1} \epsilon^j W_j(z) = N(h,c)f(h,c)+\epsilon G(h,c,\epsilon), \qquad 0 < \epsilon \ll1,
	\end{equation}
	where $g(h,c) := N(h,c)f(h,c)$, with
	\[
	N(h,c) =
	\begin{pmatrix}
		- \tau_{\rm max}^{-1} \\
		\mathfrak{J}^{(0)}_{\text{IPR}}(h,c)
	\end{pmatrix},
	\qquad
	f(h,c)=c^4h,
	\]
	and $G(h,c,\epsilon) := \sum_{j\geq 1} \epsilon^j W_j(z)$ is given by
	\[
	G(h,c,\epsilon) =
	\begin{pmatrix}
		0 \\
		- \mathfrak J^+_{\text{\rm SERCA}}(c)
	\end{pmatrix}
	+ \epsilon
	\begin{pmatrix}
		\tau_{\rm max}^{-1}(K_h^4 - h) \\
		\mathfrak{J}^{(1)}_{\text{IPR}}(h,c)c^8h + \mathfrak J^-_{\text{\rm SERCA}}(c)
	\end{pmatrix}
	+ \epsilon^3
	\begin{pmatrix}
		R_{h_\infty}(c,\epsilon) \\
		R_{\text{IPR}}(h,c,\epsilon)
	\end{pmatrix}.
	\]
	Singular perturbation problems in the general \rev{(non-standard)} form \eqref{eq:general_form} frequently arise in applications, and have been studied using a combination of GSPT and blow-up techniques in, e.g.,~\cite{Gucwa2009,Huber2005,kosiuk2016geometric,KuehnSzmolyan2015,Lizarraga2020,Lizarraga2020b}. For a formal introduction to the use of GSPT in analysing problems in the general form \eqref{eq:general_form}, we refer the reader to \cite{jelbart2019two} for the planar case, and to \cite{Goeke2014,wechselberger2018geometric} for the general (dimension $n\geq 2$) case. See also \cite{Kruff2019,Lizarraga2020b,Lizarraga2020c} for applications to systems with $m>2$ time-scales.

	\rev{It is important to realise that the presence of switching in system \eqref{eq:general_form} leads to distinct limiting systems depending on calcium concentration $c$. Specifically, for {\color{black}{small $c$, if we write $c = \sqrt{\epsilon}C$ with $C= O(1)$, we have
	\[
	\lim_{\epsilon \to 0} \mathfrak h_\infty(\sqrt{\epsilon}C,\epsilon) = \frac{K_h^4}{K_h^4 + C^4} ,
	\]
	while for $c = O(1)$,}} 
	the limit $\lim_{\epsilon \to 0}\mathfrak h_\infty(c,\epsilon)$ is given by the second expression in \eqref{eq:switch_approx}. Such observations lead to the identification of two non-overlapping regimes:
	
	\smallskip
	
	\begin{enumerate}
		\item[(R1)] $c \in \mathcal I_1=[a,b]$ for some fixed $a$ and $b$ with $0<a<b$;
		
		\smallskip
		
		\item[(R2)] $c \in \mathcal I_2=[0,\sqrt{\epsilon}\nu]$ for some fixed $\nu>0$. 
	\end{enumerate}
	
	\medskip
	
	\noindent For sufficiently small $\epsilon \ll 1$, $c = O(1)$ in (R1) and $c = O(\sqrt{\epsilon})$ in (R2).} Singular limit analyses in regimes (R1) and (R2) will provide sufficient information for the statement of our main result in Section \ref{ssec:statement_of_the_main_result}. % i.e.~the existence of an attracting relaxation cycle in \eqref{eq:main1}. 
	The observed dynamics and corresponding analysis is qualitatively similar, but not identical, to the autocatalator model considered in \cite{Gucwa2009}. \SJ{Similar systems have also been studied in the (R2) regime only in \cite{Kuehn2014}.}

	\rev{
		\begin{remark}
			%Due to the switching properties of system \eqref{eq:general_form}, t
			The right-hand-side in \eqref{eq:general_form} is $C^\infty$ for each fixed $0 < \epsilon \ll 1$, but only $C^{r}$ {\color{black}{(for some $r \geq 0$)}} in the limit $\epsilon \to 0$ due to a jump discontinuity in the switch terms. In general, the order $r$ is important since it imposes analytical and methodological constraints, particularly if $r=0$. In the case of system \eqref{eq:main1}, we have $r=2$ because of the term $\mathfrak h_{\infty}(c,\epsilon) \epsilon^2$, since $\mathfrak h_{\infty}(c,\epsilon)$ is discontinuous at $c=\epsilon=0$. The case $r = 0$ can arise in applications \cite{Kristiansen2019d}, in which case, the resulting perturbation problem can be understood as a smooth perturbation of a piecewise-smooth dynamical system.
			% It is worthy to note that even in such cases, certain adaptations of the blow-up method can often be used in order to study the dynamics of such systems from a geometric viewpoint \cite{Llibre2007,Llibre2009,kristiansen2018a,Kristiansen2020,Kristiansen2019d}.
		\end{remark}
	
%	\begin{remark}
%		The fact that $r \geq 1$ is significant analytically, since it means that the system can be studied using perturbation methods including GSPT. It is worthy to note, however, that the $r = 0$ can also arises in applications \cite{Kristiansen2019d}. In this case, the resulting perturbation problem can be viewed as a (smooth) perturbation of a piecewise-smooth dynamical system. It is worthy to note that even in such cases, certain adaptations of the blow-up method can often be used in order to study the dynamics of such systems from a geometric viewpoint \cite{Llibre2007,Llibre2009,kristiansen2018a,Kristiansen2020,Kristiansen2019d}.
%	\end{remark}
	}

	\subsection{\wm{Multiple time-scale} analysis in regime (R1)}
	\label{ssec:singular_limit_analysis_R1}

	We first consider the dynamics in regime (R1). Setting $\epsilon=0$ in \eqref{eq:general_form} gives the \emph{layer problem},
	\begin{equation}\label{eq:layer_R1}
		\begin{pmatrix}
			h' \\
			c'
		\end{pmatrix}
		=
		N(h,c) f(h, c)
		=
		\begin{pmatrix}
			- \tau_{\rm max}^{-1} \\
			\mathfrak{J}^{(0)}_{\text{IPR}}(h,c)
		\end{pmatrix}c^4h,
	\end{equation}
	which has two lines of equilibria, or \emph{critical manifolds},
	\begin{equation}
		\label{eq:manifolds_R1}
		S_c = \left\{(0, c) : c \geq 0 \right\}, \qquad S_h = \left\{(h, 0) : h \geq 0 \right\} .
	\end{equation}
	
	\begin{comment}
	\begin{rem}
	The presence of critical manifolds $S_c$ and $S_h$ \textit{define} system \eqref{eq:manifolds_R1} as a singular (as opposed to regular) perturbation problem in the GSPT sense, originally defined in Fenichel's seminal work \cite{fenichel1979geometric}.
	\end{rem}
	\end{comment}
	
	\begin{rem}
		In contrast to standard form slow-fast systems (see,  e.g.,~\cite{Kuehn2015}),
		%	\begin{align}\label{eq:stnd_form}
		%	\begin{array}{lcl}
		%	x'=\epsilon \tilde g(x,y,\epsilon), \\
		%	y'=\tilde f(x,y,\epsilon),
		%	\end{array}
		%	\end{align}
		there is no globally defined slow variable in system \eqref{eq:main1}. Hence the layer problem \eqref{eq:layer_R1} is \emph{not} a bifurcation problem in either $h$ or $c$.
	\end{rem}
	
	\begin{comment}
	\begin{rem}
	\label{rem:auxilliary_system}
	Orbits of the layer problem \eqref{eq:layer_R1} are described by the auxiliary system $(h',c')^T = N(h,c)$, which is orbitally equivalent to system \eqref{eq:layer_R1} on $\{(h,c):h>0,c>0\}$ via a transformation of time $d\hat t = f(h,c) dt$. % We will often use the fact that these systems are orbitally equivalent away from the set $S = \{(h,c) : f(h,c) = 0\}$.
	\end{rem}           
	\end{comment}              
	
	%In considering stability properties of the manifolds $S_c$ ($S_h$), 
	Evaluating the Jacobian at any given $p\in S_c$ or $S_h$ yields a trivial eigenvalue $\lambda_0 \equiv 0$, whose corresponding eigenvector spans the corresponding tangent space at $p$. Direct calculations 
	%show that $S_c$ (resp.~$S_h$) also has an 
	\wm{provide the} associated non-trivial eigenvalue $\lambda_c(c)$ (resp.~$\lambda_h(h)$) \wm{of $S_c$ (resp.~$S_h$)}, given by the Lie derivative expressions
	\begin{equation}
		\label{eq:evs_R1}
		%\begin{split}
		\lambda_c(c) = \langle \nabla f,N\rangle \big |_{S_c} = - \tau_{\rm max}^{-1} c^4 \leq 0 ,  \qquad
		\lambda_h(h) = \langle \nabla f,N\rangle \big |_{S_h} \equiv0,
		%\end{split}
	\end{equation}
	see also \cite[eqn.~(3.7)]{jelbart2019two}.
	%
	\begin{comment}
	\begin{lemma} (\cite{jelbart2019two,wechselberger2018geometric}). 
	Given a planar singular perturbation problem in the general form \eqref{eq:general_form} with critical manifold $S = \{(x,y) : f(x, y) = 0 \}$. Then the \wm{non-uniform} eigenvalue at a point $p \in S$ is given by the inner product
	\begin{equation}\label{eq:nontrivial_ev}
	\lambda(p) = \left\langle \nabla f(p), N(p) \right\rangle .
	\end{equation}
	The corresponding eigenvector lies within $\mathcal N_p = span (N(p))$.
	\end{lemma}
	
	\begin{proof}
	Linearisation; see also \cite[eqn.~(3.7)]{jelbart2019two}.
	\end{proof}
	\end{comment}
	%
	%In agreement with common nomanclature, points $p \in S$ for which $\lambda(p) \neq 0$ are called \textit{normally hyperbolic}. Subsets $S_n \subseteq S$ consisting entirely of such points are also referred to as normally hyperbolic. 
	%
	\begin{comment}
	\begin{rem}
	\label{rem_contact}
	In the case that $p\in S$, $\nabla f(p) \neq 0$ and $N(p) \neq 0$, equation \eqref{eq:nontrivial_ev} implies that a loss of normal hyperbolicity occurs at a tangency between the fast dynamics and the critical manifold $S$ (c.f. the loss of normal hyperbolicity at a fold point in standard form problems \eqref{eq:stnd_form}).
	\end{rem}
	\end{comment}
	%
	%In our case, we obtain
	%\begin{equation}
	%\label{eq:evs_R1}
	%\begin{split}
	%\lambda_c(h) &= \langle \nabla f(h,0),N(h,0)\rangle\equiv0, \\
	%\lambda_h(c) &= \langle \nabla f(c,0),N(c,0)\rangle= - c^4 \leq 0 .
	%\end{split}
	%\end{equation}
	Hence, the manifold $S_c$ is normally hyperbolic and attracting for all $c > 0$, and degenerate at $(0,0)$. The manifold $S_h$ is degenerate.
	
	Fast fibers constitute non-trivial heteroclinic connections between $S_h$ and $S_c$ in the layer problem \eqref{eq:layer_R1}. In particular, they obey the equation
	\begin{equation}\label{eq:fibers_R1}
		\frac{\text{d}c}{\text{d}h} = - \tau_{\rm max} \mathfrak{J}^{(0)}_{\text{IPR}}(h,c)
		\begin{cases}
			< 0 \qquad & \text{if} \qquad c < \left(\frac{\gamma}{1+\gamma} \right)c_t, \\
			= 0 \qquad & \text{if} \qquad c = \left(\frac{\gamma}{1+\gamma} \right)c_t, \\
			> 0 \qquad & \text{if} \qquad c > \left(\frac{\gamma}{1+\gamma} \right)c_t,
		\end{cases}
	\end{equation}
	%i.e.~their orbits are determined by the auxilliary problem $(h', c')^T = N(h,c)$; see Remark \ref{rem:auxilliary_system} and
	see Figure \ref{SingLimFig}.
	
	\begin{figure}[t!]
		\centering
		%		\begin{subfigure}[b]{0.48\linewidth}
		\includegraphics[scale=0.23]{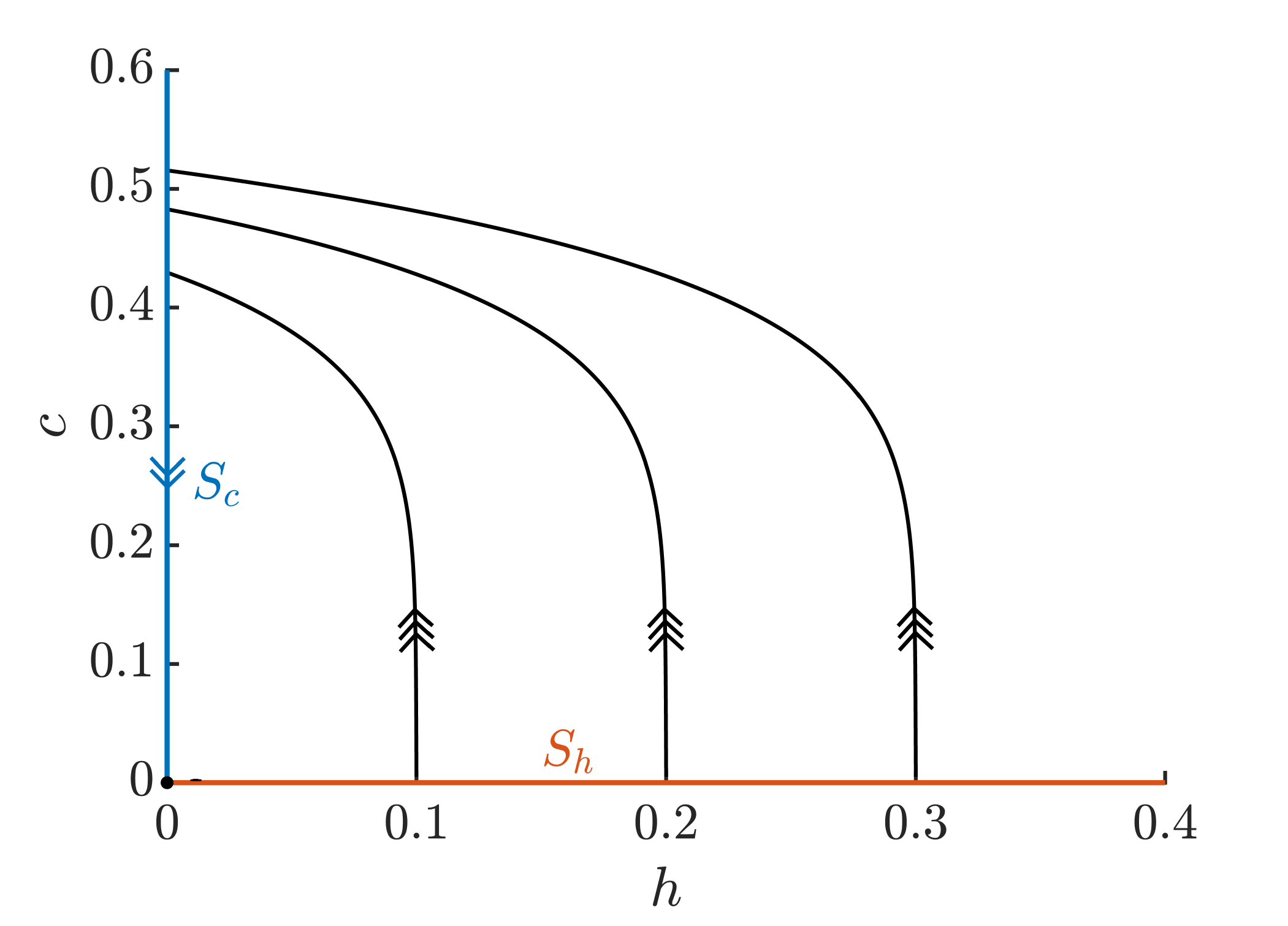} 
		\includegraphics[scale=0.23]{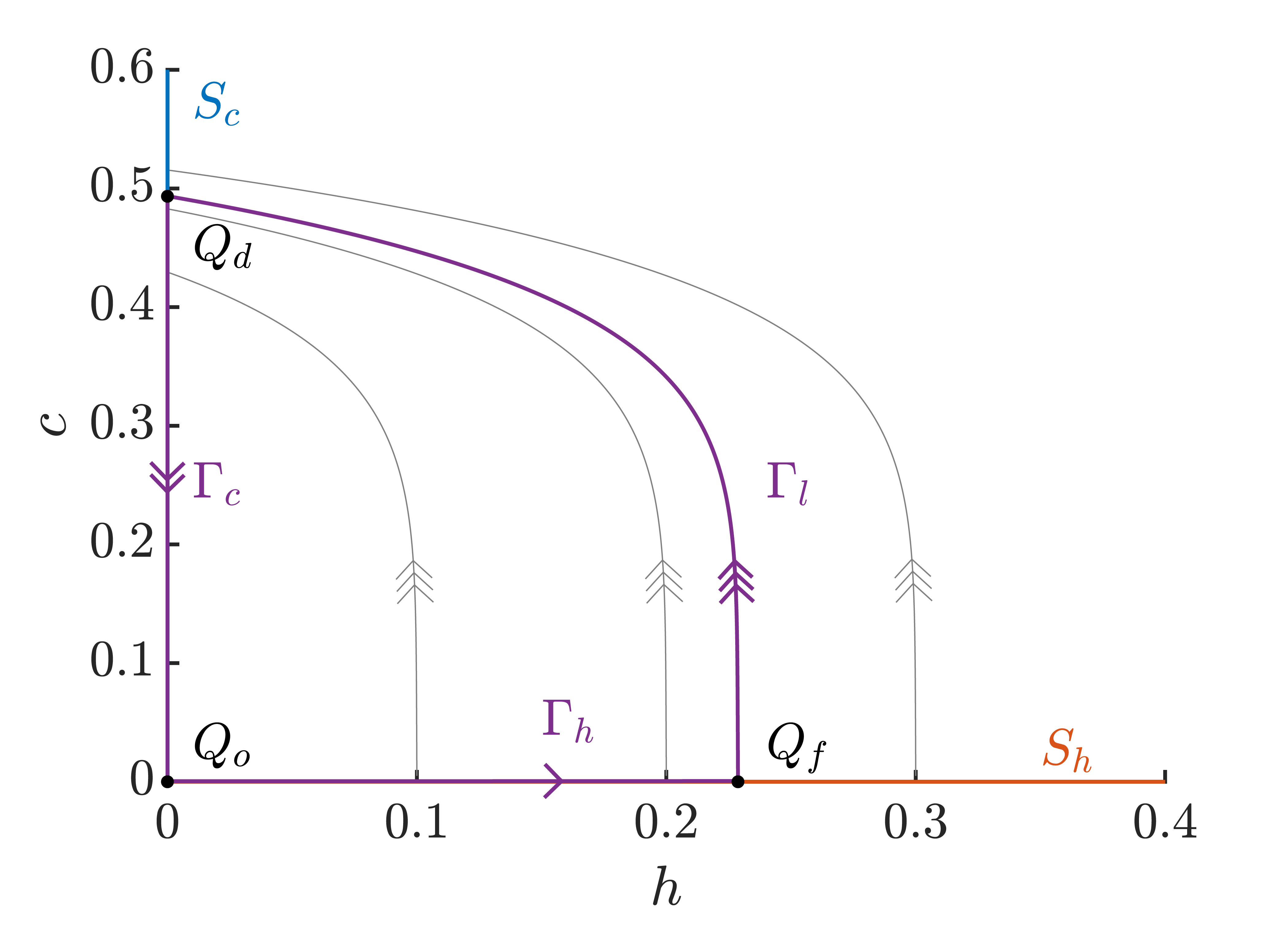}
		%		\caption{\textbf{[Old one]}}
		%		\end{subfigure}
		%		\begin{subfigure}[b]{0.48\linewidth}
		%		\includegraphics[width=\linewidth]{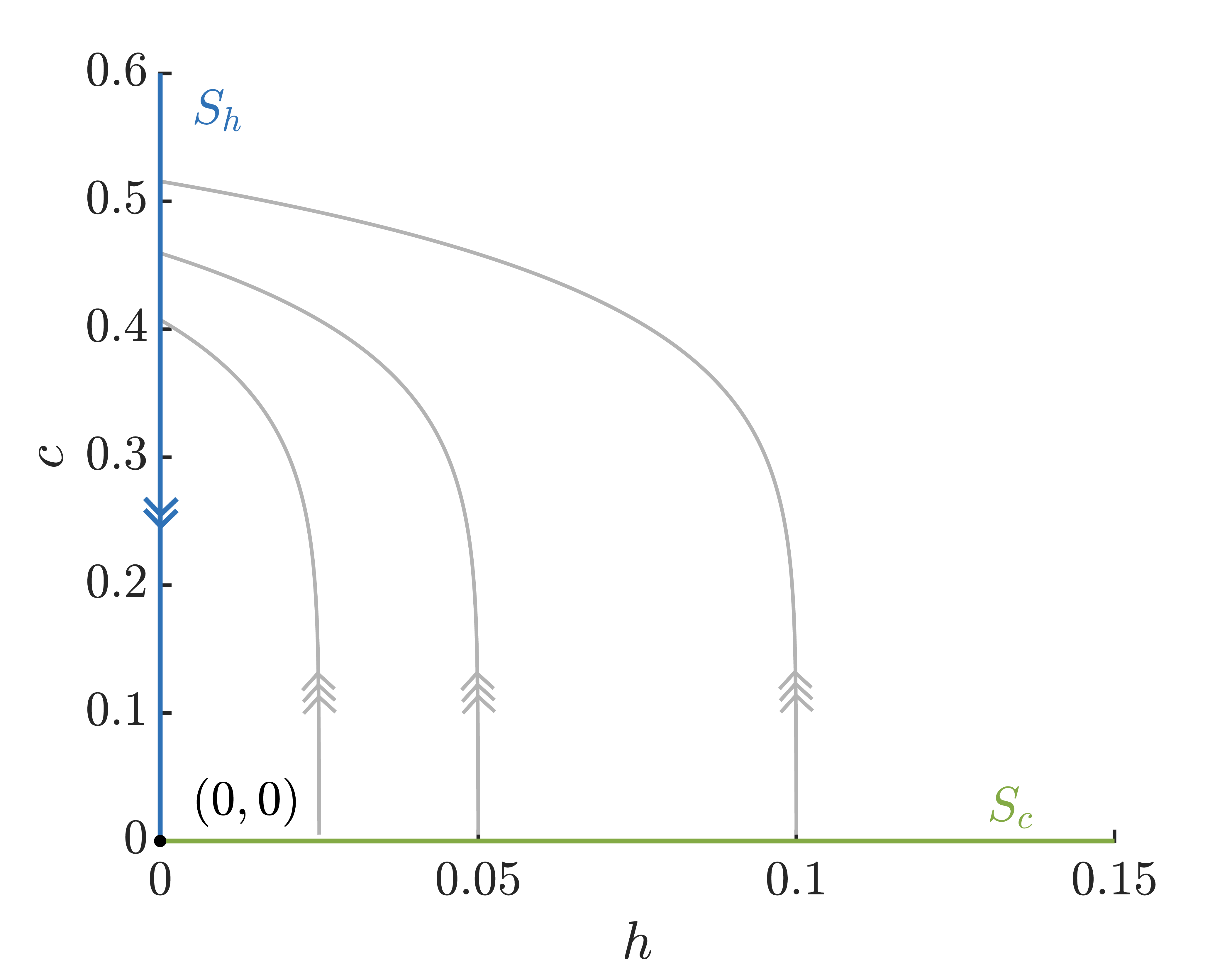}
		%		\caption{\textbf{[New one]}}
		%		\end{subfigure}
		\caption{Singular limit dynamics. (Left) The (nonlinear) fast connection from $S_h$ to $S_c$ is determined by the layer problem \eqref{eq:layer_R1} and the flow on $S_c$ is given by the reduced problem \eqref{eq:reduced_R1}. The manifold $S_h$ is degenerate. \wm{(Right) The singular orbit $\Gamma=\Gamma_h \cup \Gamma_c \cup \Gamma_l$. Note, the flow indicated on $S_h$ is not formally defined, and included for illustrative purposes only; it represents the rightward motion along $\mathcal S_{a,\delta}$ in Figure \ref{fig:manifold_R2}, and helps to illustrate the three-time-scale structure of the oscillations.}}\label{SingLimFig}
	\end{figure}
	
	\begin{rem}
		\label{rem:return}
		The expressions in \eqref{eq:fibers_R1} are not on their own sufficient to prove that orbits of the layer problem connect points on $S_h$ and $S_c$. A proof of this property %is deferred 
		\rev{follows from Lemma \ref{lem:map12} in Appendix \ref{app:proof_theorem}, and is} omitted here for expository reasons.
	\end{rem}
	
	\wm{Next, we consider \eqref{eq:general_form} on the slow time-scale $\tau = \epsilon t$, i.e.,
		\begin{equation}\label{eq:general_form_slow}
			\epsilon
			\begin{pmatrix}
				\dot{h} \\
				\dot{c}
			\end{pmatrix}
			= N(h,c)f(h,c)+\epsilon G(h,c,\epsilon), \qquad 0 < \epsilon \ll1.
		\end{equation}
		The following result (see \cite{jelbart2019two}) considers the limit $\epsilon \to 0$ of \eqref{eq:general_form_slow}, called the \textit{reduced problem}, which describes the leading order slow flow on $S_c$:}
	%The aim is to obtain a description of the flow on
	%normally hyperbolic submanifolds of $S_c$. 
	%This is provided by the following result from \cite{jelbart2019two}; see also \cite{fenichel1979geometric,goeke2014constructive,wechselberger2018geometric} for similar methods applicable in the dimension $n > 2$ case.
	
	\begin{lemma}
		\label{lem:reduced}
		Given a planar singular perturbation problem in the general form \eqref{eq:general_form}, with critical manifold $S$, and normally hyperbolic submanifold $S_n \subset S$. Then the reduced problem on $S_n$ is given by
		\begin{equation}\label{eq:reduced_eqn}
			\begin{pmatrix}
				\dot h \\
				\dot c
			\end{pmatrix}
			=
			\left[\frac{\det (N|G)}{\left\langle \nabla f, N \right\rangle}
			\begin{pmatrix}
				-D_cf \\
				D_hf
			\end{pmatrix}\right]
			\bigg|_{S_n} ,
		\end{equation}
		where the $(\dot{\ })$ notation denotes differentiation with respect to slow time $\tau$, and the determinant $\det (N|G)$ is taken with respect to the matrix with columns $N(h,c)$ and $G(h,c,0)$.
	\end{lemma}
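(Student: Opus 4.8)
The plan is to derive the reduced flow via the coordinate-independent projection of the slow dynamics onto the tangent bundle of $S_n$ along the fast fibres, following the framework of \cite{jelbart2019two,wechselberger2018geometric}. Passing to the slow timescale $\tau = \epsilon t$, system \eqref{eq:general_form} becomes $\epsilon \dot z = N f + \epsilon G$ with $z = (h,c)$. As $\epsilon \to 0$, bounded solutions are confined to $S = \{f = 0\}$, so the reduced flow $\dot z$ must lie in the tangent space $T_p S_n = \ker \nabla f(p)$. In the plane this tangent space is one-dimensional and spanned by the rotated gradient $T(p) := (-D_c f, D_h f)^\top$, since $\langle \nabla f, T \rangle \equiv 0$. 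Hence the reduced flow necessarily takes the form $\dot z = \mu(p)\, T(p)$ for some scalar $\mu$, and the task reduces to determining $\mu$.

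To find $\mu$ I would exploit the fact that on the normally hyperbolic set $S_n$ we have $\langle \nabla f, N \rangle \neq 0$ (this inner product is precisely the non-trivial eigenvalue appearing in \eqref{eq:evs_R1}), so the fast direction $N(p)$ and the slow direction $T(p)$ are linearly independent and form a basis of $\mathbb{R}^2$ at each $p \in S_n$. Decomposing both the slow velocity $\dot z = \alpha N + \beta T$ and the leading-order perturbation $G(p,0) = a N + b T$ in this basis and substituting into $\epsilon \dot z = N f + \epsilon G$, the $T$-components yield $\epsilon \beta = \epsilon\,(b + O(\epsilon))$, so $\beta = b + O(\epsilon)$, while the $N$-components give $\epsilon \alpha = f + \epsilon a$; the latter forces $f = O(\epsilon)$ and, in the limit $\epsilon \to 0$, $f = 0$ consistent with motion on $S_n$, with the normal component $\alpha$ serving only to keep the trajectory on the slow manifold. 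Thus the reduced flow is the tangential part $\dot z = b\, T$, i.e. the projection of $G$ onto $T_p S_n$ along $N(p)$.

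It then remains to compute $b$ explicitly. Solving $G = aN + bT$ by Cramer's rule with the $2\times 2$ matrix whose columns are $N$ and $T$, the determinant in the denominator is $\det(N \mid T) = N_1 D_h f + N_2 D_c f = \langle \nabla f, N\rangle$, while the numerator is $\det(N \mid G)$; hence $b = \det(N\mid G)/\langle \nabla f, N\rangle$, which gives the stated formula \eqref{eq:reduced_eqn} upon restriction to $S_n$.

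The hard part will not be the algebra but the justification of the limiting projection: one must argue that, in the singular limit, the reduced vector field is exactly the $N$-parallel projection of $G$ onto $T_pS_n$. This is where normal hyperbolicity is essential — it guarantees $\langle \nabla f, N\rangle \neq 0$, so that the splitting $\mathbb{R}^2 = \mathrm{span}(N) \oplus T_pS_n$ is well-defined, and it is what allows Fenichel theory to supply a genuine invariant slow manifold $O(\epsilon)$-close to $S_n$ carrying this flow. I would therefore either invoke the general reduction result of \cite{jelbart2019two,wechselberger2018geometric} directly, or, for a self-contained argument, verify that the right-hand side of \eqref{eq:reduced_eqn} coincides with Fenichel's reduced flow computed in any local graph representation of $S_n$.
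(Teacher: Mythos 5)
Your proposal is correct and follows essentially the same route as the paper: the reduced flow is obtained by projecting $G(h,c,0)$ onto $T_pS_n$ along the fast direction $\mathrm{span}(N)$, with normal hyperbolicity ($\langle\nabla f,N\rangle\neq0$ on $S_n$) guaranteeing that this oblique splitting is well defined. The paper simply cites \cite[Proposition~3.4]{jelbart2019two} for this projection, whereas you make it self-contained via the Cramer's-rule identity $\det(N\mid T)=\langle\nabla f,N\rangle$ for $T=(-D_cf,D_hf)^{\top}$, which correctly reproduces \eqref{eq:reduced_eqn}.
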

	
	\begin{proof}
		The expression in \eqref{eq:reduced_eqn} is obtained by projecting the leading order perturbation vector $G(h,c,0)$ at each point $p \in S_c \setminus \{(0,0)\}$ onto its component in the corresponding tangent space $T_pS_c$; see \cite[Proposition 3.4]{jelbart2019two}. \wm{See also \cite{fenichel1979geometric,Goeke2014,wechselberger2018geometric} for similar methods applicable in the} case that the dimension $n$ is  greater than two.
	\end{proof}
	
	\begin{comment}
	\begin{rem}
	Lemma \ref{lem:reduced} applies only for normally hyperbolic submanifolds of the critical manifold $S$. There is no well-defined reduced problem on the degenerate line $S_h$, which by expression \eqref{eq:evs_R1} is everywhere nonhyperbolic.
	\end{rem}
	\end{comment}
	
	%To obtain the reduced problem, we project the perturbation $G(h,c,0)$ at each $p\in S_c$ onto its component in the tangent space $T_pS_c$. This can be done directly using the formula (3.19) from \cite{jelbart2019two}, and by doing so we obtain
	%%%
	\begin{comment}
	\begin{equation}\label{eq:reduced_R11}
	\begin{pmatrix}
	\dot h \\
	\dot c
	\end{pmatrix}
	=
	\left[\frac{1}{\lambda_h(c)}\det (N|G)
	\begin{pmatrix}
	-D_cf \\
	D_hf
	\end{pmatrix}\right]
	\bigg|_{h=\epsilon=0}
	=
	\begin{pmatrix}
	0 \\
	- \mathfrak J^+_{\rm SERCA}(c)
	\end{pmatrix},
	\end{equation}
	\
	where the overdot notation denotes differentiation with respect to slow time $\tau$, and the determinant $\det (N|G)$ is taken with respect to the matrix with columns $N(h,c)$ and $G(h,c,0)$.
	\end{comment}
	%%%
	We obtain an expression for the reduced problem on $S_c \setminus \{0,0\}$ directly via equation \eqref{eq:reduced_eqn}, which gives
	\begin{equation}\label{eq:reduced_R1}
		\begin{pmatrix}
			\dot h \\
			\dot c
		\end{pmatrix}
		=
		\begin{pmatrix}
			0 \\
			- \mathfrak J^+_{\text{\rm SERCA}}(c)
		\end{pmatrix} .
	\end{equation}
	In particular, $- \mathfrak J^+_{\text{\rm SERCA}}(c) < 0$ for all $c >0$ and $- \mathfrak J^+_{\text{SERCA}}(0) = 0$; the reduced flow on $S_c$ is toward the origin. The origin itself can be considered as a non-hyperbolic equilibrium for the reduced problem extended to all of $S_c$; see Figure \ref{SingLimFig}.

	\
	
	The preceding analysis for both layer and reduced problems implies the following result %concerning the dynamics 
	for $0 < \epsilon \ll 1$, which follows by classical results due to Fenichel \cite{fenichel1979geometric}.
	
	\begin{lemma}\label{lem:slow_manifolds_R1}
		There exists an $\epsilon_0>0$ such that for all $\epsilon\in(0,\epsilon_0]$, compact submanifolds of $S_c$ %$\{(0,c):c\in[c_-,c_+]\}\subset S_c$ 
		perturb to $O(\epsilon)$-close locally invariant \wm{one-}dimensional slow manifolds% with graph representation
		\[
		S_{c,\epsilon}=\left\{(\tilde{\nu}(c,\epsilon),c):c\in[c_-,c_+]\right\},
		\]
		where $\tilde \nu(c,\epsilon)=O(\epsilon)$, and $c_\pm$ are positive constants satisfying $c_+ > c_- > 0$. The leading order flow on each $S_{c,\epsilon}$ is governed by the reduced problem \eqref{eq:reduced_R1}.
	\end{lemma}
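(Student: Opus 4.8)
The plan is to recognise this statement as a direct application of Fenichel's theorem on the persistence of normally hyperbolic invariant manifolds, carried out in the coordinate-independent setting of \cite{wechselberger2018geometric,jelbart2019two}. The only preliminary work is to isolate a compact, uniformly normally hyperbolic piece of $S_c$ to which the theorem applies, and then to transfer the conclusions back to the original $(h,c)$ coordinates.

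First I would fix positive constants $c_+ > c_- > 0$ and restrict attention to the compact submanifold $\mathcal{S}_c^0 := \{(0,c) : c \in [c_-, c_+]\} \subset S_c$. The reason for taking $c_- > 0$ is that the origin $(0,0)$ is a degenerate (non-hyperbolic) point of the critical manifold, as recorded beneath \eqref{eq:evs_R1}; bounding $c$ away from zero excludes this degeneracy. On $\mathcal{S}_c^0$ the nontrivial eigenvalue from \eqref{eq:evs_R1} satisfies $\lambda_c(c) = -\tau_{max}^{-1} c^4 \leq -\tau_{max}^{-1} c_-^4 < 0$, which is bounded away from zero uniformly on the compact interval $[c_-, c_+]$. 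Hence $\mathcal{S}_c^0$ is a compact, normally hyperbolic and attracting manifold of equilibria for the layer problem \eqref{eq:layer_R1}, the trivial eigenvalue $\lambda_0 \equiv 0$ corresponding to the tangent direction along $S_c$.

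Next I would invoke Fenichel's theorem. Since $S_c$ is a graph over the $c$-axis and the vector field in \eqref{eq:general_form} is a smooth perturbation of the layer problem in $\epsilon$ (indeed rational, by the expansion in \eqref{eq:main1}), the theorem supplies an $\epsilon_0 > 0$ and, for each $\epsilon \in (0, \epsilon_0]$, a locally invariant manifold $S_{c,\epsilon}$ that is $C^r$ and $O(\epsilon)$-close to $\mathcal{S}_c^0$. Because the unperturbed manifold is the graph $h = 0$ over $[c_-, c_+]$, the perturbed manifold remains $C^1$-close and therefore inherits a graph representation $h = \tilde\nu(c,\epsilon)$ with $\tilde\nu(c,\epsilon) = O(\epsilon)$, giving precisely the asserted form of $S_{c,\epsilon}$. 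The \emph{local} invariance is the usual caveat that trajectories may leave $S_{c,\epsilon}$ only through its boundary $c \in \{c_-, c_+\}$. The leading-order flow claim then follows immediately: restricting the full system to $S_{c,\epsilon}$ and letting $\epsilon \to 0$ recovers the reduced problem \eqref{eq:reduced_R1} computed via Lemma \ref{lem:reduced}, so the dynamics on $S_{c,\epsilon}$ is an $O(\epsilon)$ perturbation of the reduced flow.

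The main obstacle, and the only genuinely non-routine point, is justifying the use of Fenichel's theorem when the system is not in standard slow-fast form. I would address this by emphasising that normal hyperbolicity is a coordinate-independent, geometric property of the invariant manifold $\mathcal{S}_c^0$ under the layer flow, and that the persistence result of \cite{fenichel1979geometric}, as reformulated for general problems of the form \eqref{eq:general_form} in \cite{wechselberger2018geometric,jelbart2019two}, depends only on this geometric hypothesis together with compactness. Both have been verified above, so the classical conclusions apply verbatim despite the absence of a globally defined slow variable.
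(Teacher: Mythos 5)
Your proposal is correct and follows essentially the same route as the paper, which simply notes that the result follows from the preceding eigenvalue and reduced-flow computations together with classical Fenichel theory \cite{fenichel1979geometric}; your additional care in bounding $c_->0$ away from the degenerate point $(0,0)$ and in justifying the coordinate-independent applicability of Fenichel's theorem matches the intent of the paper's (brief) argument.
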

	
	%\begin{proof}
	%	Fenichel theory \cite{fenichel1979geometric}.
	%\end{proof}
	
	It remains to understand the dynamics near the degenerate line $S_h$. For this we must look in regime (R2). %Looking at a rescaling of system \eqref{eq:main2} in R2 allows to understand the dynamics near $S_h$, but is not enough to prove the existence of relaxation oscillation linking regime R1 and R2. To do so a blow up analysis is carried in the appendix.

	% section singular_limit_analysis_1 (end)
	
	\subsection{\wm{Multiple time-scale} analysis in regime (R2)}
	\label{ssec:singular_limit_analysis_R2}

	We now consider the dynamics in regime (R2), where $c = O(\sqrt{\epsilon})$. \revsj{We work in the rescaled coordinates $(h,c) = (h,\sqrt{\epsilon} C)$ and define
	%In order to avoid fractional powers, 
	%We set
	\[
	\delta := \sqrt{\epsilon} 
	\]
	for simplicity, i.e.,~in order to avoid fractional exponents.
%	\wm{
%		Defining a rescaled variable $C=O(1)$ via {\color{red}{This looks like the coordinate change above when (R1) and (R2) were defined - surely we can do this once only. Also, would it be easier to understand if we defined $c=\sqrt{\epsilon} C$ and then say that we write $\delta=\sqrt{\epsilon}$ for simplicity? Otherwise the introduction of $\delta$ just looks arbitrary.}}
%		$$
%		c=\delta C\,,\qquad\mbox{where }\delta := \sqrt{\epsilon}
%		$$
%		and 
	Rewriting system \eqref{eq:general_form} in the new rescaled coordinates on an} `intermediate-slow' time-scale $t_1 = \epsilon^{3/2} t = \delta^3 t$ \revsj{yields} % in system  \eqref{eq:general_form},}
	%applying the time rescaling $dt=\delta^{-3}d\tilde t$, 
%	we obtain 
	%the following system from \eqref{eq:main2}, 
	%which describes the dynamics in regime (R2),
	\begin{equation}
		\label{eq:main_R2}
		\begin{pmatrix}
			h' \\
			C'
		\end{pmatrix}
		=
		\tilde N(h,C) \tilde f_0(h,C) + \delta \tilde G(h,C,\delta), \qquad 0 < \delta \ll 1 ,
	\end{equation}
	%\begin{equation}\label{eq:main_R2}
	%	\begin{split}
	%		h' &= \delta \tilde g(h,C) = - \frac{\delta}{\tau_h(C)} \left(h - h_\infty (C) \right) , \\
	%		C' &= \tilde f(h,C,\delta) = \tilde f_0(h,C) + \delta \tilde f_{\rm rem}(C,h,\delta) ,
	%	\end{split}
	%\end{equation}
	where
	\begin{equation}
		\tilde N(h,C) =
		\begin{pmatrix}
			0 \\
			1
		\end{pmatrix},
		\qquad
		\tilde f_0(h,C) = A_{\text{\text{IPR}}} \gamma c_t C^4 h - A_{\text{SERCA}} \left(C^2 - K \gamma^2 c_t^2 \right) ,
	\end{equation}
	%{\color{red}{VK: I could not reproduce the SERCA term here. I needed $A_{\text{SERCA}}=\frac{K \gamma^2V_s}{K_s^2}$ to make it work. Did I miss a simplification?}}
	and
	\begin{equation}
		\label{eq:K2_rhs}
		\tilde G(h,C,\delta) =
		\begin{pmatrix}
			\tilde g(h,C) \\
			\tilde f_{\rm rem}(C,h,\delta)
		\end{pmatrix}
		=
		\begin{pmatrix} 
			- \tau_h(C)^{-1} \left(h - h_\infty (C) \right)  \\
			- \left(A_{\text{\text{IPR}}} (1 + \gamma) C^4 h +2 A_{\text{SERCA}} c_t \right) C +  O\left(\delta\right) 
		\end{pmatrix} .
	\end{equation}
	In the above we defined
	\[
	A_{\text{\text{IPR}}} = \left(\frac{k_{\text{IPR}} p^2}{k_\beta K_c^4 K_p^2}\right) , \qquad 
	A_{\text{SERCA}}=\frac{V_s}{K_s^2},
	%A_{\text{SERCA}} = \frac{1}{K_s^2} , 
	%\qquad
	%h_\infty(C) = \frac{\hat K_h^4}{\hat K_h^4 + C^4} ,
	\]
	%with
	%\begin{equation}
	%\label{eq:K2_rhs}
	%\begin{split}
	%\tilde g(h,C) &= - \frac{1}{\tau_h(C)} \left(h - h_\infty (C) \right) , \\
	%\left(\frac{C^4 p^2 c_t \gamma \kipr}{k_\beta K_c^4 K_p^2}\right) \left[h-\left(\frac{k_\beta V_s K_c^4 K_p^2 }{C^4 p^2 c_t \gamma \kipr K_s^2}\right) \left(C^2 - \gamma ^2 c_t^2 K \right)\right],
	%\tilde f_{\rm rem}(h,C,\delta) &= - \left(A_{\text{\text{IPR}}} (1 + \gamma) C^4 h +2 A_{\text{SERCA}} c_t \right) C + O\left(\delta %, \delta C^2
	%\right) .
	%\end{split}
	%\end{equation}
	and permitted a slight abuse of notation by letting the prime ($'$) denote differentiation with respect to the new \wm{intermediate-slow  time $t_1$}. In writing \eqref{eq:K2_rhs}, we also appeal to the fact that $\mathfrak h_\infty(\delta C, \delta^2) = h_\infty(C)$. Finally, note that $\tilde f_0(h,C)$ is just the leading order term in an expansion in $\delta$, and that in this regime, % this expression has contributions from \text{IPR} as well as SERCA processes, i.e.~
	IPR and SERCA processes compete at leading order. % in this regime.
	
	\begin{rem}
		System \eqref{eq:main_R2} is in the so-called standard form for slow-fast systems. We present it here in the general form \eqref{eq:general_form}, and 
		%We present the system \eqref{eq:main_R2} in the general form \eqref{eq:general_form}. Note that this system is also in the standard form \eqref{eq:stnd_form}
		%despite being in the standard form \eqref{eq:stnd_form}
		%for slow-fast systems in the analysis to follow, we
		proceed via the same (more general) approach
		%In the following, however, we consider the system \eqref{} as being in the general form
		%\[
		%\begin{pmatrix}
		%h' \\
		%c'
		%\end{pmatrix}
		%=
		%\tilde N(h,c) \tilde f_0(h,c) + \delta \tilde G(h,c,\delta),
		%\]
		%with
		%\[
		%\tilde N(h,c) =
		%\begin{pmatrix}
		%0 \\
		%1
		%\end{pmatrix},
		%\qquad
		%\tilde G(h,c,\delta) =
		%\begin{pmatrix}
		%\tilde g(h,C) \\
		%\tilde f_{\rm rem}(C,h,\delta)
		%\end{pmatrix},
		%\]
		%and present an analysis in the (more general) approach
		adopted in Section \ref{ssec:singular_limit_analysis_R1}. Our reasons for doing so are three-fold: (i) consistency with earlier sections is maintained; (ii) there are no additional technical difficulties, and (iii) this approach illustrates the relationship between the theory developed for standard form problems and its non-standard form generalisation.
	\end{rem}
	%and can be rewritten the following more convenient form:
	%\begin{equation}\label{eq:main_R2}
	%	\begin{split}
	%		h' &= -\delta \varphi_1(C) \left( h - \varphi_2(C)\right) , \\
	%		C' &= \varphi_3(C)\left(h-\zeta(C)\right)+ \delta \tilde f_{\rm rem}(h, C, \delta),
	%	\end{split}
	%\end{equation}
	%where $\varphi_i(C)$, $i=1,2,3$ are positive functions
	%\[
	%	\varphi_1(C) = \frac{C^4+K_\tau^4}{K_\tau^4 \tau_{\rm max}}, \qquad
	%	\varphi_2(C)= \frac{K_h^4}{C^4+K_h^4}, \qquad
	%	\varphi_3(C) = \frac{C^4 p^2 c_t \gamma \kipr}{k_\beta K_c^4 K_p^2},
	%\]
	%\textcolor{red}{\textbf{[the first two are simply related to $\tau_h(C)$ and $h_\infty(C)$; write it in terms of those]}} and
	%\begin{equation}\label{eq:manifold_R2}
	%	\zeta(C)=\frac{A}{C^4}(C^2-\gamma^2c_t^2 K), \qquad \text{where} \qquad A= \frac{k_\beta V_s K_c^4 K_p^2 }{p^2 c_t \gamma \kipr K_s^2}.
	%\end{equation}
	
	Setting $\delta = 0$ in \eqref{eq:main_R2} yields the layer problem
	\begin{equation}
		\label{eq:layer_R2}
		\begin{pmatrix}
			h' \\
			C'
		\end{pmatrix}
		=
		\tilde N \tilde f_0(h,C) 
		=
		\begin{pmatrix}
			0 \\
			1
		\end{pmatrix}
		\left(A_{\text{\text{IPR}}} \gamma c_t C^4 h - A_{\text{SERCA}} \left(C^2 -  K \gamma^2 c_t^2 \right)\right) ,
	\end{equation}
	%\begin{align}\label{eq:layer_R2}
	%	\begin{array}{lcl}
	%		h'=0, \\
	%		C'= A_{ipr} \gamma c_t C^4 h - A_{serca} \left(C^2 - \gamma ^2 c_t^2 K \right) ,
	%	\end{array}
	%\end{align}
	%obtained by setting $\epsilon = 0$ in \eqref{}.
	which has critical manifold 
	\begin{equation}
		\label{eq:S_R2}
		\mathcal S = \left\{\left(\zeta(C), C \right) : C \geq 0 \right\} , \qquad 
		\zeta(C) = \frac{A_{\text{SERCA}}}{A_{\text{IPR}} \gamma c_t C^4}(C^2-K \gamma^2c_t^2) ,
	\end{equation}
	%shown in Figure \ref{fig:manifold_R2}, 
	%where %we have defined
	%\begin{equation}
	%\label{eq:manifold_R2}
	%\zeta(C) = \frac{A_{SERCA}}{A_{\text{IPR}} \gamma c_t C^4}(C^2-\gamma^2c_t^2 K) ,
	%\end{equation}
	see Figure \ref{fig:manifold_R2}. The non-trivial eigenvalue along $\mathcal S$ is given by the Lie derivative expression
	\begin{equation}
		\label{EV_R2}
		\lambda_{\mathcal S}(C) = \left\langle \nabla f, N \right\rangle \big|_{\mathcal S} =  D_C\tilde f_0(\zeta(C), C) = \frac{2 A_{\text{SERCA}}}{C}\left(C^2-2 K\gamma^2 c_t^2\right) .
	\end{equation}
	The manifold $\mathcal S$ has a fold point at
	\begin{equation}
		\label{eq:Fold}
		F = (\zeta(C_F),C_F) = \left(\frac{A_{\text{SERCA}}}{4 K \gamma^3 c_t^3 A_{\text{IPR}}}, \sqrt{2 K}\gamma c_t  \right),
	\end{equation}
	where $\lambda_{\mathcal S}(C_F)=0$ and the following nondegeneracy conditions are satisfied:
	\begin{equation}\label{eq:fold_nondegeneracy}
		%\begin{split}
		D_C^2\tilde f_0(\zeta(C_F), C_F) = 4 A_{\text{SERCA}} > 0, \ \ \ 
		D_h\tilde f_0(\zeta(C_F), C_F) = 4 A_{\text{IPR}} K^2 \gamma^5 c_t^5 >0.
		%\end{split}
	\end{equation}
	The first inequality in \eqref{eq:fold_nondegeneracy} implies that $\mathcal S=\mathcal S_a\cup\{F\}\cup \mathcal S_r$, where
	\[
	\mathcal S_a=\left\{(\zeta(C),C):0<C<C_F\right\}, \qquad \mathcal S_r=\left\{(\zeta(C),C):C>C_F\right\},
	\]
	and $\mathcal S_a$ (resp.~$\mathcal S_r$) is normally hyperbolic and attracting (resp.~repelling), as sketched in Figure \ref{fig:manifold_R2}.

	\begin{comment}
	\begin{rem}
	Recall by Remark \ref{rem_contact} that normal hyperbolicity is lost at points of tangency between the layer flow and the critical manifold. The loss of normal hyperbolicity at a regular fold point $F \in \mathcal S$ in particular is due to a \textit{quadratic} tangency with between the layer flow and the critical manifold $\mathcal S$. For problems given in the general form \eqref{eq:general_form}, such points are more generally referred to as \textit{order one contact points}. These are identified as points satisfying $\lambda_{\mathcal S}(F) = 0$, together with nondegeneracy conditions which reduce to those in \eqref{eq:fold_nondegeneracy} in the case of standard form problems \eqref{eq:stnd_form} (i.e. in the case of a regular fold); see \cite{jelbart2019two} for the dimension $n=2$ case, and \cite{wechselberger2018geometric} for the general case.
	\end{rem}
	\end{comment}

	\begin{figure}[t!]
		\centering
		%	\begin{subfigure}[b]{0.48\linewidth}
		\includegraphics[scale=0.35]{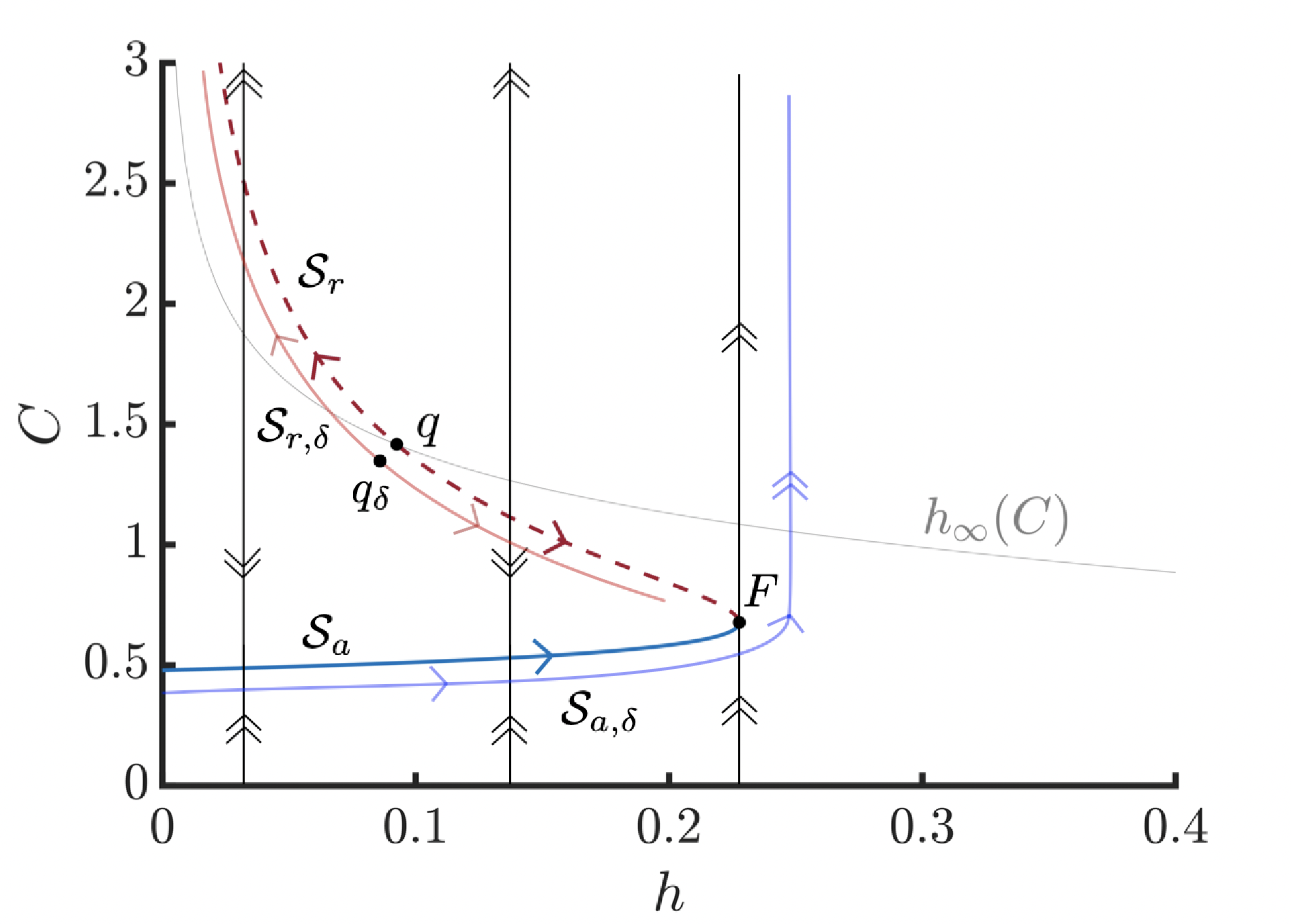}
		%		\caption{\textbf{[Old one]}}
		%		\end{subfigure}
		%		\begin{subfigure}[b]{0.48\linewidth}
		%		\includegraphics[width=\linewidth]{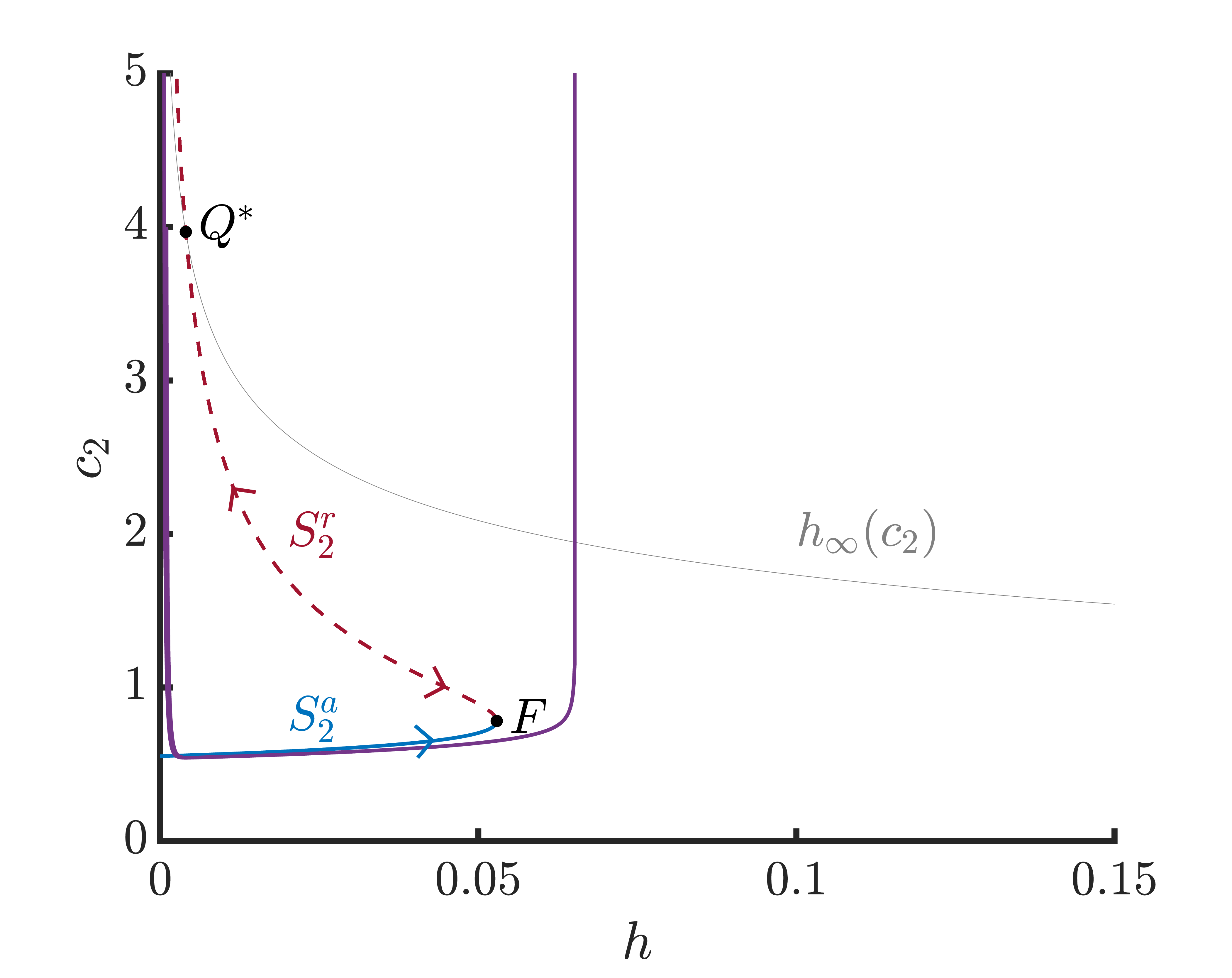}
		%		\caption{\textbf{[New one]}}
		%		\end{subfigure}
		\caption{Sketch of the dynamics in regime (R2). The critical manifold $\mathcal S=\mathcal S_a\cup\{F\}\cup \mathcal S_r$ is shown with $\mathcal S_a$ in blue and $\mathcal S_r$ in dashed red. The fold point $F$, equilibrium $q$ and $h-$nullcline (in grey) are also indicated. Perturbed slow manifolds $\mathcal S_{a,\delta}$ and $\mathcal S_{r,\delta}$ are shown in shaded blue and red respectively, with the attracting slow manifold $\mathcal S_{a,\delta}$ extending through the neighbourhood of the fold point $F$ as described by Lemma \ref{lem:slow_manifolds_R2}.}
		\label{fig:manifold_R2}
	\end{figure}

	Now consider the reduced problem on the manifold $\mathcal S$, \wm{which describes the leading order dynamics on the infra-slow time-scale $\tau_1 = \delta t_1 = \epsilon^2 t$}. As in Section \ref{ssec:singular_limit_analysis_R1}, we use the formula \eqref{eq:reduced_eqn} to \wm{derive} the reduced problem,
	\begin{equation}\label{eq:reduced_R2}
		\begin{pmatrix}
			\dot h \\
			\dot C
		\end{pmatrix}
		=
		\left[\frac{\det (\tilde N| \tilde G)}{\langle \nabla \tilde f_0, \tilde N \rangle}
		\begin{pmatrix}
			-D_c \tilde f_0 \\
			D_h \tilde f_0
		\end{pmatrix}\right]
		\bigg|_{\mathcal S} 
		=
		\begin{pmatrix}
			1 \\
			-\frac{D_h\tilde f_0(\zeta(C),C)}{\lambda_{\mathcal S}(C)}
		\end{pmatrix}
		\tilde g(\zeta(C),C),
	\end{equation}
	where by another (slight) abuse of notation, $(\dot{\ })$ refers to differentiation with respect to %a new \wm{infra-slow time-scale VK: removed this since it seems redundant given definition of infra-slow just above.
	the infra-slow time-scale $\tau_1$. Explicitly, we obtain the system
	%\begin{equation}\label{eq:reduced_R2}
	%\begin{split}
	%\dot h &= - \frac{1}{\tau_h(C)} \left(\zeta(C) - h_\infty (C) \right) , \\
	%\dot C &= \frac{A_{ipr} \gamma c_t  C^5}{2 A_{serca} \tau_h(C) (C^2 - 2 K \gamma^2 c_t^2)} \left( \zeta(C) - h_\infty(C)\right). 
	%\end{split}
	%\end{equation}
	\begin{equation}\label{eq:reduced_R2_explicit}
		\begin{pmatrix}
			\dot h \\
			\dot C
		\end{pmatrix}
		=
		\begin{pmatrix}
			- \tau_h(C)^{-1} \\
			\frac{A_{\text{IPR}} \gamma c_t  C^5}{2 A_{\text{SERCA}} \tau_h(C) (C^2 - 2 K \gamma^2 c_t^2)}
		\end{pmatrix}
		\left(\zeta(C) - h_\infty (C) \right) .
	\end{equation}
	
	\begin{rem}
		\label{rem:time-scales}
		In total we have identified \emph{four} \wm{distinct} time-scales in system \eqref{eq:general_form} that are involved in our analysis \wm{(fast, slow, intermediate-slow, infra-slow)}:
		\[
		t ,%= O(1), 
		\qquad \tau = \epsilon t, % = O (\epsilon), 
		\wm{
			\qquad t_1 = \delta^3 t = \epsilon^{3/2} t , %O (\epsilon^{3/2}), 
			\qquad \tau_1 = \delta t_1 = \epsilon^2 t . %O(\epsilon^2).
		}
		\]
		%We prefer to interpret the \wm{intermediate-slow time-scale $t_1$} as a necessary intermediary between time-scales, rather than identify it with a particular physiological process.
		%	\WM{not sure if I understand that statement? Is it necessary here?}
	\end{rem}
	
	\begin{comment}
	\begin{rem}
	Equation \eqref{eq:reduced_R2} shows that the `usual' expression for the reduced problem associated with standard form problems \eqref{eq:stnd_form} (see, e.g. \cite{Kuehn2015}) is obtained directly from the more general expression \eqref{eq:reduced_eqn}.
	\end{rem}
	\end{comment}
	
	System \eqref{eq:reduced_R2_explicit} can have up to three equilibria $(\varphi(C_{\ast}), C_{\ast}) \in \mathcal S$ in the physiological domain $h, C \geq 0$. Their locations can be determined by solving the equation
	\[
	\zeta(C) = h_\infty (C) ,
	\]
	which reduces to the problem of identifying positive, real-valued roots of the cubic equation
	\begin{equation}\label{eqs:reduced_equilibria_R2b}
		P(m) = \alpha_3 m^3 + \alpha_2 m^2 + \alpha_1 m + \alpha_0 = 0,
	\end{equation}
	obtained after setting $m=C^2$. 
	\wm{
		The coefficients $\alpha_i$, $i=0,1,2,3$ are given by}
	\begin{equation}
		\begin{aligned}
			\alpha_3 &= \frac{A_{\text{SERCA}}}{\gamma c_t A_{\text{\text{IPR}}}}, && \alpha_2 = - K_h^4 - \frac{A_{\text{SERCA}}}{ A_{\text{\text{IPR}}}} K \gamma c_t , \\
			\alpha_1 &= \frac{K_h^4 A_{\text{SERCA}}}{\gamma c_t A_{\text{\text{IPR}}}} , && \alpha_0 = - \frac{K K_h^4 \gamma c_t A_{\text{SERCA}}}{A_{\text{\text{IPR}}}} .
		\end{aligned}
	\end{equation}
	%see Appendix \ref{app:equilibria} for expressions for $\alpha_i$, $i = 0,1,2,3$. 
	%Since the expression for $\zeta(C)$ in \eqref{eq:S_R2} depends on the system parameters $(p,c_t)$, 
	Note that the number of real-valued roots for equation \eqref{eqs:reduced_equilibria_R2b} depends on the system parameters $p$ and $c_t$; \wm{see Figure~\ref{fig:cusp}, which identifies a cusp structure in $(p,c_t)$-space.}
	%\WM{Sam: if available, present cusp figure; see also caption Figure 8}
	%\WM{[NZ] In Fig 7, which panel do you prefer: left or right? VK: Left panel looks best to me.}
	
	\begin{figure}[t!]
		\centering
		\includegraphics[scale=0.8]{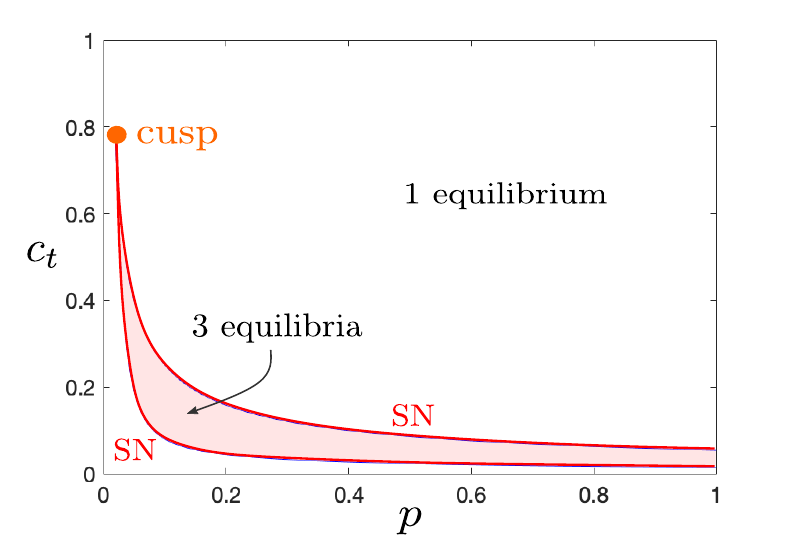}%\quad
		\caption{\wm{Number of equilibria in $(p,c_t)$-parameter space. The region with three equilibria  is bounded by two codimension-one saddle-node branches emanating from a codimension-two cusp point at $(p,c_t)\sim (0.02,0.8)$; outside this region there exists a unique equilibrium. Parameter values are as in Table \ref{tab:params_post_scaling}}.}
		\label{fig:cusp}
	\end{figure}

	Of course, the location and stability properties of the corresponding equilibria for system \eqref{eq:reduced_R2} will also depend on these parameters. We are interested %in the present work 
	{here in the case in which} system \eqref{eq:reduced_R2} has a unique, unstable equilibrium $q \in \mathcal S_r$. % This is made explicit in the following.

	\begin{assumption}
		\label{ass:1}
		The parameters $p$, $c_t$ are chosen so that the cubic polynomial $P(m)$ in \eqref{eqs:reduced_equilibria_R2b} has a unique positive real-valued root, such that system \eqref{eq:reduced_R2} has a unique equilibrium $q = (\varphi(C_{\ast}), C_{\ast}) \in \mathcal S$. Moreover, we require that 
		\begin{equation}\label{eq:stability_bounds}
			\frac{2 A_{\text{IPR}} K_h^4 \gamma c_t}{A_{\text{SERCA}}} > \frac{1}{C_\ast^8}\left(K_h^4 + C_\ast^4 \right) \left(C_\ast^2 - C_F^2 \right) > 0 ,
		\end{equation}
		implying that $q$ is unstable on $\mathcal S_r$.
	\end{assumption}
	
	The rightmost inequality in equation \eqref{eq:stability_bounds} implies that $C_\ast > C_F$, so that $q \in \mathcal S_r$. The leftmost inequality ensures that $D_c \dot C|_{C=C_\ast} > 0$, so that $q$ is unstable as an equilibrium on $\mathcal S$. %, which follows from the requirement that $D_c \dot C|_{C=C_\ast} > 0$. 
	\begin{comment}
	Explicitly, a little algebra yields the requirement for instability
	\begin{equation}
	\label{eq:stability_assumption}
	%\begin{split}
	D_c \dot C \big|_{C=C_\ast} 
	%= \frac{A_{ipr} C}{2 A_{serca} \tau_h(C) (C^2 - 2 K \gamma^2 c_t^2)} \left(D_c\zeta(C_\ast) - D_c h_\infty(C_\ast) \right) \\
	= \frac{1}{\tau_h(C_\ast)} \left(\frac{2 A_{\text{IPR}} K_h^4 C_\ast^8 \gamma c_t}{ A_{\rm SERCA} (K_h^4 + C_\ast^4) (C_\ast^2 - C_F^2)} - 1 \right) > 0 .   
	%\end{split}
	\end{equation}
	\end{comment}
	%
	%Under Assumption \ref{ass:1}, the equilbrium $q \in \mathcal S_r$ as depicted in Fig. \ref{}. Hence, it is either saddle type, or node type. We are nterested in the latter case, which obtains in the case that
	%
	%From this we obtain the following.
	%
	%\begin{assumption}
	%The parameters $p, c_t$ are chosen so that  the equilibrium $q$ is an unstable node. Explicitly, $p, c_t$ are such that the following condition holds:
	%\begin{equation}
	%\label{eq:stability_bounds}
	%\frac{2 A_{ipr} K_h^4 \gamma c_t}{A_{serca}} > \frac{1}{C_\ast^8}\left(K_h^4 + C_\ast^4 \right) \left(C_\ast^2 - C_F^2 \right) > 0.
	%\end{equation}
	%\end{assumption}
	%
	%In fact the rightmost inequality in \eqref{eq:stability_bounds} follows from Assumption \ref{ass:1}, since $q \in \mathcal S_r$ implies $C_\ast > C_F$.
	Note that Assumption \ref{ass:1} is satisfied for the parameter values in Table \ref{tab:params_post_scaling}, for which we have a unique equilibrium $q \approx (0.074,1.48) \in \mathcal S_r$ satisfying $C_\ast \approx 1.48 > C_F \approx 0.68$ and 
	\[
	\frac{2 A_{\text{IPR}} K_h^4 \gamma c_t}{A_{\text{SERCA}}} \approx 17.01 > \frac{1}{C_\ast^8}\left(K_h^4 + C_\ast^4 \right) \left(C_\ast^2 - C_F^2 \right) \approx 0.391 > 0 .
	\]
	Assumption \ref{ass:1} also implies 
	%also implies satisfaction of 
	\wm{that the regularity conditions
		\begin{equation}\label{eq:fold_regularity_R2}
			\tilde g(\zeta(C_F),C_F)\neq0, \qquad D_C \tilde g(\zeta(C_F),C_F)\neq0 ,
		\end{equation}
		at the fold point $F$ are fulfilled. 
		
		Note that} the reduced problem \eqref{eq:reduced_R2} is not defined at $F$, since the flow undergoes a finite time blow-up (solutions reach $F$ in finite time). The conditions in \eqref{eq:fold_regularity_R2} imply that the reduced flow is oriented locally toward $F$, classifying the regular fold as a \emph{regular jump point} \cite{Kuehn2015}, where there is a transition from slow to fast.
	
	\
	
	The dynamics for $0 < \delta \ll 1$ in compact subdomains \rev{in} regime (R2) \rev{are} well-described by existing theory, and summarised in the following result.
	%The first covers the dynamics bounded away from the regular jump point $F$, where Fenichel theory implies that compact submanifolds of $S_{a/r}$ perturb to nearby slow manifolds.
	
	%\WM{Sam: following statement is for \eqref{eq:main_R2} in R2, right? You use $C$, not $c$!}
	\begin{lemma}\label{lem:slow_manifolds_R2}
		Given system \wm{\eqref{eq:main_R2}}
		%	\eqref{eq:main2} respectively \eqref{eq:main1} 
		and Assumption \ref{ass:1}, there exists $\delta_0>0$ such that for all $\delta\in(0,\delta_0]$ compact submanifolds %$\{(\zeta(C),C):C\in[C_-,C_+]\} \subset \mathcal S_a$ and $\{(\zeta(C),C):C\in[C_l,C_r]\} \subset \mathcal S_r$
		of $\mathcal S_a$ and $\mathcal S_r$ perturb to $O(\delta)$-close locally invariant slow manifolds of the form
		\[
		%	\begin{split}
		\mathcal S_{a,\delta} = \left\{(\zeta(C)+O(\delta),C):C\in[C_-,C_+]\right\}, \ \ 
		\mathcal S_{r,\delta} = \left\{(\zeta(C)+O(\delta),C):C\in[C_l,C_r]\right\},
		%	\end{split}
		\]
		respectively, where $C_\pm$ and $C_{l,r}$ are any positive constants satisfying $0<C_-<C_+<C_F$ and $C_F<C_l<C_r<\infty$. The leading order flow on $\mathcal S_{a/r,\delta}$ is given by the reduced flow on $\mathcal S_{a/r}$, and the reduced flow equilibrium $q \in \mathcal S_r$ perturbs to a nearby equilibrium $q_\delta \in \mathcal S_{r,\delta}$ of unstable node type.
		
		The attracting slow manifolds $\mathcal S_{a,\delta}$ extend through a neighbourhood of the regular jump point, exiting along the fast flow which is $O(\delta^{2/3})$-close to the fast fiber $\{(\zeta(C_F),C):C>C_F\}$; see Figure \ref{fig:manifold_R2}.
	\end{lemma}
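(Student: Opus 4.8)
The plan is to split the statement into two parts: the persistence of the normally hyperbolic branches $\mathcal S_a$ and $\mathcal S_r$ (together with the reduced flow and the perturbed equilibrium $q_\delta$), which follows from classical Fenichel theory, and the extension of $\mathcal S_{a,\delta}$ through the regular jump point $F$, which requires the blow-up method. Since system \eqref{eq:main_R2} is already in standard slow--fast form (with $C$ fast and $h$ slow), and since the nondegeneracy conditions \eqref{eq:fold_nondegeneracy} guarantee that $\lambda_{\mathcal S}(C)$ in \eqref{EV_R2} is bounded away from zero on any compact subset of $\mathcal S_a$ or $\mathcal S_r$, the hypotheses of Fenichel's theorem are met on such subsets.

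First I would apply Fenichel's theorem \cite{fenichel1979geometric} directly to the compact normally hyperbolic submanifolds indexed by $C\in[C_-,C_+]\subset(0,C_F)$ and $C\in[C_l,C_r]\subset(C_F,\infty)$. This yields locally invariant manifolds $\mathcal S_{a,\delta}$ and $\mathcal S_{r,\delta}$ that are $O(\delta)$-close to $\mathcal S_a$ and $\mathcal S_r$, expressible as graphs $h=\zeta(C)+O(\delta)$, whose leading-order flow is the reduced flow \eqref{eq:reduced_R2}; attractivity (resp.\ repulsivity) is inherited from the sign of $\lambda_{\mathcal S}$. To locate the perturbed equilibrium, note that the equilibria of \eqref{eq:main_R2} lie on the exact $h$-nullcline $h=h_\infty(C)$ (where $\tilde g=0$), on which the remaining condition $\tilde f_0+\delta\tilde f_{rem}=0$ reduces at $\delta=0$ to $\zeta(C)=h_\infty(C)$, i.e.\ the unique reduced equilibrium $q$ of Assumption \ref{ass:1}. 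Because the corresponding root of $P(m)$ is simple, the implicit function theorem produces a nearby equilibrium $q_\delta$ lying on $\mathcal S_{r,\delta}$. To identify $q_\delta$ as an unstable node I would examine the Jacobian of \eqref{eq:main_R2} at $q_\delta$: at leading order its trace equals the fast rate $\lambda_{\mathcal S}(C_\ast)>0$ (since $q\in\mathcal S_r$), while its determinant is $O(\delta)$ with sign fixed by the reduced-flow instability $D_c\dot C|_{C=C_\ast}>0$ guaranteed by the left inequality in \eqref{eq:stability_bounds}. Hence both eigenvalues are positive, and since $\mathrm{tr}^2-4\det=\lambda_{\mathcal S}(C_\ast)^2+O(\delta)>0$ for small $\delta$, they are real and distinct, so $q_\delta$ is an unstable node.

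The main work, and the expected obstacle, is the extension of $\mathcal S_{a,\delta}$ through a full neighbourhood of the fold point $F$ in \eqref{eq:Fold}, where normal hyperbolicity is lost and Fenichel theory no longer applies. Here I would invoke the standard analysis of a regular fold: the nondegeneracy conditions \eqref{eq:fold_nondegeneracy} together with the regularity conditions \eqref{eq:fold_regularity_R2} (which, by Assumption \ref{ass:1}, hold at $F$ and orient the reduced flow toward $F$) classify $F$ as a regular jump point \cite{Kuehn2015}. Applying a blow-up transformation at $F$ desingularises the dynamics and allows one to track the attracting manifold through the blown-up fold region; matching the incoming branch of $\mathcal S_{a,\delta}$ to the outgoing fast fibre then yields the characteristic $O(\delta^{2/3})$ estimate for the distance between the exit trajectory and the fast fibre $\{(\zeta(C_F),C):C>C_F\}$ (see Figure \ref{fig:manifold_R2}). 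The technical care lies entirely in the blow-up and matching estimates, which I would defer to the detailed Appendix analysis; the normally hyperbolic statements above are comparatively routine.
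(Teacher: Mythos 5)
Your proposal is correct and follows essentially the same route as the paper: Fenichel theory for the normally hyperbolic branches (including the persistence of the unstable node $q_\delta$) and the Krupa--Szmolyan regular-fold/blow-up result for the extension of $\mathcal S_{a,\delta}$ past the jump point $F$ with the $O(\delta^{2/3})$ exit estimate. The paper's proof is a two-line citation of exactly these two results; your additional details (the implicit-function-theorem location of $q_\delta$ on the nullcline $h=h_\infty(C)$ and the trace/determinant computation confirming the unstable-node type) are accurate elaborations of what the cited theory delivers.
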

	
	\begin{proof}
		The dynamics bounded away from a neighbourhood of $F$, including the persistence of the unstable node $q_\delta$, is described by Fenichel theory \cite{fenichel1979geometric}. The dynamics near $F$ is described by \cite[Theorem 2.1]{Krupa2001a}.
	\end{proof}
	
	%The dynamics bounded away from a neighbourhood of $F$ are described by Fenichel theory. The dynamics near $F$ are described by the result in Krupa and Szmolyan \cite{Krupa2001a}.
	
	%\begin{theorem}\label{thm:jump}
	%	Given the system \eqref{eq:main1} respectively \eqref{eq:main2} and Assumption \ref{ass:1}. There exists $\tilde \delta_0>0$ such that for all $\delta\in(0, \tilde\delta_0]$ the attracting slow manifolds $\mathcal S_{a,\delta}$ extend through a neighbourhood of the regular jump point, exiting along the fast flow within a distance which is $O(\delta^{2/3})$-close to the fast fiber $\{(\zeta(C_F),C):C>C_F\}$.
	%\end{theorem}
	
	%\begin{proof}
	%	See Krupa and Szmolyan \cite{Krupa2001a}.
	%\end{proof}
	
	It follows that trajectories are either repelled to infinity or exponentially attracted to slow manifolds $\mathcal S_{a,\delta}$, after which they follow the slow flow up to the neighbourhood of the regular jump point $F$, before %where by Theorem \ref{lem:slow_manifolds_R2} they 
	leaving via the fast flow. Once on the fast flow, the global separation of slow and fast variables in regime (R2) prohibits the existence of a return mechanism on any compact domain. %; no cycles are possible in the regime (R2) alone. Hence, 
	Limit cycles for $0 < \delta \ll 1$ must traverse both regimes (R1) and (R2).
	%To observe a limit cycle as $\delta \to 0$, both regimes R1 and R2 are necessary.

	% section singular_limit_analysis_2 (end)

	\subsection{\revsj{Existence and uniqueness of the relaxation oscillations}}
	\label{ssec:statement_of_the_main_result}
	
	\revsj{We now use the results of the preceding sections in order to state an existence and uniqueness result for relaxation oscillations} 
%	We are now able to state our main result: existence of a limit cycle of relaxation type 
	in system \wm{\eqref{eq:general_form}}, in the $(p,c_t)-$parameter region specified by Assumption \ref{ass:1}. %{\color{red}{In light of the refocusing of the paper to highlight the procedure rather than the specific results for calcium, I think it might be helpful to think again about the title of this section and the sentence of introduction here. How about ``Existence and uniqueness of the relaxation oscillation" for a title, and introductory sentence along the lines of ``We now use the GSPT and the results of the previous subsections to prove the existence and uniqueness of a relaxation oscillation in system (...) in the parameter regime ....}}
	
	%A result pertaining to a relationship between the period of oscillations and the model parameter $\tau_{\rm max}$ is also given.
	
	%\WM{FYI, I have changed all references to (4.*) to (5.*) equations. Btw, I wonder if we should move this section to the beginning of section 5 (for discussion).}
	
	We consider system \wm{\eqref{eq:general_form}} in regime (R1), for which the fold point $F$ identified in regime (R2) `collapses' onto the point $Q_f = (h_F,0) = (\zeta(C_F),0)\in S_h$.
	%Heuristically, one can view the non-hyperbolicity of $S_h$ as a consequence of the contraction of $S_a$ onto $S_r$ as $\epsilon\to0$ \textcolor{red}{\textbf{[do side-by-side figures to illustrate this point?]}}.
	Having identified this point, we can construct the singular orbit % $\Gamma$ shown in Figure  by a concatenation 
	\[
	\Gamma=  \Gamma_c \cup \Gamma_h \cup \Gamma_l ,
	\]
	where
	\[
	\Gamma_h=\left\{(h,0):h\in[0,h_F] \right\}, \qquad \Gamma_c=\left\{(0,c):c\in[0,c_d] \right\},
	\]
	and $\Gamma_l$ is the (unique) heteroclinic orbit of the layer problem \eqref{eq:layer_R1} connecting $Q_f$ to the `drop point' $Q_d = (0, c_d) \in S_c$; see \wm{Figure~\ref{SingLimFig} (Right).} 
	%\ref{fig:singular_orbit_R1}. 
	It should be noted that because of curvature in the layer flow, existence of a drop point $Q_d \in S_c$ must be shown explicitly (recall Remark \ref{rem:return}). %; we defer the details to Lemma \ref{lem:map12} in the Appendix \ref{sec:poincare_map}.
	%\[
	%Q_f=(\zeta(C_F),0), \qquad Q_d = (h_d, 0), \qquad Q_0 = (0,0),
	%\]
	%see Figure \ref{SingLimFig} \textcolor{red}{\textbf{[Note that $\Gamma_l$ can be defined as the stable `pseudo-separatrix' emanting from $Q_f$, and that the drop point $Q_d$ should be defined more explicitly -- we might have to include a lemma to gurentee it's existence?..]}}.
	\begin{comment}
	\begin{figure}[t!]
	\centering
	%	\begin{subfigure}[b]{0.48\linewidth}
	\includegraphics[scale=0.25]{Images/fig_calcium_autocatalator_sing_lim.png}
	%		\caption{\textbf{[Old one]}}
	%		\end{subfigure}
	%		\begin{subfigure}[b]{0.48\linewidth}
	%		\includegraphics[width=\linewidth]{Images/fig_calcium_autocatalator_sing_lim_new.png}
	%		\caption{\textbf{[New one]}}
	%		\end{subfigure}
	\caption{The singular orbit $\Gamma=\Gamma_h \cup \Gamma_c \cup \Gamma_l$. Note that the flow indicated on $S_h$ is not formally defined, and included for illustrative purposes only; it represents the rightward motion along $\mathcal S_{a,\delta}$ in Figure \ref{fig:manifold_R2}, and helps to illustrate the three-time-scale structure of the oscillations. \wm{[show true relaxation oscillation, e.g., from Figure 3. Maybe merge with Figure 5?] }}
	\label{fig:singular_orbit_R1}
	\end{figure}
	\end{comment}
	
	\begin{theorem}\label{thm:main}
		Consider system \wm{\eqref{eq:general_form}} with $(p,c_t)$ fixed within the region specified by Assumption \ref{ass:1}. % and the parameter values in Table \ref{tab:params_post_scaling}, 
		There exists an $\epsilon_0>0$ such that for all $\epsilon\in(0,\epsilon_0)$, there exists a % locally unique  %and exponentially attracting 
		relaxation cycle $\Gamma_\epsilon$ which is $O(\epsilon^{1/3})-$close to the singular orbit $\Gamma$ in the Hausdorff distance as $\epsilon \to 0$. The relaxation cycle $\Gamma_\epsilon$ is exponentially attracting, with Floquet exponent bounded above by $-\kappa/\epsilon^2$ for some constant $\kappa>0$. \rev{Moreover, for any fixed $M > 0$ such that $\Gamma_\epsilon$ is contained within the ball $B(0,M)$, choosing $\epsilon_0>0$ sufficiently small guarantees that $\Gamma_\epsilon$ is the only limit cycle in $B(0,M)$.}
	\end{theorem}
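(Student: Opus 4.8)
The plan is to prove Theorem~\ref{thm:main} by constructing a Poincar\'{e} return map on a one-dimensional section transverse to the singular cycle $\Gamma$, and showing that this map is a well-defined uniform contraction whose unique fixed point is the relaxation cycle $\Gamma_\epsilon$. First I would fix a section $\Sigma$ transverse to the fast fibre $\Gamma_l$ in a neighbourhood of the drop point $Q_d \in S_c$, parametrised (say) by the $h$-coordinate. The return map $\Pi_\epsilon : \Sigma \to \Sigma$ is then written as a composition $\Pi_\epsilon = \Pi_{\mathrm{jump}} \circ \Pi_{(R2)} \circ \Pi_{\mathrm{corner}} \circ \Pi_{(R1)}$ of transition maps, each tracking the flow along one segment of $\Gamma$: the slow descent along $S_{c,\epsilon}$ (segment $\Gamma_c$), the passage through the degenerate corner at the origin where $S_c$ meets $S_h$, the infra-slow passage along $\mathcal S_{a,\delta}$ up to and through the fold $F$ (segment $\Gamma_h$), and the fast return $\Gamma_l$ from the jump point back to $\Sigma$.

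The individual transition maps are controlled as follows. The map $\Pi_{(R1)}$ is handled by Lemma~\ref{lem:slow_manifolds_R1}: orbits crossing $\Sigma$ near $Q_d$ are exponentially attracted to $S_{c,\epsilon}$ and transported down it by the reduced flow~\eqref{eq:reduced_R1} toward the origin. The map $\Pi_{(R2)}$ is handled by Lemma~\ref{lem:slow_manifolds_R2}: on $\mathcal S_{a,\delta}$ the orbit is carried by the infra-slow reduced flow~\eqref{eq:reduced_R2_explicit} up to the regular jump point $F$, exiting $O(\delta^{2/3}) = O(\epsilon^{1/3})$-close to the fast fibre through $F$; this Krupa--Szmolyan estimate is precisely what will produce the $O(\epsilon^{1/3})$ Hausdorff bound in the statement. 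The fast return $\Pi_{\mathrm{jump}}$ is governed by the layer problem~\eqref{eq:layer_R1}; because of curvature in the layer flow (Remark~\ref{rem:return}) one cannot read off the existence of a drop point from~\eqref{eq:fibers_R1} alone, so the global connection from $Q_f$ to $Q_d \in S_c$ must be established separately, which is the content of the appendix Lemma~\ref{lem:map12}.

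With the transition maps in hand, existence and local uniqueness of $\Gamma_\epsilon$ follow from the contraction mapping principle: I would show that $\Pi_\epsilon$ maps a suitable compact interval in $\Sigma$ into itself and is a uniform contraction, so that its unique fixed point defines the locally unique periodic orbit, with closeness to $\Gamma$ inherited from the $O(\epsilon^{1/3})$ error near the fold and the exponential closeness of the Fenichel manifolds elsewhere. For the stability claim I would estimate the Floquet exponent as the total transverse contraction accumulated around $\Gamma_\epsilon$, i.e.\ the integral of the nontrivial (transverse) eigenvalue ($\lambda_c$ along $S_{c,\epsilon}$ via~\eqref{eq:evs_R1} and $\lambda_{\mathcal S}$ along $\mathcal S_{a,\delta}$ via~\eqref{EV_R2}) together with the bounded contributions from the corner and fold passages. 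The dominant contribution comes from the strongly attracting slow manifolds, over which the orbit spends the longest (infra-slow) residence, and a careful accounting of this contraction yields the exponentially small Floquet multiplier and the stated exponent bound $-\kappa/\epsilon^2$.

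The hard part will be the passage $\Pi_{\mathrm{corner}}$ through the degenerate origin, where both $S_c$ and $S_h$ lose normal hyperbolicity and the singular limit is non-uniform, so that the two singular-limit descriptions (regimes (R1) and (R2)) must be matched. I would resolve this by a blow-up of the origin, introducing the rescaling $c = \sqrt{\epsilon}\, C$ of regime (R2) as one chart and verifying that the attracting manifold $S_{c,\epsilon}$, which carries $h \approx 0$, connects smoothly to $\mathcal S_{a,\delta}$ near $C = \gamma c_t \sqrt{K}$, where $\zeta(C) \to 0$. Establishing this matching, together with the global fast connection of Lemma~\ref{lem:map12}, is the crux of the argument; the remaining ingredients are either standard Fenichel and Krupa--Szmolyan theory or routine (if lengthy) computation.
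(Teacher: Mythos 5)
Your overall architecture --- a Poincar\'e return map built from Fenichel theory along $S_{c,\epsilon}$, the Krupa--Szmolyan jump-point estimate at $F$ (which is indeed the source of the $O(\epsilon^{1/3})$ bound), a flow-box/boundedness argument for the fast connection, the contraction mapping principle, and a blow-up to match regimes (R1) and (R2) --- is the same as the paper's. But there are two concrete gaps in how you treat the degenerate set, and they are exactly where the real work lies.

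First, the degeneracy is not confined to the corner at the origin: the \emph{entire} line $S_h$ is non-hyperbolic ($\lambda_h(h)\equiv 0$ by \eqref{eq:evs_R1}), and the singular cycle must \emph{exit} $S_h$ at $Q_f=(h_F,0)$, far from the origin. The point $Q_f$ is itself a degenerate equilibrium of the layer problem \eqref{eq:layer_R1} (since $c'\propto c^4h$ vanishes at $c=0$), so your $\Pi_{\mathrm{jump}}$ cannot simply hand the orbit from the fold exit at $c=O(\sqrt{\epsilon})$ to the layer flow; Lemma \ref{lem:map12} only controls the transit starting from a section at $c=\rho_1>0$ bounded away from $S_h$. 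The paper resolves this by a \emph{cylindrical} blow-up along all of $S_h$ (transformation \eqref{eq:cylindrical_bu}), which makes $l_h\setminus Q_3$ hyperbolic of saddle type (Lemma \ref{lem:K1_dynamics}) and yields the corner-type transition map $\pi_6$ near $(h_F,0)$ carrying orbits off the cylinder back to $c=O(1)$. Your proposed chart $c=\sqrt{\epsilon}\,C$ is in fact the family-rescaling chart of this cylindrical blow-up, but by framing it as a blow-up of the origin you miss the second degenerate passage at the exit. Second, even at the entry corner a single blow-up does not suffice: after the cylindrical blow-up the origin $Q_3$ remains a nilpotent singularity (all three eigenvalues zero), and the paper needs a \emph{successive spherical} blow-up of $Q_3$ to track the attracting centre manifold from $S_{c,\epsilon}$ onto the cylinder (the map $\pi_3$, Lemma \ref{lem:map34} and Appendix \ref{sec:spherical_blowup}). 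The ``smooth connection of $S_{c,\epsilon}$ to $\mathcal S_{a,\delta}$'' that you defer to verification is precisely the step that cannot be carried out in the rescaling chart alone; without the second blow-up your $\Pi_{\mathrm{corner}}$ is not defined. Your reading of the $O(\epsilon^{1/3})$ Hausdorff estimate and of the $-\kappa/\epsilon^2$ Floquet bound (dominant contraction accumulated over the $O(1)$ residence on the infra-slow timescale $\tau_1=\epsilon^2 t$) does agree with the paper.
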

	%
	%\WM{The Floquet multiplier estimate is correlated with the next Proposition 5.11, correct? We could/should combine these statements.}
	%
	Theorem \ref{thm:main} states existence \rev{and uniqueness} for the relaxation cycles observed in Figure \ref{fig:non_dim}. Note that the \revsj{$O(\delta)$ time-scale separation in regime (R2) (as opposed to the $O(\epsilon)$ time-scale separation in regime (R1))} %additional time-scale {\color{red}{which one? There were four timescales in the analysis.}}
	leads to a relaxation cycle $\Gamma_\epsilon$ which is $O(\epsilon^{1/3})$ from $\Gamma$, in contrast to the usual $O(\epsilon^{2/3})$ separation associated with two time-scale relaxation oscillations; see, e.g., \cite{Krupa2001b,Kuehn2015}. This is a consequence of the $O(\delta^{2/3}) = O (\epsilon^{1/3})$ separation near the fold in the regime (R2), as described in Lemma \ref{lem:slow_manifolds_R2}. The bound $-\kappa/\epsilon^2$ on the Floquet exponent also differs from the usual $-\kappa/\epsilon$ bound. This is a consequence of the total time spent in the vicinity of the attracting slow manifold $\mathcal S_{a,\delta}$, which is $O(1)$ on the \wm{infra-slow time-scale $\tau_1 = \epsilon^2 t$}. %\rev{Theorem \ref{thm:main} and the preceding GSPT analysis also reveals the role of particular flux terms in .} {\color{red}{This looks like an unfinished thought. I'm not sure this is the right place for this comment though. I don't think the role of the flux terms is clear from the Theorem and I think we want to make a bigger deal of this in a place where more applied readers are likely to find it. For instance, make a comment at the beginning of Section 5, as I have indicated above, then pick it up again in the Remark below and the Conclusions section.}}
	
	The proof of Theorem \ref{thm:main} utilises the so-called \textit{blow-up method} \cite{Dumortier1996} in the formulation of \cite{Krupa2001a,Krupa2001b}. \rev{Due to the length of the proof and the conceptual similarities with the analysis undertaken for the autocatalator problem in \cite{Gucwa2009}, this is deferred to Appendix \ref{app:proof_theorem}}. The main task is to resolve the degeneracy associated with the non-hyperbolic line $S_h$ and, in particular, the point $Q_o = (0,0)$. This can be achieved in a two-step process, by means of a cylindrical blow-up along $S_h$, and a second (successive) spherical blow-up necessary to resolve a persistent degeneracy stemming from the point $Q_o$. We refer the interested reader to Figures \ref{fig:cyl_bu} and \ref{fig:sphere_bu} in particular, which illustrate the main dynamical features after blow-up and further illustrate the similarity with the autocatalator model presented in \cite{Gucwa2009}.
	
	\begin{remark}
		\label{rem:sing_analysis}
		\rev{\revsj{As noted in the introductory discussion of this section, o}ne can also prove the existence of a limit cycle in system \eqref{eq:general_form} by an application of the Poincare-Bendixson theorem, i.e.,~without recourse to blow-up techniques. However, very little dynamical insight relating to %stability, uniqueness and 
		the geometric structure of the oscillations is obtained via such an argument. Importantly, the role of different flux terms in producing different phases of the oscillations is revealed via GSPT analysis, insofar as each segment of the singular relaxation cycle in Figure \ref{SingLimFig} (or, more precisely, Figure \ref{fig:cyl_bu}) perturbs to a phase of the oscillation that is dominated by one or more %(potentially competing) 
		flux terms. Specifically, the fast transition bounded away from the $h$ and $c$ axes is governed by the layer problem \eqref{eq:layer_R1}. Here, the flux terms $h / \tau_h(c)$ and $J_{\rm IPR}(h,c)$ are active, and dominate the dynamics. The phase of the oscillations corresponding to vertical flow down the $c-$axis is governed by the reduced problem \eqref{eq:reduced_R1} where $J_{\rm SERCA}^+(c)$ is active, and the phase of the oscillations corresponding to flow along the $h-$axis is governed by the reduced problem \eqref{eq:reduced_R2_explicit}, where all five flux terms play a role in determining the dynamics. The characteristic time-scale associated with each phase of the oscillation is also revealed via the GSPT approach; see again Remark \ref{rem:time-scales}. Finally we note that approaches based on GSPT and blow-up techniques can be lifted to higher dimensions, and therefore provide a natural geometric approach for the analysis of ODE models characterised by multiple time-scales and/or switching more generally.}
	\end{remark}
	
	Our analysis also allows for a leading order approximation of the oscillation period $\mathcal T$ as a function of the model parameter $\tau_{\rm max}$, as in the following Proposition. \rev{Our results are consistent with numerical findings in \cite{sneyd2017dynamical}, where $\tau_{\rm max}$ is shown to play an important role in determining the period of oscillations in the three-dimensional open-cell model \eqref{eq:Full_model_open}.} %, $p$ and $c_t$. 	
	\begin{proposition}\label{prop:period}
		Consider system \wm{\eqref{eq:general_form}} with $(p,c_t)$ fixed within the region specified by Assumption \ref{ass:1}. Then there exists an $\epsilon_0 > 0$ such that for all $\epsilon \in (0,\epsilon_0)$, the leading order approximation for the period $\mathcal T$ of the relaxation cycle $\Gamma_\epsilon$ is linear in $\tau_{\rm max}$. More precisely, written 
		% and % and  with $0 < \epsilon \ll 1$, and the parameter values in Table \ref{tab:params_post_scaling} except for 
		%	$\tau_{\rm max}$ allowed to vary within the regime
		%	\[
		%	\tau_{\rm max} = O(\epsilon^{-2}) .
		%	\]
		%	Then the period $T$ of the relaxation oscillation $\Gamma_\epsilon$, written
		in terms of the \wm{fast time-scale $t$ of system \eqref{eq:general_form}} we have %grows linearly with $\tau_{\rm max}$ in accordance with
		\[
		\mathcal T \sim \wm{\epsilon^{-2}} \tau_{\rm max} v(p, c_t) \qquad \text{as} \qquad \epsilon \to 0,
		\]
		where $v(p, c_t)$ is smooth, positive and bounded on the relevant domain.
	\end{proposition}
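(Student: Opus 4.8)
\emph{Proof proposal.} The plan is to compute $\mathcal T$ by splitting the relaxation cycle $\Gamma_\epsilon$ into the pieces associated with the four timescales of Remark \ref{rem:timescales}, estimating the fast-time ($t$) duration of each, and showing that one piece dominates. By Theorem \ref{thm:main}, $\Gamma_\epsilon$ lies $O(\epsilon^{1/3})$-close to $\Gamma = \Gamma_c \cup \Gamma_h \cup \Gamma_l$ in the Hausdorff distance, and on each normally hyperbolic segment the true flow is $O(\epsilon)$- (resp.\ $O(\delta)$-) close to the associated reduced flow by Lemmas \ref{lem:slow_manifolds_R1} and \ref{lem:slow_manifolds_R2}. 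Accordingly I would write
\[
\mathcal T = \mathcal T_{\mathrm{jump}} + \mathcal T_{S_c} + \mathcal T_{\mathrm{corner}} + \mathcal T_{\mathcal S_a} + \mathcal T_{\mathrm{fold}} ,
\]
collecting the contributions of the fast fiber $\Gamma_l$, the slow flow on $S_{c,\epsilon}$, the corner region near $Q_o$ resolved by the two successive blow-ups, the infra-slow passage along $\mathcal S_{a,\delta}$, and the jump region near $F$, respectively.

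The key point is that the infra-slow passage dominates. Along $\mathcal S_{a,\delta}$ the leading-order motion is the reduced flow \eqref{eq:reduced_R2_explicit} on the timescale $\tau_1 = \epsilon^2 t$. At leading order the trajectory enters $\mathcal S_a$ at the value $C_0$ with $\zeta(C_0)=0$, i.e.\ $C_0 = C_F/\sqrt 2$ (the point at which $\mathcal S_a$ meets $h=0$, corresponding to the start of $\Gamma_h$ at $Q_o$), and leaves through the regular jump point $F$ at $C_F$. Inverting the $\dot C$-component of \eqref{eq:reduced_R2_explicit}, the $\tau_1$-time spent on $\mathcal S_a$ is
\[
\mathcal T_{\mathcal S_a}^{\tau_1} = \int_{C_0}^{C_F}\frac{dC}{\dot C} = \int_{C_0}^{C_F}\frac{2 A_{\mathrm{SERCA}}\,\tau_h(C)\,(C^2 - C_F^2)}{A_{\mathrm{IPR}}\,\gamma c_t\, C^5\,(\zeta(C) - h_\infty(C))}\,dC ,
\]
where I have used $2K\gamma^2 c_t^2 = C_F^2$. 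On $\mathcal S_a$ one has $C<C_F$, while Assumption \ref{ass:1} places the unique equilibrium on $\mathcal S_r$, so that $\zeta(C)-h_\infty(C)<0$ throughout $[C_0,C_F]$; hence both parenthetical factors are negative and the integrand is positive. Converting to fast time via $t=\epsilon^{-2}\tau_1$ gives $\mathcal T_{\mathcal S_a} = \epsilon^{-2}\,\mathcal T_{\mathcal S_a}^{\tau_1}$.

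To extract the $\tau_{max}$-dependence, I would use that in regime (R2) the gating timescale reduces to $\tau_h(C) = \tau_{max}/(C^4+1)$; this follows from the $h$-equation of \eqref{eq:main2} after the substitution $c=\delta C$, together with $\mathfrak h_\infty(\delta C,\delta^2)=h_\infty(C)$ and $c^4+\epsilon^2 = \epsilon^2(C^4+1)$. Since $\tau_{max}$ enters the integrand only through this prefactor — the endpoints $C_0,C_F$ and all remaining quantities depending only on $(p,c_t)$ — it factors out, giving
\[
\mathcal T_{\mathcal S_a}^{\tau_1} = \tau_{max}\, v(p,c_t), \qquad
v(p,c_t) := \int_{C_0}^{C_F}\frac{2 A_{\mathrm{SERCA}}\,(C^2 - C_F^2)}{A_{\mathrm{IPR}}\,\gamma c_t\, C^5(C^4+1)\,(\zeta(C) - h_\infty(C))}\,dC .
\]
Positivity of $v$ was noted above; smoothness in $(p,c_t)$ follows from the smooth parameter-dependence of $A_{\mathrm{IPR}}, A_{\mathrm{SERCA}}, C_F, \zeta$ and $h_\infty$ together with the absence of interior singularities; and boundedness follows by inspecting the endpoints — at $C_0$ the integrand is finite (there $\zeta=0$ but $h_\infty>0$), while at $C_F$ the factor $(C^2-C_F^2)$ vanishes, consistent with finite-time arrival at the jump point. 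Thus $\mathcal T \sim \epsilon^{-2}\tau_{max} v(p,c_t)$, provided the other four contributions are $o(\epsilon^{-2})$.

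Establishing this last estimate is the main obstacle. The fast jump gives $\mathcal T_{\mathrm{jump}}=O(1)$, and the clean slow flow on $S_{c,\epsilon}$ away from the corner gives $\mathcal T_{S_c}=O(\epsilon^{-1})$ (finite $\tau$-time via \eqref{eq:reduced_R1}, with no $\tau_{max}$-dependence). The delicate contributions are $\mathcal T_{\mathrm{corner}}$ and $\mathcal T_{\mathrm{fold}}$, which live in the non-hyperbolic regions where Fenichel theory does not apply. For the corner I would estimate the transit rigorously through the two blow-up charts of the Appendix; the heuristic $\mathfrak J^+_{\mathrm{SERCA}}(c)\approx V_s c^2/K_s^2$ down to $c=O(\sqrt\epsilon)$ suggests $\mathcal T_{\mathrm{corner}}=O(\epsilon^{-3/2})$. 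For the fold I would invoke the Krupa--Szmolyan analysis \cite{Krupa2001a} already used in Lemma \ref{lem:slow_manifolds_R2}, expecting $\mathcal T_{\mathrm{fold}}=O(\epsilon^{-5/3})$. Confirming that each of these is genuinely $o(\epsilon^{-2})$ — so that none competes with $\mathcal T_{\mathcal S_a}$ — is the crux, since it requires precise entry/exit time estimates in the blow-up regions rather than slow-manifold reduction alone.
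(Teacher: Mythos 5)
Your proposal is correct and follows essentially the same route as the paper: the period is computed as the infra-slow passage time along $\mathcal S_a$ from $C_0=\sqrt{K}\gamma c_t$ (where $\zeta=0$) to the jump point $C_F$ via $\int dC/\dot C$ using the reduced flow \eqref{eq:reduced_R2_explicit}, with $\tau_{max}$ factoring out through $\tau_h(C)=\tau_{max}/(1+C^4)$. You are in fact more explicit than the paper, which disposes of the remaining fast, slow, corner and fold contributions with a one-line appeal to the timescale discussion of Section \ref{sec:timescales} and Theorem \ref{thm:main} rather than the order estimates ($O(1)$, $O(\epsilon^{-1})$, $O(\epsilon^{-3/2})$, $O(\epsilon^{-5/3})$) that you sketch.
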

	
	\begin{proof}
		
		Assuming Theorem \ref{thm:main}, % and recalling the discussion in Section \ref{sec:time-scales}, 
		the leading order approximation for the period $\mathcal T$ as $\epsilon \to 0$ is determined by the total time spent in the vicinity of the attracting \wm{infra-slow} manifold $\mathcal S_a$. 
		This can be approximated using the expression for the corresponding reduced flow given in \eqref{eq:reduced_R2} \wm{which evolves on the infra-slow time-scale $\tau_1 = \epsilon^2 t$}. 
		{Let $C(\tau_1)$ denote a solution for \eqref{eq:reduced_R2} such that $\zeta(C(0)) = 0$ and $C(\tau_1^0) = C_F$ for some $\tau_1^0 = \epsilon^2 t_0 > 0$. }
		%{\color{red}{Should $\tilde \tau_0$ in the previous and next sentence be replaced with $\tau_1^0$?}}
		\SJ{This yields $C(0) = \sqrt{K} \gamma c_t$ and $C(\tau_1^0) = \sqrt{2 K} \gamma c_t$, and hence}
		\begin{equation}
			\label{eq:period}
			\epsilon^{2} \mathcal T \sim \int_{0}^{\wm{\tau_1^0}} \wm{d\tau_1} 
			= \int_{\sqrt{K} \gamma c_t}^{\sqrt{2 K} \gamma c_t} \frac{1}{\dot C} dC ,
		\end{equation}
		where $\epsilon^{2}$ appears in the left-hand-side since we aim to approximate the period $\mathcal T$ on the \wm{fast time-scale $t = \epsilon^{-2} \tau_1$ of system \eqref{eq:general_form}}. 
		%{\color{red}{VK: Even with the reminder that $\tilde \tau_0 = \epsilon^2 t_0 > 0$, this step is not clear. By the discussion after \eqref{eq:reduced_R2}, we have that $\dot{C}=\frac{dC}{d \tilde{\tau}}$ but the expression here seems to be in terms of the original time, $t$.}}
		
		Using \eqref{eq:reduced_R2}, the integrand can be written as $1/\dot C = \tau_{\rm max} V(C; p , c_t)$, where the function $V(C ; p , c_t)$ is smooth with respect to $C, c_t, p,$ and independent of $\tau_{\rm max}$. By Assumption \ref{ass:1} we have that $\dot C > 0$, and hence $V(C ; p , c_t) > 0$ on $\mathcal S_a$. Thus, {\color{black}{as $\epsilon \to 0$,}} %VK: moved this from below the equation, for ease of reading.
		\[
		\mathcal T \sim \wm{\epsilon^{-2}} \tau_{\rm max} \int_{\sqrt{K} \gamma c_t}^{\sqrt{2 K} \gamma c_t} V(C ; p ; c_t) dC = \wm{\epsilon^{-2}} \tau_{\rm max} v(p, c_t),
		\]
		 where $v(p,c_t)$ is some smooth, positive and bounded function.
		%\textcolor{red}{\textbf{[The expression for $v(p,c_t)$ is disgusting, but may be digestible for an asymptotic expression $p \sim 0$. The linear relationship with $\tau_{\rm max}$, as well the $p$ and $c_t$ dependence, is discussed in the \cite{sneyd2017dynamical}: see Figure S3 in the supplementary material. Note also that the higher order correction may need to take the reduced flow along $\mathcal S_c$ into account]}}.
	\end{proof}
	
	%\wm{
	\begin{remark}\label{rem:num-period}
		Figure~\ref{fig:non_dim} shows relaxation oscillations with an approximate period $\mathcal T \approx 2\times 10^{4}$, which compares well to the order estimate $\epsilon^{-2} \tau_{\rm max} \approx 5\times 10^{4}$ derived above and evaluated using the parameter values given in Table~\ref{tab:params_post_scaling}.
	\end{remark}
	%}
	
	\rev{Finally, we refer the interested reader to Appendix \ref{sec:bifurcations} for additional results pertaining to the onset of the relaxation oscillations under parameter variation. Here we provide numerical evidence for the explosive onset of relaxation oscillations under variation of the total calcium $c_t$. Such findings are particularly relevant for future analyses of the three-dimensional open cell model \eqref{eq:Full_model_open_ct}, where $c_t$ is a slow variable. We also provide an analytical result describing the gradual (i.e.,~non-explosive) onset of oscillations under variation of the parameter $\tau_{\rm max}$, confirming previous observations in \cite{sneyd2017dynamical} on the role of $\tau_{\rm max}$ as an important parameter for controlling the existence and period of oscillations in models for intracellular calcium dynamics.}

	\section{Discussion and Conclusion}
	\label{sec:discussion_conclusion}
	
	%\wm{Non-standard} 
	Time-scale separation \rev{and switching are} ubiquitous in {\color{black}{models of}} biological and physiological phenomena, but literature on the %mathematical modelling 
	{\color{black}{analysis}} of such systems, {\color{black}{particularly via methods of geometric singular perturbation theory,}} is relatively sparse. 
	\rev{There are a number of reasons for this. The first significant obstacle to analysis of such models concerns the identification of suitable perturbation parameters, since these are frequently not explicit in a model. In the case that one or more perturbation parameters can be identified, a second obstacle arises if there is more than one, namely, the question of how to relate or order perturbation parameters in such a way that the resulting singular perturbation problem is both tractable and reliable as an approximation of the original model. Finally, there are obstacles relating to the mathematical analysis of the resulting singular perturbation problem; singular perturbation problems derived from ODE models for chemical, biological and physiological phenomena are frequently characterised by non-standard time-scale separations, more than two time-scales, and loss of smoothness in the singular limit due to switching, all of which can complicate the analysis.} %This is perhaps due to the significant mathematical complications inherent in such systems, and the perceived absence of a well adapted mathematical framework for their analysis. At present, however, 
	%there is a growing awareness that classical methods based on GSPT can be successfully adapted and applied in such contexts. This has been demonstrated in the context of chemical and biological models in the work of Kosiuk and Szmolyan \cite{Gucwa2009,kosiuk2011scaling,kosiuk2016geometric}. Formal developments in subsequent work by Wechselberger \cite{wechselberger2018geometric}, Goeke \& Walcher \cite{goeke2014constructive}, Lizarraga, Marangell \& Wechselberger \cite{Lizarraga2020,Lizarraga2020c} and Kruff \& Walcher \cite{Kruff2019} have aided in providing a systematic framework for \wm{a coordinate-independent} analysis of such problems via GSPT.
	
	In this work, \rev{we have outlined a heuristic procedure for the formulation as singular perturbation problems of ODE models characterised by multi-scale dynamics and/or switching. The procedure was sketched in generality in Section \ref{sec:the_procedure}, and consists of the following steps:
	(I) non-dimensionalise, \we{(IIa) associate small parameters to maximal process/flux ratess, (IIb) associate small parameters to steep switches, (III)} relate small parameters, and (IV) analyse the system via perturbation theory.}
	
	The procedure was applied in detail {in Sections \ref{sec:steps_I_IV}-\ref{sec:slow-fast_analysis_and_statement_of_the_main_result} to} a closed-cell model for intracellular calcium dynamics. % obtained via a reduction from a canonical open-cell model developed in \cite{sneyd2017dynamical}. %By adopting a process-oriented philosophy and methodology, 
	\rev{Specifically, we identified} a total of five small parameters. \rev{Three of these small parameters, $\epsilon_1, \epsilon_2$ and $\epsilon_3$, were identified in step \we{(IIa)}, and correspond to constant pre-factors of particular flux terms} %The identified small parameters were either (i) 
	%uniform perturbations setting the scale of a particular process 
	(e.g.~$V_s$)\rev{. The remaining two small parameters, $\epsilon_4$ and $\epsilon_5$, were identified in step \we{(IIb)}, and derive from the presence of switching behaviour due to the presence of Hill-type functions with a \we{steep} gradient.} % or (ii) non-uniform perturbations deriving from the switching behaviour associated with particular biological processes. % (the half-values $K_i$). 
	\rev{%For the sake of tractability in a singular perturbation analysis, a
	All five small parameters were related to a single small parameter $\epsilon \ll 1$ via the polynomial scaling \eqref{eq:common_scaling} in step \we{(III)}, the form of which was based on order of magnitude comparisons. This yielded the singular perturbation problem \eqref{eq:main2} (or equivalently \eqref{eq:main1}) %in Proposition \ref{prop:sing_pert_form}, 
	which featured a non-standard time-scale separation}; {\color{black}{the non-standard time-scale separation stemmed from the presence of small parameters associated with}} \rev{both steps (IIa)-(IIb).} %switching behaviour,}}  %which %leads} Small parameters of the latter kind % in particular 
	%are responsible for \wm{non-standard} time-scale separations 
%	since the corresponding singular limit is non-uniform in state space.\we{[comment: I believe it is non-standard not only because of switching terms, but simply because it is non-standard (in the smooth sense as well), i.e. it has both features described, non-standard due to (IIa) and (IIb); please correct me if I am wrong]}
	
	\rev{In Section \ref{sec:slow-fast_analysis_and_statement_of_the_main_result} we analysed system \eqref{eq:main1} using the coordinate-independent formulation of GSPT developed in \cite{wechselberger2018geometric}. }{ \color{black}{The non-standard time-scale separation in the model}} led to distinct scaling regimes (R1) and (R2), with distinct time-scale separations. In the context of biological switching in general, an increasing number of studies show that distinct scaling regimes of this kind are to be expected \cite{Gucwa2009,kosiuk2016geometric,Kristiansen2019d}. These references, along with the current manuscript, also demonstrate the suitability of blow-up methods for combining dynamical information obtained in distinct scaling regimes. %Following the identification of small parameters, a common scaling \wm{with a single small parameter $\epsilon \ll 1$} was proposed in \eqref{eq:common_scaling}. This led to the singular perturbation problem \eqref{eq:main2}, which is given in a general form suited to the GSPT framework in \cite{wechselberger2018geometric}. 
	%and with a single small parameter $\epsilon \ll 1$. 
	We proved existence\rev{, uniqueness and stability for} %and uniqueness 
	the observed three-time-scale relaxation cycles\rev{, as described} in Theorem \ref{thm:main}\rev{, and provided an estimate for the period of the oscillations in Proposition \ref{prop:period}.  Finally, further results pertaining to the onset of the relaxation oscillations under variation of parameters of interest ($p, c_t$ and $\tau_{\rm max}$) are described in Appendix \ref{sec:bifurcations}.} 
	
	{\color{black}{We emphasise that there are other, seemingly simpler, methods for proving the existence of relaxation oscillations in the specific calcium dynamics model investigated in this work. In particular, using standard phase plane arguments in combination with the Poincar\'e-Bendixson theorem might be a natural way to proceed for this two-dimensional model. However, }}
	\revsj{%the specified parameter regime, 
	our methods provided additional information on the geometric and time-scale structure of the oscillations which cannot be obtained directly via these other methods.} 
	{\color{black}{Furthermore, our methods are transferable to higher-dimensional problems (although we acknowledge that their implementation in higher-dimensional models will have challenges.)}}
	%, in a setting which may readily be transferred to similar and higher dimensional problems.}
	
%	\wm{We then identified a singular Andronov-Hopf bifurcation together with a complete/incomplete canard explosion as an onset mechanism for oscillations under the variation of the total calcium concentration or IP$_3$ concentration.} We also described a relationship between the system parameter $\tilde \tau_{\rm max}$, which sets the speed of evolution of $h$, and the period of oscillations. In particular, Proposition \ref{prop:period} describes a linear relationship for values $\tilde \tau_{\rm max} = O(\epsilon^{-2})$, and Theorem \ref{thm:hopf} shows that a supercritical Andronov-Hopf provides \SJ{one possible} mechanism for the onset of oscillations with $\tilde \tau_{\rm max} = O(\epsilon^{-3/2})$. 
	
	%\wm{We also identified a singular Andronov-Hopf bifurcation together with a complete/incomplete canard explosion as an alternative onset mechanism for oscillations under the variation of the total calcium concentration or IP$_3$ concentration.}
	%\WM{check/compare/update after section 6 is done.}
	
%	{\color{red}{}I think this would be a good place to return to the idea that it is possible to get some of the information presented here for the 2D calcium model (e.g., existence of relaxation oscillations) via other more straightforward methods, but there are advantages in understanding to doing it our way - and this paper was intended to demonstrate the method rather than prove something momentus about the 2D calcium model. This could be a brief comment.}
	
	There are many directions in which one can take this work. For example, {it could be interesting to give a detailed description of the canard and incomplete canard explosions described in Section \ref{sub:can_explosion}.}
	%one is frequently interested in qualitative changes to dynamics under variation of either the IP$_3$ concentration $p$, the total calcium concentration $c_t$, or both. Hence, it is natural to consider a \wm{two}-parameter bifurcation analysis in $(p,c_t)$. \SJ{Such an analysis would involve a detailed description of the canard and incomplete canard explosions described in Section \ref{sub:can_explosion}.} %Preliminary bifurcation analysis suggests that there are multiple possible mechanisms for the onset of relaxation oscillations in system \eqref{eq:main2}. One possibility is a canard explosion which is similar to that described in \cite{gucwa2009geometric}. This is expected in parameter regimes prescribed by Assumption \ref{ass:1}, in which the system has a unique equilibrium. In parameter regimes corresponding to up to three equilibria, incomplete canard explosions with similarities to the system described in \cite{de2015neural} can be expected. A third possibility is suggested by the identification of the Andronov-Hopf bifurcation under $\tilde \tau_{max}$ variation in Theorem \ref{thm:hopf}, and does not involve canard phenomena. 
	%\WM{cases one and three are not the same!? Emphasize!?}
	\SJ{The} current manuscript also paves the way for rigorous analytical treatment of the higher-dimensional open-cell model from which it is derived. In fact, one could view system \eqref{eq:Full_model} as a layer problem for the open-cell but `almost-closed' model \eqref{eq:Full_model_open_ct} with $|J_{pm}| \ll 1$. In this context, a \wm{one}-parameter bifurcation analysis of system \eqref{eq:main2} in $c_t$ amounts to an analysis of the layer problem to system \eqref{eq:Full_model_open_ct}; a three-dimensional system with (at least) four time-scales, due to an additional slow evolution in $c_t$. The case $|J_{pm}| = O(1)$ corresponding to `more open' cells is also of physiological significance. In both cases, this work provides a solid foundation upon which these significant and highly non-trivial analyses become feasible.

	\rev{
	\section*{Acknowledgements}
	
	The authors would like to thank the associate editor, editor and reviewers of the first submitted version of the manuscript, whose feedback and observations led to significant improvements in the current version.

%	S.~Jelbart also acknowledges partial funding from the SFB/TRR 109 Discretization and Geometry in Dynamics grant, and together with M.~Wechselberger, partial funding	from the ARC Discovery Project grant DP180103022. {\color{red}{VK: Seems like we should either acknowledge funding here or in the footnote on the first page but not in both places. I don't care where it happens}}
	}

	% section discussion_conclusion (end)

	\bibliographystyle{siamplain}
	\bibliography{./biblio_new}

	% section references (end)

	\appendix

	%\section{Equilibria in the system \eqref{eq:reduced_R2}}
	%\label{app:equilibria}
	
	%For each positive, real root of the equation \eqref{eqs:reduced_equilibria_R2b}, there exists an equilibrium in the reduced problem \eqref{eq:reduced_R2}. The coefficients $\alpha_i$, $i=0,1,2,3$ are given by
	%\begin{equation}
	%\begin{aligned}
	%\alpha_3 &= \frac{A_{\text{SERCA}}}{\gamma c_t A_{\text{\text{IPR}}}}, && \alpha_2 = - K_h^4 - \frac{A_{\text{SERCA}}}{ A_{\text{\text{IPR}}}} K \gamma c_t , \\
	%\alpha_1 &= \frac{K_h^4 A_{\text{SERCA}}}{\gamma c_t A_{\text{\text{IPR}}}} , && \alpha_0 = - \frac{K K_h^4 \gamma c_t A_{\text{SERCA}}}{A_{\text{\text{IPR}}}} .
	%\end{aligned}
	%\end{equation}

	\section{Proof of Theorem \ref{thm:main}}
	\label{app:proof_theorem}
	
	In this Appendix we prove Theorem \ref{thm:main}. The main technique is the blow-up method developed in \cite{Dumortier1996} in the formalism of \cite{Krupa2001a,Krupa2001b}. In Section \ref{sec:blow_up} we introduce a cylindrical blow-up of the degenerate line $S_h$, allowing us to resolve much of the degeneracy in \eqref{eq:general_form}, and extend the singular cycle $\Gamma$ onto the blow-up cylinder.
	%In Section \ref{sec:spherical_blowup}, a successive (spherical) blow-up is introduced in order to resolve remaining non-hyperbolicity asociatated with a nilpotent point identified following the cylinderical blow-up.
	This allows for the presentation of a singular cycle \revsj{with improved hyperbolicity properties}, about which a Poincaré map is defined and analysed in Section \ref{sec:poincare_map}. 
%	{\color{red}{Not sure what ``improved" means here; perhaps use a different word or explain a bit? I think using improved is fine later in the section, since it is explained there.}}
	We present a sequence of lemmas which will allow for a proof of Theorem \ref{thm:main}, which is given in Section \ref{ssec:proof_of_theorem}. As part of this proof we rely on additional results obtained after the application of a successive (spherical) blow-up, which must be introduced in order to resolve additional degeneracy associated with a degenerate point that is identified following the cylindrical blow-up. This (successive) blow-up analysis is deferred to Section \ref{sec:spherical_blowup} for expository reasons.
	
	\begin{remark}
		The geometry and analysis of the relaxation cycles is similar in many respects to the autocatalytic relaxation oscillations studied in \rev{\cite{milik2001} and subsequently} in \cite{Gucwa2009}. %We refer to this reference throughout in situations where technical details are `similar enough' to be omitted for the sake of brevity.
	\end{remark}

	\subsection{Blow-up of the non-hyperbolic line $S_h$} 
	\label{sec:blow_up}

	We consider the extended system
	\begin{equation}\label{eq:main_bu}
		\begin{split}
			\begin{pmatrix}
				h' \\
				c'
			\end{pmatrix}
			&= g(h,c)+\epsilon G(h,c,\epsilon),  \\
			\epsilon' \ \  &= 0 
		\end{split}
	\end{equation}
	obtained by adding the trivial equation $\epsilon'=0$ to system \eqref{eq:general_form}, and define a weighted cylindrical blow-up by the transformation
	\begin{equation}
		\label{eq:cylindrical_bu}
		r \geq 0, \ \left(\bar c, \bar \epsilon \right) \in S^1 \mapsto
		\begin{cases}
			c = r \bar c , \\
			\epsilon = r^2 \bar \epsilon ,
		\end{cases}
	\end{equation}
	which `blows up' $S_h$ to the cylinder $\{r = 0\} \times S^1 \times \mathbb R_+$. We \SJ{work in two coordinate charts, defined via $K_1: \bar c=1$ and $K_2: \bar\epsilon=1$,} for which we \SJ{introduce} chart-specific coordinates
	\begin{equation}\label{eq:chart_coordinates}
		K_1: \ c = r_1,  \epsilon = r_1^2 \epsilon_1,  \qquad K_2: \ c = r_2 c_2,  \epsilon= r_2^2.
	\end{equation}
	\SJ{Chart} $K_1$ is referred to as an entry/exit chart, \SJ{and} chart $K_2$ is referred to as the `family rescaling' chart. The transition maps between $K_1$ and $K_2$ are given by
	\begin{equation}
		\begin{aligned}
			\kappa_{12}: \ r_1 &= r_2c_2, && \epsilon_1=c_2^{-2}, && c_1 > 0 , \\
			\kappa_{21}: \ c_2 &= \epsilon_1^{-1/2}, && r_2= r_1 \epsilon_1^{1/2}, && \epsilon_1 > 0 .
		\end{aligned}
	\end{equation}
	%In what follows, for objects with direct analogues in the analysis of the autocatalator in \cite{gucwa2009geometric}, we adopt an equivalent notation \textbf{[necessary?]}.
	We will adopt some common notational conventions throughout the analysis. In particular, given a set $\mathcal A$, 
	% expressed in terms of coordinates $(h,c,\epsilon)$
	we denote its image %under the blow-up transformation \eqref{eq:cylindrical_bu} by $\bar \gamma$, and its image 
	in coordinate chart $K_i$ by $\mathcal A_i$.

	\subsubsection{Dynamics in charts}
	\label{sec:dynamics_in_charts}
	
	In the family rescaling chart $K_2$ we have \SJ{$r_2 = \sqrt{\epsilon}=\delta$}. Following the time desingularisation $dt = \delta^{-3}dt_2$, which amounts to division of the vector field by $\delta^3$, one obtains system \eqref{eq:main_R2} in chart $K_2$, except with the notation $C = c_2$, $\delta = r_2$. %i.e.~the dynamics in chart $K_2$ is obtained in the analysis of the dynamics in regime (R2), except with the notation $C = c_2$, $\delta = r_2$. 
	Since we have already studied this system in Section \ref{ssec:singular_limit_analysis_R2}, we need not consider it further here.
	
	\
	
	Now consider the dynamics in chart $K_1$. Differentiating the relevant expressions in \eqref{eq:chart_coordinates} and applying a time desingularisation $dt = r_1^{-3}dt_1$ which amounts to division by $r_1^3$, we obtain
	\begin{equation}\label{eq:K1_equations}
		\begin{split}
			h ' &= r_1 \tau_{\rm max}^{-1} \left(\hat h_\infty(\epsilon_1) - h \right) \left(1 + \epsilon_1^2 \right) ,   \\
			r_1' &= r_1 \left[\hat J_{\text{IPR}}(h, r_1, \epsilon_1) - \epsilon_1 \hat{\mathfrak J}^+_{\text{SERCA}}(r_1) + \epsilon_1^2 \hat{\mathfrak J}^-_{\text{SERCA}}(r_1) \right], \\
			\epsilon_1' &= - 2 \epsilon_1 \left[\hat J_{\text{IPR}}(h, r_1, \epsilon_1) - \epsilon_1 \hat{\mathfrak J}^+_{\text{SERCA}}(r_1) + \epsilon_1^2 \hat{\mathfrak J}^-_{\text{SERCA}}(r_1) \right],
		\end{split}
	\end{equation}
	where
	\begin{equation}
		\begin{split}
			\hat h_\infty(\epsilon_1) &:= \mathfrak h_\infty(r_1, r_1^2 \epsilon_1), \qquad 
			\hat J_{\text{IPR}}(h, r_1, \epsilon_1) := \frac{1}{r_1^4} \jipr(h, r_1, r_1^2 \epsilon_1), \\
			\hat{\mathfrak J}^+_{\text{SERCA}}(r_1) &:= \frac{1}{r_1^2} \mathfrak J^+_{\text{SERCA}}(r_1), \qquad  \hat{\mathfrak J}^-_{\text{SERCA}}(r_1) := \mathfrak J^-_{\text{SERCA}}(r_1) ,
		\end{split}
	\end{equation}
	all extend smoothly to $r_1=0$ due to common factors of $r_1$ in their respective numerators. \SJ{Note that the} $(')$ notation \SJ{now denotes} differentiation with respect to $t_1$.
	
	%\begin{proof}
	%	Equations for $h'$ and $r_1'$ are obtained simply by rewriting the right hand sides of the equations for $h'$ and $c'$ respectively in the $K_1$ coordinates given in \eqref{eq:chart_coordinates}, followed by a time desingularisation $dt = r_1^{-3}dt_1$. To obtain the equation for $\epsilon_1'$, notice that
	%	\[
	%	\epsilon_1' = \left(\frac{\epsilon}{r_1^2}\right) \epsilon' - \left(\frac{2 \epsilon_1}{r_1}\right) r_1' = - \left(\frac{2 \epsilon_1}{r_1}\right) r_1' .
	%	\] 
	%\end{proof}
	We identify two lines of equilibria
	\begin{equation}
		\label{eq:K1_eq_lines}
		l_{h,1} = \{(h, 0, 0) : h \geq 0 \} , \qquad l_{c,1} = \{(0, r_1, 0) : r_1 \geq 0 \} ,
	\end{equation}
	as well as a smooth curve of equilibria
	\begin{equation}
		\mathcal S_1 = \left\{(\Psi(\epsilon_1), 0, \epsilon_1) : \epsilon_1 \geq 0 \right\} , \qquad 
		\Psi(\epsilon_1) = \frac{A_{\text{SERCA}}}{A_{\text{\text{IPR}}} \gamma c_t} \epsilon_1 \left(1 - K \gamma^2 c_t^2 \epsilon_1\right) .
	\end{equation}
	%where
	%\[
	%\Psi(\epsilon_1) = \frac{A_{\text{SERCA}}}{A_{\text{\text{IPR}}} \gamma c_t} \epsilon_1 \left(1 - K \gamma^2 c_t^2 \epsilon_1\right) .
	%\]
	The line $l_{c,1}$ corresponds to the image of the attracting critical manifold $S_c$ within chart $K_1$. 
	For $\epsilon_1>0$, the curve $\mathcal S_1$ coincides with the critical manifold identified in \eqref{eq:S_R2}, which we denote here by $\mathcal S_2$. The origin, which we denote by $Q_3 = (0,0,0)$, lies at the intersection of all three curves, % \SJ{$Q_3 = l_{h,1} \cap \mathcal S_1 \cap l_{c,1}$}, 
	and the regular fold point $F$ in equation \eqref{eq:Fold} is identified as a fold point $Q_5$ with chart $K_1$ coordinates
	\[
	Q_5 = \left(h_f, r_{f,1}, \epsilon_{f,1} \right) = \left(\frac{A_{\text{SERCA}}}{4 K A_{\text{\text{IPR}}} \gamma^3 c_t^3} , 0, \frac{1}{2 K \gamma ^2 c_t^2} \right).
	\]
	The point $Q_5$ divides $\mathcal S_1$ into normally hyperbolic branches as $\mathcal S_1 = \mathcal S_{a,1} \cup \{Q_5\} \cup \mathcal S_{r,1}$,
	where
	\[
	\mathcal S_{a,1} = \left\{(\Psi(\epsilon_1), 0, \epsilon_1) : \epsilon_1 > \epsilon_{f,1}) \right\} , \qquad
	\mathcal S_{r,1} = \left\{(\Psi(\epsilon_1), 0, \epsilon_1) : \epsilon_1 < \epsilon_{f,1}) \right\} ;
	\]
	see Figure \ref{fig:K1}. The following result describes stability properties of $l_{h,1}, l_{c,1},\mathcal S_{a,1}, \mathcal S_{r,1}$ and $Q_3$.
	
	\begin{center}
		\captionsetup{format=plain}
		\begin{figure}[t!]
			\hspace{0em}\centerline{\includegraphics[scale=0.65]{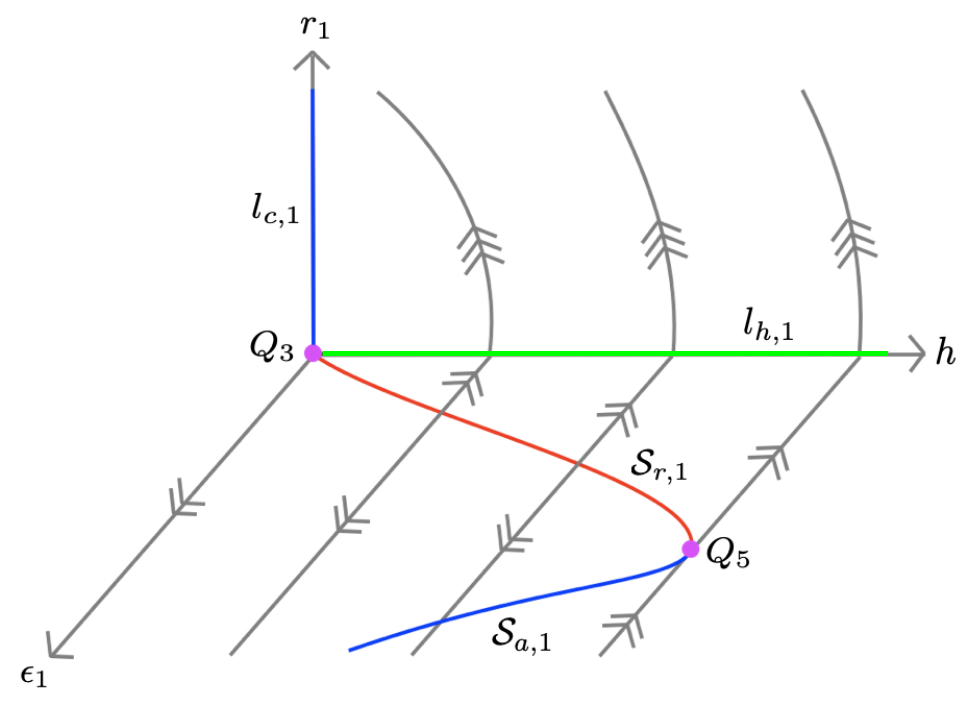}}
			\caption{Main dynamical features identified in the analysis in chart $K_1$. Normally hyperbolic and attracting (repelling) manifolds of equilibria $l_{c,1}$ and $\mathcal S_{a,1}$ ($\mathcal S_{r,1}$) are shown in blue (red), and the line of saddle-type steady states $l_{h,1}$ is shown in green. The point $Q_5$ is the image of the regular jump point $F$ in chart $K_1$ coordinates. The equilibrium $Q_3$ is nonhyperbolic and will require further blow-up; see Section \ref{sec:spherical_blowup}.}\label{fig:K1}
		\end{figure}
	\end{center}
	
	\begin{lemma}\label{lem:K1_dynamics} The following hold for system \eqref{eq:main_bu}:
		\begin{enumerate}
			\item[(i)] The line $l_{h,1} \setminus Q_3$ is partially hyperbolic and attracting, with eigenvalues \SJ{$\lambda = - r_1 \tau_{\rm max}^{-1} , \ 0, \ 0$.}
			%		Stable and center eigenspaces respectively are given by
			%		\[
			%		E^s(l_{h,1} \setminus Q_3) = span\left\{\left(- \frac{1}{K_\tau^4 \tau_{max}} , D_{h} \hat J_{\text{IPR}}(0,r_1,0), 0\right)^T\right\},
			%		\]
			%		and
			%		\[
			%		E^c(l_{h,1} \setminus Q_3)  = span\left\{(0,1,0)^T,(0,0,1)^T\right\}.
			%		\]
			\item[(ii)] The line $l_{h,1} \setminus Q_3$ is normally hyperbolic and saddle-type, with eigenvalues $\lambda = - 2 A_{\text{\text{IPR}}} \gamma c_t h$, $A_{\text{\text{IPR}}} \gamma c_t h$,  $0$. For each $b = \{(0, \alpha, 0)\} \in l_{h,1} \setminus Q_3$, the corresponding one-dimensional stable manifold $W^s(b)$ is contained within the line $h = b, r_1 = 0, \epsilon_1 \geq 0$.
			\item[(iii)] The critical manifolds $\mathcal S_{a,1}$, $\mathcal S_{r,1}$ are normally hyperbolic and attracting, respectively repelling.
			\item[(iv)] The point $Q_3$ is a nilpotent singularity, with eigenvalues $\lambda = 0,0,0$.
		\end{enumerate}
	\end{lemma}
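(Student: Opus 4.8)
The plan is to prove each item by linearising the desingularised vector field \eqref{eq:K1_equations} along the relevant equilibrium set, exploiting two structural features of \eqref{eq:K1_equations}: first, the plane $\{r_1=0\}$ is invariant, since every component carries a factor of $r_1$ or $\epsilon_1$; and second, the common bracket
\[
B(h,r_1,\epsilon_1) := \hat J_{\text{IPR}}(h,r_1,\epsilon_1) - \epsilon_1 \hat{\mathfrak J}^+_{\text{SERCA}}(r_1) + \epsilon_1^2 \hat{\mathfrak J}^-_{\text{SERCA}}(r_1)
\]
extends smoothly to $r_1=0$, with limiting values fixed by Taylor expansion of the fluxes. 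As preliminary input I would record three elementary facts: $\hat h_\infty(0)=0$ (immediate from \eqref{eq:mathfrak_h}); $\hat J_{\text{IPR}}(0,r_1,0)\equiv 0$ (the factor $h$ in $P_O$); and $\hat J_{\text{IPR}}(h,0,\epsilon_1) = A_{\text{IPR}}\gamma c_t h$, the last obtained by expanding $\mathfrak J^{(0)}_{\text{IPR}}(h,c)$ at $c=0$ and recognising the constant $A_{\text{IPR}} = k_{\text{IPR}}p^2/(k_\beta K_c^4 K_p^2)$.

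For item (i) (the line $l_{c,1}$), linearising at a point $(0,r_1,0)$ with $r_1>0$ produces a Jacobian whose only nonzero eigenvalue is $-r_1\tau_{max}^{-1}$, arising from the $h$-equation. The two remaining eigenvalues vanish: one along the tangent to $l_{c,1}$, and one because $\partial_{r_1}\hat J_{\text{IPR}}(0,r_1,0)=0$, a consequence of $\hat J_{\text{IPR}}(0,r_1,0)\equiv 0$. This yields the claimed partial hyperbolicity and attraction. For item (ii), at $(h,0,0)\in l_{h,1}$ with $h>0$ the Jacobian is upper triangular with diagonal entries $0$, $\partial_{r_1}(r_1 B)=B(h,0,0)=A_{\text{IPR}}\gamma c_t h$, and $\partial_{\epsilon_1}(-2\epsilon_1 B)=-2B(h,0,0)=-2A_{\text{IPR}}\gamma c_t h$, giving precisely the saddle spectrum claimed. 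The stable-manifold assertion then follows by observing that the line $\{h=b,\ r_1=0,\ \epsilon_1\geq 0\}$ is invariant, that $h$ and $r_1$ are frozen on it, and that the induced scalar flow $\epsilon_1'=-2\epsilon_1 B(b,0,\epsilon_1)$ contracts to $\epsilon_1=0$ because $B(b,0,0)=A_{\text{IPR}}\gamma c_t b>0$.

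For item (iii) I would avoid a direct spectral computation and instead transfer the result from chart $K_2$. For $\epsilon_1>0$ the transition map $\kappa_{12}$ is a diffeomorphism, and the two desingularised time parametrisations differ only by the positive factor relating $dt_1$ and $dt_2$; hence the normal hyperbolicity and the attracting/repelling dichotomy of $\mathcal S=\mathcal S_2$ — already established in Section \ref{ssec:singular_limit_analysis_R2} through the sign of $\lambda_{\mathcal S}(C)$ in \eqref{EV_R2} — carry over verbatim to $\mathcal S_{a,1}$ and $\mathcal S_{r,1}$, with $Q_5$ (the image of $F$) separating the branches. A direct check is also available: linearisation at $(\Psi(\epsilon_1),0,\epsilon_1)$ yields the single nontrivial eigenvalue $-2\epsilon_1\,\partial_{\epsilon_1}B$, the other two eigenvalues being zero (one tangent to $\mathcal S_1$, one radial), and the sign of $\partial_{\epsilon_1}B$ reproduces the dichotomy. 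Finally, item (iv) is immediate: at $Q_3=(0,0,0)$ every partial derivative of the right-hand side of \eqref{eq:K1_equations} vanishes, since each term retains a factor of $r_1$, $\epsilon_1$, or $B(0,0,0)=0$, together with $\hat h_\infty(0)=0$; the linearisation is therefore the zero matrix, so $Q_3$ is a nilpotent (triple-zero) singularity whose resolution is deferred to the spherical blow-up of Section \ref{sec:spherical_blowup}.

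The main obstacle I anticipate is item (iii). The subtlety is that in the full three-dimensional blow-up space the radial direction contributes an \emph{additional} zero eigenvalue, so one must be careful that ``normally hyperbolic'' refers to the single nontrivial transverse eigenvalue rather than to strict hyperbolicity of the entire normal bundle. Framing the argument through the diffeomorphic transition to the already-analysed chart $K_2$ is the cleanest way to sidestep this bookkeeping, and it simultaneously confirms the identification of $Q_5$ with the regular jump point $F$. The remaining effort is the routine but necessary Taylor-expansion bookkeeping required to extend the hatted flux functions to $r_1=0$ and to pin down the constants $A_{\text{IPR}}$ and $A_{\text{SERCA}}$.
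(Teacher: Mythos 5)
Your proposal is correct and follows essentially the same route as the paper: direct linearisation of \eqref{eq:K1_equations} for the spectra in (i)--(iv) (using exactly the facts $\hat h_\infty(0)=0$, $\hat J_{\text{IPR}}(0,r_1,0)\equiv 0$ and $\hat J_{\text{IPR}}(h,0,0)=A_{\text{IPR}}\gamma c_t h$), and, for the stable-manifold claim in (ii), the invariance of the lines $\{h=\alpha,\ r_1=0,\ \epsilon_1\geq 0\}$ on the cylinder together with the contracting scalar flow $\epsilon_1'=-2\epsilon_1 A_{\text{IPR}}\gamma c_t\alpha+O(\epsilon_1^2)$, which is precisely the paper's argument. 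Your alternative of transferring normal hyperbolicity of $\mathcal S_{a,1}$ and $\mathcal S_{r,1}$ from the chart-$K_2$ analysis via $\kappa_{12}$ is a harmless variant of the paper's direct calculation, and you are right that item (i) as printed should refer to $l_{c,1}$ rather than $l_{h,1}$.
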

	
	\begin{proof}
		Statements (i), (iii), (iv) and the first assertion in (ii) can be shown by direct calculations following linearisation of the system \eqref{eq:K1_equations}. To see that the stable manifold $W^s(b)$ for a given base point $b = \{(0, \alpha, 0)\} \in l_{h,1} \setminus Q_3$ lies within the line $h = \alpha, r_1 = 0, \epsilon_1 \geq 0$, consider the system obtained from \eqref{eq:K1_equations} by restricting to the invariant plane $\{r_1 = 0\}$ (i.e.,~to the cylinder):
		\begin{equation}\label{eq:K1_inv2}
			\begin{split}
				h' &= 0, \\
				\epsilon_1' &= - 2 \epsilon_1 A_{\text{\text{IPR}}} \gamma c_t \left(h - \frac{A_{\text{SERCA}}}{A_{\text{\text{IPR}}} \gamma c_t} \epsilon_1 \left(1 - K \gamma^2 c_t^2 \epsilon_1\right)\right).
			\end{split}
		\end{equation}
		The result follows from the observation that each line $h = \alpha, \epsilon_1 \geq 0$ is invariant under the flow induced by \eqref{eq:K1_inv2}, with dynamics governed by
		\begin{equation}
			\epsilon_1' = - 2 \epsilon_1 A_{\text{\text{IPR}}} \gamma c_t \alpha + O(\epsilon_1^2) .
		\end{equation}
	\end{proof}
	The results in Lemma \ref{lem:K1_dynamics} lead to the following result.
	
	\begin{lemma}\label{lem:K1_extended_manifold} Fix $\nu_+ > \nu_- > 0$ and $\eta > 0$. For $\eta$ sufficiently small, system \eqref{eq:main_bu} has a two-dimensional center manifold
		\[
		M_1 = \left\{(\hat h_\infty(\epsilon_1), r_1, \epsilon_1) : r_1 \in [\nu_-, \nu_+], \epsilon_1 \in [0,\eta]  \right\} ,
		\]
		containing the line of equilibria $l_{c,1}$ as the restriction $M_1|_{\epsilon_1 = 0}$. Moreover, there exists a stable foliation $\mathcal F_1$ with base $M_1$ and one-dimensional fibers. The contraction along $\mathcal F_1$ in a time interval of length $t$ is stronger than $e^{ct_1}$ for any $0 < c < \nu_- / \tau_{\rm max}$, and the flow on $M_1$ is strictly decreasing in the variable $r_1$.
	\end{lemma}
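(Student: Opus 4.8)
The plan is to regard $l_{c,1}$ as a line of partially hyperbolic equilibria and to construct $M_1$ by a center manifold reduction, staying over the compact base $r_1\in[\nu_-,\nu_+]$, which is bounded away from the degenerate point $Q_3=(0,0,0)$. By Lemma \ref{lem:K1_dynamics}, along $l_{c,1}\setminus\{Q_3\}$ the linearisation of \eqref{eq:K1_equations} at a point $(0,\rho,0)$ has a single stable eigenvalue $-\rho/\tau_{max}<0$ and a double zero eigenvalue; the zero is defective, with geometric eigenvector $(0,1,0)$ tangent to $l_{c,1}$ and generalised eigenspace equal to the $(r_1,\epsilon_1)$-plane $\{h=0\}$. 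First I would invoke a parametrised (fibred) center manifold theorem, using the compactness of the $r_1$-range and the uniform splitting, to obtain for $\eta$ sufficiently small a locally invariant two-dimensional center manifold, expressed as a graph $h=m(r_1,\epsilon_1)$ over $(r_1,\epsilon_1)\in[\nu_-,\nu_+]\times[0,\eta]$, tangent to $\{h=0\}$ along $l_{c,1}$.

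Next I would pin down the graph to leading order. The key observation is that the first equation in \eqref{eq:K1_equations} forces $h'=0$ \emph{exactly} on the $h$-nullcline $\{h=\hat h_\infty(\epsilon_1)\}$, where $\hat h_\infty(\epsilon_1)=\mathfrak h_\infty(r_1,r_1^2\epsilon_1)$ depends only on $\epsilon_1$ and, by \eqref{eq:mathfrak_h}, satisfies $\hat h_\infty(\epsilon_1)=O(\epsilon_1^2)$ with $\hat h_\infty(0)=0$. Writing the invariance equation $\partial_{r_1}m\,r_1'+\partial_{\epsilon_1}m\,\epsilon_1'=h'\big|_{h=m}$ and using that on the manifold $r_1'=r_1 B$ and $\epsilon_1'=-2\epsilon_1 B$, with the common bracket $B=\hat J_{\text{IPR}}(m,r_1,\epsilon_1)-\epsilon_1\hat{\mathfrak J}^+_{\text{SERCA}}(r_1)+\epsilon_1^2\hat{\mathfrak J}^-_{\text{SERCA}}(r_1)=O(\epsilon_1)$, the left-hand side is $O(\epsilon_1^3)$ while the right-hand side equals $r_1\tau_{max}^{-1}(\hat h_\infty(\epsilon_1)-m)(1+\epsilon_1^2)$. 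Since $r_1\ge\nu_->0$ this yields $m(r_1,\epsilon_1)=\hat h_\infty(\epsilon_1)+O(\epsilon_1^3)$; in particular $m(r_1,0)=0$, so $M_1|_{\epsilon_1=0}=l_{c,1}$, and I would identify $M_1$ with the stated graph (the nullcline serving as a leading-order representative of the non-unique center manifold).

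It then remains to produce the stable foliation with its contraction rate and the monotonicity of the flow on $M_1$. On $M_1$ the stable spectrum $-r_1/\tau_{max}\le-\nu_-/\tau_{max}$ is uniformly bounded away from the center spectrum $\{0\}$, so I would invoke the existence of an invariant stable foliation $\mathcal F_1$ over $M_1$ with one-dimensional fibres tangent to the stable eigendirection (the stable foliation of a normally attracting center manifold). For the rate, the instantaneous contraction along fibres is governed by $-r_1/\tau_{max}$; because $r_1$ is nonincreasing along the flow on $M_1$ the bound $r_1\ge\nu_-$ persists while a trajectory remains in $M_1$, so integrating over a time interval of length $t$ gives contraction stronger than $e^{-ct_1}$ for any $c<\nu_-/\tau_{max}$, the loss of the endpoint accounting for the fact that $\mathcal F_1$ only linearises the flow to leading order. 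Finally, substituting $h=m(r_1,\epsilon_1)$ into the $r_1$-equation and using $\hat J_{\text{IPR}}(\hat h_\infty(\epsilon_1),r_1,\epsilon_1)=O(\epsilon_1^2)$ (since $\jipr$ vanishes at $h=0$) gives $r_1'=r_1\big[-\epsilon_1\hat{\mathfrak J}^+_{\text{SERCA}}(r_1)+O(\epsilon_1^2)\big]<0$ for $\epsilon_1\in(0,\eta]$ with $\eta$ small, so the flow on $M_1$ is strictly decreasing in $r_1$, consistent with the reduced flow \eqref{eq:reduced_R1}.

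The hard part will be establishing the center manifold and, especially, the stable foliation \emph{uniformly} down to the degenerate boundary $\epsilon_1=0$, where the equilibria on $l_{c,1}$ fail to be normally hyperbolic (the extra zero eigenvalue) so that Fenichel theory does not apply directly; the non-isolated (line) structure of the equilibria must be accommodated by the fibred construction, and it is precisely the uniform spectral gap $r_1/\tau_{max}\ge\nu_-/\tau_{max}$ over the compact $r_1$-range that allows the foliation and the explicit contraction constant to be carried through the limit $\epsilon_1\to0$.
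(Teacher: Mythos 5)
Your proposal is correct and follows essentially the same route as the paper's proof: center manifold theory (using the spectral data of Lemma \ref{lem:K1_dynamics}) for existence of $M_1$, invariant-manifold/Fenichel-type theory for the stable foliation and its contraction rate, and a direct restriction of \eqref{eq:K1_equations} to $M_1$ yielding $r_1' = -\left(V_s/(K_s^2+r_1^2)\right) r_1\epsilon_1 + O(r_1\epsilon_1^2)<0$. The paper simply cites these theorems without the invariance-equation computation you use to pin down the graph; your remark that the exact nullcline is only a leading-order representative of the (non-unique) center manifold is a point of care the paper glosses over.
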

	
	\begin{proof}
		Existence of a two-dimensional strongly attracting center manifold $M_1$ containing $l_{c,1}$ follows from Lemma \ref{lem:K1_dynamics} and center manifold theory, and the existence of a stable foliation with exponential contraction follows from Fenichel theory.
		
		To show that the flow on $M_1$ is strictly decreasing in the variable $r_1$, we restrict equations \eqref{eq:K1_equations} to $M_1$, obtaining the system
		\begin{equation}
			\begin{split}
				r_1' &= - \left(\frac{V_s}{K_s^2 + r_1^2}\right) r_1 \epsilon_1 + O(r_1 \epsilon_1^2) , \\
				\epsilon_1' &= \left(\frac{2 V_s}{K_s^2 + r_1^2}\right) \epsilon_1^2 + O(\epsilon_1^3) .
			\end{split}
		\end{equation}
		For $\eta > 0$ sufficiently small, $r_1' < 0$ as claimed.
	\end{proof}

	% section dynamics_in_charts (end)
	
	\subsubsection{An improved singular cycle $\Gamma$}
	\label{sec:dynamics_in_the_blowup_combined}

	Our analysis thus far leads to the dynamics sketched in Figure \ref{fig:cyl_bu}. We have two lines of steady states  ($l_h$ and $l_c$), a folded critical manifold $\mathcal S$ on the cylinder, and distinguished points $Q_i$, $i = 1, \ldots , 5$ given in chart $K_1$ coordinates by $Q_1 =(h_f, 0, 0), \ Q_2 = (0, r_{d,1} , \ 0 ), \  Q_3 = (0, 0, 0 ), \ Q_4 = (0, 0, \epsilon_{l,1} )$ and $Q_5 = (h_f, 0, \epsilon_{f,1} )$. 
	%\begin{equation}
	%\begin{split}
	%Q_1 = & \left(h_f, 0, 0 \right), \qquad Q_2 = \left(0, r_{d,1} , 0 \right), \qquad Q_3 = \left(0, 0, 0 \right), \\
	%& Q_4 = \left(0, 0, \epsilon_{l,1} \right), \qquad Q_5 = \left(h_f, 0, \epsilon_{f,1} \right) .
	%\end{split}
	%\end{equation}
	The location of point $Q_4$ is given explicitly by $\epsilon_{l,1}= 1/K \gamma^2 c_t^2$. We do not give an explicit location for point $Q_2$, noting simply that $r_{d,1}>0$ by Lemma \eqref{lem:map12} below.
	%Each point $Q_i$ corresponds to a jump, landing, or transition onto/off of the cylinder.
	Taking the points $Q_i$ as concatenation points allows for the construction of a singular relaxation cycle
	\[
	\Gamma = \Gamma_1 \cup \Gamma_2 \cup \Gamma_3 \cup \Gamma_4 \cup \Gamma_5,
	\]
	where, again in $K_1$ coordinates, we define
	\begin{equation}
		\begin{split}
			\Gamma_1 &= \Gamma_l , \\
			\Gamma_2 &= \left\{(0, r_1, 0) : r_1 \in (0, r_{d,1}) \right\}, \\
			\Gamma_3&= \left\{(0, 0, \epsilon_1) : \epsilon_1 \in (0, \epsilon_{l,1}) \right\}, \\
			\Gamma_4 &= \left\{(\Psi(\epsilon_1), 0, \epsilon_1) : \epsilon_1 \in (\epsilon_{f,1}, \epsilon_{l,1}) \right\}, \\
			\Gamma_5 &= \left\{(h_f, 0, \epsilon_1) : \epsilon_1 \in (0, \epsilon_{f,1}) \right\},
		\end{split}
	\end{equation}
	%and the orbit segment $\Gamma_1$ can be defined here by $\Gamma_1 = \Gamma_l$ (
	see Figure \ref{fig:cyl_bu}. In order to prove Theorem \eqref{thm:main}, we need to show that $\Gamma$ persists for $\epsilon \ll 1$ sufficiently small.
	
	\begin{center}
		\captionsetup{format=plain}
		\begin{figure}[t!]
			\hspace{0em}\centerline{\includegraphics[scale=0.6]{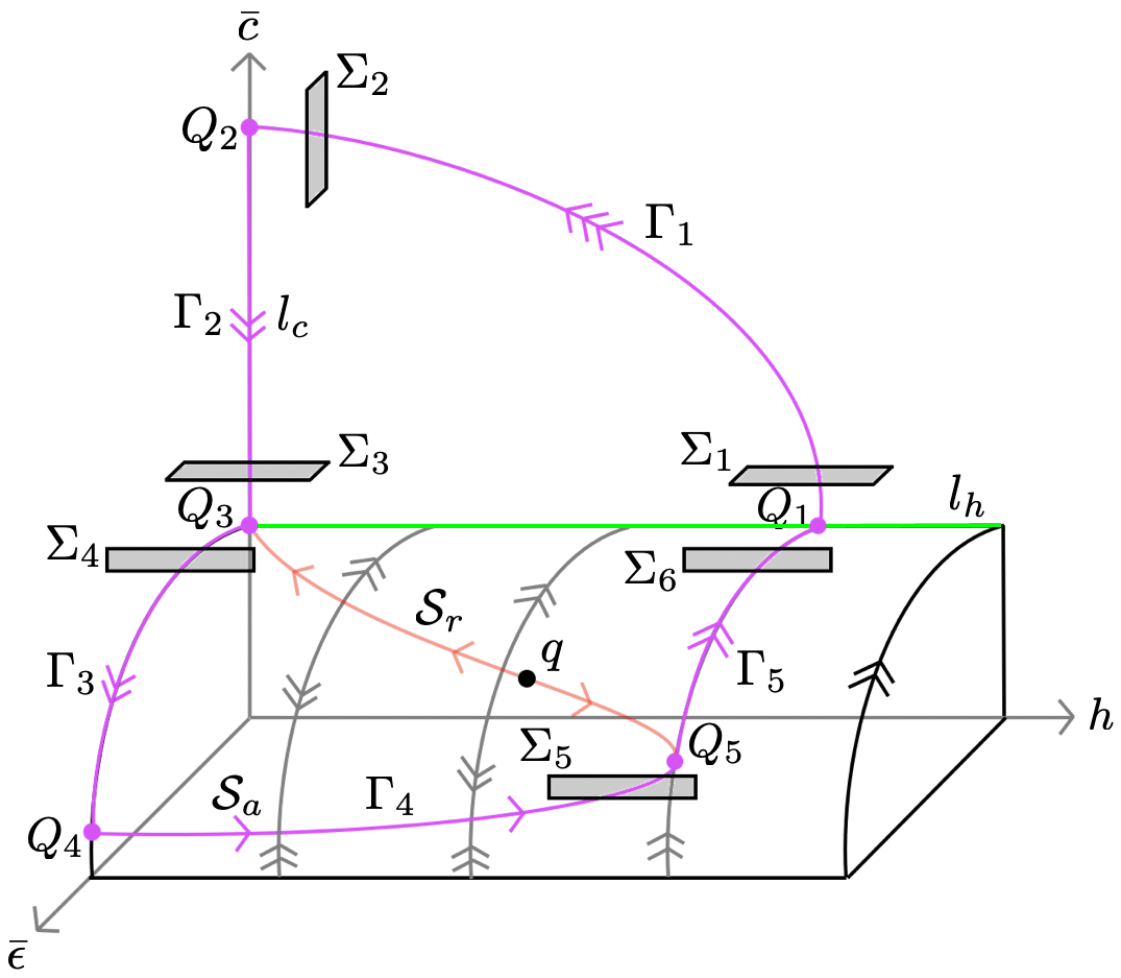}}
			\caption{Main dynamical features after cylindrical blow-up of the degenerate line $S_h$. The dynamics observed in the analysis in scaling regime (R1) correspond to the dynamics bounded `above' the cylinder, while the dynamics in scaling regime (R2) appear on the cylinder itself. The singular relaxation cycle $\Gamma = \Gamma_1 \cup \Gamma_2 \cup \Gamma_3 \cup \Gamma_4 \cup \Gamma_5$ is shown in magenta, and the segments $\Sigma_i$, $i = 1 , \ldots 6$ used to define the Poincaré map in Section \ref{sec:poincare_map} are also shown. Although both are indicated with double arrows, we note that the speed along $\Gamma_2$ is not the same as the speed along $\Gamma_5$ (see Remark \ref{rem:time-scales}).}\label{fig:cyl_bu}
		\end{figure}
	\end{center}
	
	%Our aim is to prove the following result for the blown-up system, which immediately implies Theorem \ref{thm:main} upon application of the blow-down map.
	
	%\begin{theorem}\label{thm:main_bu} Given Assumptions 1, there exsits $\epsilon_0 > 0$ such that the blown-up vector field $\bar X$ has a family of attracting periodic orbits $\bar \Gamma_\epsilon$ parameterised by $\epsilon\in(0,\epsilon_0]$, which converges to the singular cycle $\Gamma$ in the Hausdorff distance as $\epsilon\to0$.
	%\end{theorem}
	
	%To prove \ref{thm:main_bu} we define a section $\Sigma_1$ transversal to $\Gamma$, and show that the Poincaré map $\pi : \Sigma_1 \to \Sigma_1$ induced by the flow is a strong contraction. Existence of the family of limit cycles $\bar \Gamma_\epsilon$ then follows by the contraction mapping principle.

	\subsection{Poincaré map}
	\label{sec:poincare_map}

	We define a number of sections transversal to $\Gamma$, expressed in chart $K_1$ coordinates as follows:
	\begin{equation}\label{eq:sections}
		\begin{split}
			\Sigma_1 &= \left\{(h, \rho_1, \epsilon_1) : |h - h_f| \leq \alpha_1, \epsilon_1 \in [0, \beta_1] \right\}, \\
			\Sigma_2 &= \left\{(\nu_1, r_1, \epsilon_1) : |r_1 - r_{d,1}| \leq \rho_2, \epsilon_1 \in [0, \beta_2] \right\}, \\
			\Sigma_3 &= \left\{(h, \rho_1, \epsilon_1) : |h| \leq \alpha_2, \epsilon_1 \in [0, \beta_3] \right\}, \\
			\Sigma_4 &= \left\{(h, r_1, \beta_2) : |h| \leq \alpha_2, r_1 \in [0, \rho_3] \right\}, \\
			\Sigma_5 &= \left\{(h, r_1, \epsilon_{f,1} + \beta_3) : |h - h_f| \leq \alpha_3, r_1 \in [0, \rho_4] \right\}, \\
			\Sigma_6 &= \left\{(h, r_1, \beta_2) : |h - h_f| \leq \alpha_1, r_1 \in [0, \rho_5] \right\} ,
		\end{split}
	\end{equation}
	see Figure \ref{fig:cyl_bu}. We define the Poincaré map $\pi:\Sigma_1\to\Sigma_1$ induced by the flow of \eqref{eq:main_bu} by a composition of component maps
	\begin{equation}\label{eq:poincare_map}
		\pi = \pi_{6}\circ\pi_{5}\circ \pi_4\circ\pi_2 \circ\pi_2\circ\pi_1,
	\end{equation}
	where $\pi_i:\Sigma_i\to\Sigma_{i+1}$ for $i\in\{1,\ldots,5\}$, and $\pi_6:\Sigma_6\to\Sigma_1$. We consider each of the transition maps $\pi_i$ in turn. Except in the case of the map $\pi_1$, the arguments presented are similar to those in \cite{Gucwa2009}.

	\subsubsection*{The map $\pi_1:\Sigma_1\to\Sigma_2$. Boundedness argument}
	\label{ssec:map12}
	
	%\emph{Flowbox/boundedness argument.}
	We consider the map $\pi_1:\Sigma_1\to\Sigma_2$ in the original coordinates $(h,c,\epsilon)$ of the extended system \eqref{eq:main_bu}. Note that in these coordinates,
	\begin{equation}
		\begin{split}
			\Sigma_1 &= \left\{(h, \rho_1, \epsilon) : |h - h_f| \leq \alpha_1, \epsilon \in [0, \rho_1^2 \beta_1] \right\} , \\
			\Sigma_2 &= \left\{(\nu_1, c, \epsilon) : |c -  c_d| \leq \rho_2, \epsilon \in [0, (c_d + \rho_2)^2\beta_2] \right\} ,
		\end{split}
	\end{equation}
	where $c_d = r_{d,1}$. The transition map $\pi_1$ is characterised by the following result.
	
	\begin{lemma}\label{lem:map12}
		%There exists an $\epsilon_0>0$ such that for all $\epsilon \in (0,\epsilon_0)$, 
		For $\epsilon$ sufficiently small, the map $\pi_1$ is a well-defined diffeomorphism. %, and at most algebraically expanding in the variable $h$.
	\end{lemma}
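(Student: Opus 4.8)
The plan is to exploit the fact that $\pi_1$ traces the fast layer connection $\Gamma_1=\Gamma_l$ from a neighbourhood of $Q_1$ (the image of the point $Q_f\in S_h$) to a neighbourhood of $Q_2$ (the image of the drop point $Q_d\in S_c$), over an $O(1)$ time interval lying entirely in the region where $c>0$ is bounded away from the cylinder $\{r=0\}$. Since $\epsilon'=0$ in the extended system \eqref{eq:main_bu}, the parameter $\epsilon$ is conserved along orbits and $\pi_1$ acts as the identity in its $\epsilon$-component; it therefore suffices to analyse, for each fixed small $\epsilon\geq0$, the induced planar transition map and to check that it depends smoothly on $\epsilon$. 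For $\epsilon=0$ the flow reduces to the layer problem \eqref{eq:layer_R1}, while for $0<\epsilon\ll1$ the terms $\epsilon G$ constitute a regular perturbation over the relevant compact region, so the bulk of the work is at $\epsilon=0$, with the general case following by a standard Gronwall/regular-perturbation argument.

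First I would reduce the layer flow to a scalar fiber equation. On $\{h>0,\,c>0\}$ one has $h'=-\tau_{max}^{-1}c^4h<0$, so $h$ is strictly monotone and may be used as the independent variable, yielding \eqref{eq:fibers_R1}, i.e.~$\tfrac{dc}{dh}=-\tau_{max}\mathfrak{J}^{(0)}_{\text{IPR}}(h,c)$. Because the denominator of $\mathfrak{J}^{(0)}_{\text{IPR}}$ is bounded below by $k_\beta K_p^2K_c^4>0$ on the physiological domain, $\mathfrak{J}^{(0)}_{\text{IPR}}$ is smooth and uniformly bounded there, whence $|dc/dh|\leq\tau_{max}M$ for a constant $M$. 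This is the \emph{boundedness argument}: it rules out any finite-$h$ escape of $c$ and guarantees that $c(h)$ stays bounded, so that the fiber is defined on the whole interval $h\in[\nu_1,h_f]$ down to the level of $\Sigma_2$.

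Next I would trap the fiber between $c=\rho_1$ and $c=c_{max}:=\tfrac{\gamma}{1+\gamma}c_t$. By \eqref{eq:fibers_R1} the line $c=c_{max}$ is invariant for the layer flow (there $\mathfrak{J}^{(0)}_{\text{IPR}}\equiv0$, hence $c'\equiv0$), while $dc/dh<0$ strictly for $0<c<c_{max}$. Thus a fiber entering $\Sigma_1$ at $c=\rho_1<c_{max}$ has $c$ strictly increasing as $h$ decreases, cannot cross $c=c_{max}$ by uniqueness, and therefore remains in the compact set $\rho_1\leq c<c_{max}$, $\nu_1\leq h\leq h_f$. On this set the transit time $t=\tau_{max}\int_{\nu_1}^{h_f}\tfrac{dh}{c(h)^4h}$ is finite, the landing value satisfies $c_d\in(\rho_1,c_{max})$ and is strictly positive, and the flow is transversal to both $\Sigma_1$ (where $c'>0$) and $\Sigma_2$ (where $h'<0$). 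This simultaneously establishes existence of the drop point $Q_d=(0,c_d)$ with $c_d>0$, as promised in Remark \ref{rem:return}, and well-definedness of $\pi_1$ at $\epsilon=0$.

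Finally, the diffeomorphism property follows from standard flow-box arguments: transversality to $\Sigma_1,\Sigma_2$ together with smooth dependence of solutions on initial conditions makes the transition map smooth (the transit time being smooth by the implicit function theorem), and its inverse is the corresponding backward transition map, so $\pi_1$ is a diffeomorphism onto its image for $\epsilon=0$; preservation of $\epsilon$ promotes this to the full map $(h,\epsilon)\mapsto(c,\epsilon)$. Because the transit time is $O(1)$ and the orbit stays bounded away from every degenerate set, the $\epsilon G$ terms perturb the flow, and hence $\pi_1$, by $O(\epsilon)$, so the diffeomorphism property persists for all sufficiently small $\epsilon>0$. The only genuinely non-routine step is the trapping/boundedness estimate controlling the curvature of the layer fibers; the remainder is regular-perturbation bookkeeping.
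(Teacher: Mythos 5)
Your proposal is correct and follows essentially the same route as the paper's proof: the paper desingularises time via $dt = c^{-4}h^{-1}\,dT$ so that $h$ becomes affine in the new time and the fibers are governed by \eqref{eq:fibers_R1}, bounds the original transition time by an integral of $1/(c^4h)$ over a compact box with $h,c$ bounded away from zero, and then invokes the flowbox theorem and regular perturbation for $0<\epsilon\ll1$ --- which is exactly your reduction to $dc/dh=-\tau_{max}\mathfrak{J}^{(0)}_{\text{IPR}}$ followed by the same time estimate and perturbation step. The one place you go beyond the paper is the explicit trapping argument using the invariant line $c=\tfrac{\gamma}{1+\gamma}c_t$ together with monotonicity of $c$ along the fiber: the paper simply posits a compact box $R\supset\Sigma_{1,2}$ with $h_-,c_->0$ containing the orbit, whereas your argument actually justifies that the orbit stays in such a box and, as a by-product, establishes the existence of the drop point $Q_d=(0,c_d)$ with $c_d>0$ that Remark~\ref{rem:return} defers to this lemma.
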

	
	\begin{proof}
		We consider the auxiliary problem to the layer problem for the extended system \eqref{eq:main_bu},
		\begin{equation}\label{eq:auxilliary_layer}
			\begin{pmatrix}
				{h'} \\
				{c'} \\
				{\epsilon'}
			\end{pmatrix}
			=
			\begin{pmatrix}
				- \tau_{\rm max}^{-1} \\
				\mathfrak{J}^{(0)}_{\text{IPR}}(h,c) \\
				0
			\end{pmatrix},
		\end{equation}
		obtained after making the time desingularisation $dt = c^{-4}h^{-1} dT$. Note that the dash notation now refers to differentiation by the new time $T$. System \eqref{eq:auxilliary_layer} is equivalent to the layer problem \eqref{eq:main_bu}$|_{\epsilon = 0}$ on the relevant domain $h,c>0$. The transition time $T_{h_0}$ for a trajectory with initial condition $(h_0,\rho_1,0) \in \Sigma_1$ to reach a point $(\nu_1, \tilde c, 0) \in \Sigma_2$ can be determined by integrating the equation for $h'$. We obtain
		\[
		h(T)=- \tau_{\rm max}^{-1} T+h_0 \qquad \implies \qquad T_{h_0} = \tau_{\rm max} (h_0 - \nu_1) , 
		\]
		where $\nu_1$ is chosen so that $T_{h_0} >0$. Now fix a compact box
		\[
		R=\{(h,c) : h\in[h_-,h_+], c\in[c_-,c_+]\},
		\]
		such that $h_-,c_->0$, and $\Sigma_{1,2}\subset R$. We obtain an upper bound for the corresponding transition time $t_{h_0}$ for the layer problem \eqref{eq:main_bu}$|_{\epsilon = 0}$ via
		\[
		t_{h_0} = \int_0^{t_{h_0}} dt = \int_0^{T_{h_0}} \frac{1}{c^4(T)h(T)} dT \leq \frac{{T_{h_0}}}{c_-^4h_-} < \infty.
		\]
		It follows that $\pi_1|_{\epsilon = 0}$ is a well-defined diffeomorphism. Since the flow in $R$ is regular, it follows by regular perturbation theory and the flowbox theorem that for $\epsilon\ll1$ sufficiently small, $\pi_1$ is also a well-defined diffeomorphism.% for $\epsilon \geq 0$ sufficiently small, which is `at worst' algebraically expanding.
	\end{proof}

	\subsubsection*{The map $\pi_2:\Sigma_2\to\Sigma_3$. Fenichel theory in $(h,c,\epsilon)-$coordinates}
	\label{ssec:map23}
	%\emph{Fenichel theory.}
	The map $\pi_2$ is characterised by the following result, stated in terms of the coordinates $(h, c, \epsilon)$.
	
	\begin{lemma}\label{lemma:map45}
		For $\epsilon>0$ sufficiently small, the map $\pi_2$ is well-defined and the $c$-component of the map is contracting with rate $e^{-a_2/\epsilon}$, for a constant $a_2>0$.
	\end{lemma}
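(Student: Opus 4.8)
The plan is to treat $\pi_2$ as a standard Fenichel passage along the attracting critical manifold $S_c$, carried out directly in the $(h,c,\epsilon)$-coordinates of the extended system \eqref{eq:main_bu}, exactly as the subsection title suggests. First I would restrict attention to the compact piece of $S_c$ on which $c\in[\rho_1,c_d+\rho_2]$. By \eqref{eq:evs_R1} the nontrivial eigenvalue there is $\lambda_c(c)=-\tau_{max}^{-1}c^4$, which is strictly negative and bounded away from zero on this interval, since $\rho_1>0$ is a fixed constant independent of $\epsilon$ (this cut-off is precisely what keeps us bounded away from the degenerate origin $Q_3$, where the blow-up is needed). Hence this piece of $S_c$ is uniformly normally hyperbolic and attracting, and Lemma \ref{lem:slow_manifolds_R1} applies: for $\epsilon$ small it perturbs to an attracting one-dimensional slow manifold $S_{c,\epsilon}$ carrying an exponentially contracting stable foliation (Fenichel \cite{fenichel1979geometric}).

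Next I would establish that $\pi_2$ is well-defined. A point of $\Sigma_2$ (which sits at $h=\nu_1$, just above $S_c$ near the drop value $c=c_d$) lies on a stable fiber and is attracted exponentially to $S_{c,\epsilon}$; on $S_{c,\epsilon}$ the leading-order flow is the reduced flow \eqref{eq:reduced_R1}, $\dot c=-\mathfrak J^+_{\text{SERCA}}(c)<0$, which decreases $c$ monotonically and reaches $c=\rho_1$ in finite slow time $\tau^\ast=\int_{\rho_1}^{c_d}\mathfrak J^+_{\text{SERCA}}(c)^{-1}\,dc$. Since the sections are transverse to $\Gamma$ and $\Sigma_3$ (at $c=\rho_1$) is transverse to the reduced flow, this gives for $\epsilon$ sufficiently small a well-defined smooth transition map $\pi_2:\Sigma_2\to\Sigma_3$.

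For the contraction estimate I would exploit that $S_{c,\epsilon}$ is one-dimensional: every trajectory entering through $\Sigma_2$ is squeezed onto $S_{c,\epsilon}$, which meets $\Sigma_3$ in the single point $h=\tilde\nu(\rho_1,\epsilon)=O(\epsilon)$. Consequently two entry points differing only in their $c$-coordinate exit $\Sigma_3$ at points separated by at most their residual distance to $S_{c,\epsilon}$, which the stable foliation contracts by the accumulated factor $\exp\!\big(\tfrac{1}{\epsilon}\int_0^{\tau^\ast}\lambda_c(c(\tau))\,d\tau\big)$, where the $\tfrac1\epsilon$ reflects the conversion from the fast passage time $t^\ast=\tau^\ast/\epsilon$ to slow time $\tau=\epsilon t$. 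Since $\lambda_c<0$ uniformly and $\tau^\ast=O(1)$, this factor is bounded by $e^{-a_2/\epsilon}$ for any constant $a_2$ below $\int_{\rho_1}^{c_d}\tau_{max}^{-1}c^4\,\mathfrak J^+_{\text{SERCA}}(c)^{-1}\,dc>0$, the slack absorbing the $O(\epsilon)$ corrections to the eigenvalue and the slow flow. This yields $\|\partial_c\pi_2\|=O(e^{-a_2/\epsilon})$, i.e.\ the claimed contraction of the $c$-component.

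The argument is essentially routine Fenichel theory, so I do not anticipate a genuine obstacle; the only points needing care are (i) confirming uniform normal hyperbolicity all the way down to the fixed cut-off $c=\rho_1$, and (ii) turning the abstract Fenichel contraction along the stable foliation into the explicit exponential rate $e^{-a_2/\epsilon}$ via the integrated eigenvalue, while carefully tracking the distinction between the fast time $t$ and the slow time $\tau=\epsilon t$.
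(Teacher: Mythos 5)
Your argument is correct and follows essentially the same route as the paper: Fenichel theory gives exponential attraction of $\Sigma_2$-initial conditions to the slow manifold $S_{c,\epsilon}$ of Lemma \ref{lem:slow_manifolds_R1}, the reduced flow \eqref{eq:reduced_R1} carries trajectories monotonically down to $\Sigma_3$ in finite slow time (giving well-definedness), and the exponential contraction again comes from Fenichel theory. The only difference is that you make the rate explicit by integrating the nontrivial eigenvalue $\lambda_c(c)=-\tau_{max}^{-1}c^4$ along the slow passage, which the paper leaves implicit in its citation of \cite{fenichel1979geometric}.
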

	
	\begin{proof}
		Given $\nu_1>0$ sufficiently small, Fenichel theory implies that solutions with initial conditions in $\Sigma_2$ are exponentially attracted to their base points on the slow manifold $S_{c,\epsilon}$ described in Lemma \ref{lem:slow_manifolds_R1}. Trajectories follow the slow flow on $S_{c,\epsilon}$ until they reach $\Sigma_3$. Hence the map $\pi_2:\Sigma_2 \to \Sigma_3$ is well-defined. Exponential contraction in the $c-$component follows by Fenichel theory \cite{fenichel1979geometric}.
	\end{proof}

	\subsubsection*{The map $\pi_3:\Sigma_3\to\Sigma_4$. Spherical blow-up of $Q_3$}
	\label{ssec:map34}
	
	%\emph{Spherical blow-up}.
	Let $R_3\subset\Sigma_3$ be an arbitrarily small rectangle centered at the point $(0, \rho_1, 0)$ in chart $K_1$ coordinates, i.e., at the intersection $\Gamma \cap \Sigma_3$. The transition map $\pi_3: \Sigma_3 \to \Sigma_4$ is characterised by the following result.
	
	\begin{lemma}\label{lem:map34} Given $\rho_1 > 0$ and $R_3 \subset \Sigma_3$ sufficiently small, the restricted map $\pi_3|_{R_3} = \pi_{3,R_3}$ is $C^1$-smooth with the following properties:
		\begin{itemize}
			\item[(i)] The extension of the manifold $M$ described in Lemma \ref{lem:K1_extended_manifold} intersects $\Sigma_4$ in a $C^1$-curve $\sigma_4$, which is tangent to $r_1=0$.
			\item[(ii)] Each restricted map $\pi_{3,R_3}|_{\epsilon_1 = const}$ is a strong contraction with rate $e^{-a_3/\epsilon_1}$ for a constant $a_3>0$
			%as the initial coordinate $\epsilon_{1,in} \to 0$.
			\item[(iii)] The image $\pi_3(R_3)$ is an exponentially thin wedge about $\sigma_4$ in $\Sigma_4$.
		\end{itemize}
	\end{lemma}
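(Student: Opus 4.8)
The plan is to resolve the passage around the nilpotent corner point $Q_3 = (0,0,0)$, at which the three curves $l_{h,1}$, $l_{c,1}$ and $\mathcal S_1$ meet and where, by Lemma \ref{lem:K1_dynamics}(iv), all eigenvalues vanish, so that neither Fenichel theory nor the regular flowbox argument used for $\pi_1$ and $\pi_2$ applies. Geometrically, the singular cycle enters a neighbourhood of $Q_3$ along $\Gamma_2 \subset l_{c,1}$ (with $r_1$ strictly decreasing toward $0$ by Lemma \ref{lem:K1_extended_manifold}, and $\epsilon_1 \approx 0$) and leaves along $\Gamma_3$ up the invariant cylinder $\{r_1=0\}$ (with $\epsilon_1$ increasing, since there $\epsilon_1' = 2 A_{\text{SERCA}} \epsilon_1^2 + O(\epsilon_1^3) > 0$). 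Thus $\pi_3$ is precisely the transition map that turns this corner. To desingularise it I would apply a successive (weighted) spherical blow-up of $Q_3$ in $(h,r_1,\epsilon_1)$-space, whose detailed construction and chart-by-chart analysis is carried out in Section \ref{sec:spherical_blowup}; the argument parallels the treatment of the analogous degenerate corner in the autocatalator of \cite{Gucwa2009}.

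With the blow-up in place, I would cover the passage by entry, rescaling and exit charts, chosen so that the entry chart captures the trace of $\Sigma_3$ and the exit chart captures the trace of $\Sigma_4$. The essential gain is that, after blow-up, the relevant invariant objects, namely the attracting image of $l_{c,1}$, the attracting branch $\mathcal S_{a,1}$, and in particular the centre manifold $M$ of Lemma \ref{lem:K1_extended_manifold}, become (partially) hyperbolic on the blow-up locus and connect through it. Tracking the forward extension of $M$ chart-by-chart then yields claim (i): the extension flows through the blow-up sphere along the attracting directions and meets $\Sigma_4$ in a $C^1$ curve $\sigma_4$, and its tangency to $\{r_1=0\}$ is inherited from the fact that the extension approaches $\Sigma_4$ asymptotic to the invariant cylinder $\{r_1=0\}$ together with $\mathcal S_{a,1}$.

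For claim (ii) I would estimate the contraction accumulated during the passage directly in the blow-up charts. Since on $\Sigma_3$ one has $\epsilon_1 = \epsilon / \rho_1^2$, fixing $\epsilon_1$ amounts to fixing $\epsilon$ and foliates the passage; along each such leaf trajectories are exponentially attracted toward the blown-up attracting manifolds, and the standard estimates for the contraction accumulated in passing a (now normally hyperbolic) attracting manifold yield the rate $e^{-a_3/\epsilon_1}$, which is the expected exponential-in-$1/\epsilon$ contraction expressed in the chart-$K_1$ variable. Claim (iii) is then immediate: combining the $C^1$ curve $\sigma_4$ from (i) with the transverse contraction rate from (ii), the image $\pi_3(R_3)$ collapses onto an exponentially thin wedge about $\sigma_4$, whose opening is controlled by the range of $\epsilon_1$ over $R_3$.

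I expect the main obstacle to be the blow-up analysis itself: selecting weights that simultaneously resolve the degeneracy of $Q_3$ and render both the incoming branch ($l_{c,1}$) and the outgoing branches ($\mathcal S_{a,1}$ and the cylinder) partially or normally hyperbolic, and then verifying the $C^1$ matching of the extension of $M$ across the chart transitions. The uniform contraction estimate in (ii) is the most delicate point, since the relevant attraction rate degenerates as $\epsilon_1 \to 0$ and one must extract the precise $e^{-a_3/\epsilon_1}$ dependence rather than a weaker bound; this is exactly where the deferred results of Section \ref{sec:spherical_blowup} are invoked.
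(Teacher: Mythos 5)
Your overall strategy coincides with the paper's: the proof is indeed carried out by a spherical blow-up of the nilpotent point $Q_3$, followed by a chart-by-chart decomposition of $\pi_3$ into an entry map near the attracting endpoint of $l_{c,1}$, a regular transition across the blow-up sphere, and an exit map (in the paper, $\pi_3 = \pi_{4,1}\circ\pi_{3,2}\circ\pi_{3,1}$, Section \ref{sec:spherical_blowup}), and the contraction in (ii) is accumulated via the strong stable foliation of the centre manifold from Lemma \ref{lem:K1_extended_manifold} during the slow approach along $l_{c,1}$, essentially as you describe.

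There is, however, a genuine gap in your argument for the tangency of $\sigma_4$ to $\{r_1=0\}$ in claim (i), and in the $C^1$ smoothness of the exit passage. You assert that tangency is ``inherited from the fact that the extension approaches $\Sigma_4$ asymptotic to the invariant cylinder,'' but this does not follow: a two-dimensional invariant manifold can accumulate on the cylinder while its trace in a transverse section meets $\{r_1=0\}$ transversally. The actual mechanism in the paper is twofold. First, one must compute the tangent direction of the extended centre manifold at the exit of the sphere-crossing by integrating the variational equations along the heteroclinic orbit $\gamma$ connecting $p_c$ to the hyperbolic saddle $p_s$ on the blow-up sphere (Lemma \ref{lem:map3241}, the vector $t_Q$ in \eqref{eq:tangent_vector}). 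Second, and crucially, the passage near $p_s$ — whose desingularised eigenvalues are $0$-resonance-prone integers $-2,-3,2$ in system \eqref{eq:Ks2_desingularised_equations} — produces a transition map of the form $(h_2,\beta_2,s_2)\mapsto \left(O(h_2 s_2/\beta_2),\,\beta_2 (s_2/\beta_2)^2,\,\beta_2\right)$; it is the quadratic dependence of the $r$-coordinate on $s_2$ in this saddle passage that flattens the image onto $\{r_1=0\}$ and yields the tangency. Establishing that this transition is $C^1$ (so that $\pi_3$ is $C^1$ and $\sigma_4$ is a $C^1$ curve) is itself nontrivial because of the resonances; the paper invokes Belitskii's theorem for a $C^1$ normal form near $p_s$ (Lemma \ref{lem:map4142}). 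Your proposal does not identify the saddle $p_s$, its exit passage, or the need for this linearisation result, so claim (i) is asserted rather than proved.
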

	
	\begin{proof}
		The proof relies on a spherical blow-up of the nilpotent point $Q_3$. For expository reasons, we defer the proof to Appendix \ref{sec:spherical_blowup}.
	\end{proof}

	\subsubsection*{The map $\pi_4:\Sigma_4\to\Sigma_5$. Fenichel theory in chart $K_2$}
	\label{ssec:map45}
	
	%\emph{Fenichel in chart $K_2$.}
	The transition map $\pi_4:\Sigma_4\to\Sigma_5$ is analysed in chart $K_2$. We may use the fact that $r_2 = \delta$, however, in order to describe the result in terms of the regime (R2) analysis given in Section \ref{ssec:singular_limit_analysis_R2}.
	% In the following result, the subscript notation in $\mathcal S_{a,\delta,2}$ and $\mathcal S_{r,\delta,2}$ indicates that the manifolds in Lemma \ref{lem:slow_manifolds_R2} are being considered in chart $K_2$.
	
	\begin{lemma}\label{lemma:61map}
		Given fixed $\rho_1 > 0$, there exists $\beta_2 > 0$ such that the map $\pi_4$ is well-defined with the following properties:
		\begin{itemize}
			\item[(i)] Each restricted map $\pi_4|_{\delta= const}$ is a strong contraction with rate $O(e^{-a_4/\epsilon^2})$ for a constant $a_4>0$.
			\item[(ii)] The image $\pi_4(\Sigma_4)$ is an exponentially thin wedge in $\Sigma_5$, which is exponentially close to the smooth curve formed by the intersection $\mathcal S_{a,\delta,2} \cap \Sigma_5$.
		\end{itemize}
	\end{lemma}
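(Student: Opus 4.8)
The plan is to establish all three claims by a single application of Fenichel theory carried out in the family rescaling chart $K_2$. Recall from Section~\ref{sec:dynamics_in_charts} that in this chart, after the time desingularisation $dt = \delta^{-3}dt_1$ and with $C = c_2$ and $\delta = r_2 = \sqrt\epsilon$, the flow is governed by the standard-form slow--fast system \eqref{eq:main_R2}, whose layer problem \eqref{eq:layer_R2} possesses the normally hyperbolic attracting critical manifold $\mathcal S_a \subset \mathcal S$ from \eqref{eq:S_R2}. First I would translate the sections $\Sigma_4$ and $\Sigma_5$ of \eqref{eq:sections} into chart $K_2$ using the transition map $\kappa_{12}$, under which $\epsilon_1 = C^{-2}$: the entry section $\Sigma_4$ (at $\epsilon_1 = \beta_2$) corresponds to $C = \beta_2^{-1/2}$, and the exit section $\Sigma_5$ (at $\epsilon_1 = \epsilon_{f,1}+\beta_3$) to a value of $C$ just below $C_F$. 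I would then fix $\beta_2 \in (\epsilon_{f,1},\epsilon_{l,1})$, which guarantees that the entry value $C = \beta_2^{-1/2}$ lies strictly on the attracting branch $\mathcal S_a$, and then choose $\beta_3$ small, so that the arc swept out by $\pi_4$ --- from $C = \beta_2^{-1/2}$ up to $C = (\epsilon_{f,1}+\beta_3)^{-1/2}$ --- is a compact subarc of $\mathcal S_a$ bounded away from both the fold $F$ of \eqref{eq:Fold} and the degenerate point at $C = 0$.

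With this geometry in place, the second step is to invoke Lemma~\ref{lem:slow_manifolds_R2}, which provides the perturbed attracting slow manifold $\mathcal S_{a,\delta}$ (with chart $K_2$ representation $\mathcal S_{a,\delta,2}$) over exactly such a compact subarc. Fenichel theory \cite{fenichel1979geometric} then yields the qualitative picture underlying all three assertions: trajectories entering through $\Sigma_4$ are exponentially attracted to $\mathcal S_{a,\delta,2}$, contract onto their base points, and are then carried by the slow flow to $\Sigma_5$, so that $\pi_4$ is well defined. Because the attraction is transverse and $\mathcal S_{a,\delta,2}$ meets $\Sigma_5$ in a single smooth curve, the image $\pi_4(\Sigma_4)$ is squeezed into an exponentially thin wedge around $\mathcal S_{a,\delta,2}\cap\Sigma_5$, giving (ii); the claimed exponential closeness is a direct consequence of the contraction estimate together with the $O(\delta)$ proximity of $\mathcal S_{a,\delta,2}$ to $\mathcal S_a$ recorded in Lemma~\ref{lem:slow_manifolds_R2}.

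The remaining quantitative point, claim (i), follows from the standard Fenichel contraction estimate applied along this passage: the transverse contraction is controlled by the nontrivial eigenvalue \eqref{EV_R2}, which on the chosen compact subarc is negative and bounded away from both $0$ and $-\infty$, and the contraction of $\pi_4|_{\delta = \mathrm{const}}$ is the exponential of the integral of this rate over the transition time. Since the passage along $\mathcal S_{a,\delta}$ occupies an $O(1)$ interval of the infra-slow time $\tau_1 = \epsilon^2 t$ and hence an $O(\epsilon^{-2})$ interval of the original time $t$ (see Remark~\ref{rem:timescales}), the accumulated contraction is of the order claimed, giving the strong contraction $O(e^{-a_4/\epsilon^2})$. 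The main obstacle I anticipate is not dynamical but one of uniformity and bookkeeping: one must verify that the estimates furnished by Lemma~\ref{lem:slow_manifolds_R2} and Fenichel theory in chart $K_2$ hold uniformly up to the sections $\Sigma_4,\Sigma_5$ --- in particular that $\Sigma_5$, placed a fixed distance $\beta_3$ above the fold, is reached strictly before the breakdown of normal hyperbolicity at $F$ (which is treated separately in the analysis of $\pi_5$) --- and that the transition map $\kappa_{12}$ transports these estimates back to the chart $K_1$ coordinates used in \eqref{eq:sections} without degrading the stated rate.
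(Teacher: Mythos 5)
Your argument is essentially the paper's proof: both rest on Lemma~\ref{lem:slow_manifolds_R2} and Fenichel theory in chart $K_2$ (exponential attraction of trajectories entering $\Sigma_4$ onto $\mathcal S_{a,\delta,2}$, followed by the slow flow to a transversal intersection with $\Sigma_5$), and both obtain the rate $O(\mathrm{e}^{-a_4/\epsilon^2})$ in (i) from the fact that the passage along $\mathcal S_{a,\delta,2}$ takes an $O(1)$ interval of the infra-slow time $\tau_1=\epsilon^2 t$, hence $O(\epsilon^{-2})$ of the fast time over which the transverse contraction accumulates. One bookkeeping slip: your choice $\beta_2\in(\epsilon_{f,1},\epsilon_{l,1})$ is not the one compatible with the global Poincar\'e-map construction --- $\beta_2$ also fixes the height of $\Sigma_6$, which must be crossed by the fast jump segment $\Gamma_5=\{(h_f,0,\epsilon_1):\epsilon_1\in(0,\epsilon_{f,1})\}$, so the paper takes $\beta_2$ \emph{small} (in particular $\beta_2<\epsilon_{f,1}$, i.e.\ entry at $C=\beta_2^{-1/2}>C_F$); the entry points of $\Sigma_4$ are near $h=0$ rather than on $\mathcal S_a$, and they reach $\mathcal S_a$ near $C=\sqrt{K}\gamma c_t$ along the fast fibers in either case, so your Fenichel argument goes through unchanged once $\beta_2$ is placed correctly.
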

	
	\begin{proof}
		For sufficiently small $\beta_2 > 0$ our analysis in regime (R2) in Section \ref{ssec:singular_limit_analysis_R2}, in particular Lemma \ref{lem:slow_manifolds_R2}, implies that solutions with initial conditions in $\Sigma_4$ are exponentially attracted to their base points on $\mathcal S_{a,\delta,2}$, after which they follow the slow flow and intersect $\Sigma_5$ transversally. Hence $\pi_4:\Sigma_4\to\Sigma_5$ is well-defined. Statements (i)-(ii) follow by the exponential contractiveness of the slow manifolds $\mathcal S_{a,\delta,2}$. In particular, $O(e^{-a_4/\epsilon^2})$ contraction follows since solutions track the slow flow on $\mathcal S_{a,\delta,2}$ for $O(1)$ times on the infra-slow time-scale $\tilde \tau = \epsilon^2 t$; see again Remark \ref{rem:time-scales}.
	\end{proof}

	\subsubsection*{The map $\pi_5:\Sigma_5\to\Sigma_6$. Flow past the fold in chart $K_2$}
	\label{ssec:map56}

	%\emph{Regular jump point in chart $K_2$.}
	The transition map $\pi_5:\Sigma_5\to\Sigma_6$ is also considered in chart $K_2$. In these coordinates we have
	\begin{equation}
		\begin{split}
			\Sigma_5 &= \left\{\left(h, \frac{1}{(\epsilon_{f,1} + \beta_3)^2}, r_2 \right) : |h - h_{f,2}| \leq \alpha_3, r_2 \in \left[0, (\epsilon_{f,1} + \beta_3)^2 \rho_4 \right] \right\}, \\
			\Sigma_6 &= \left\{\left(h, \frac{1}{\sqrt{\beta_2}}, r_2\right) : |h - h_{f,2} | \leq \alpha_1, r_2 \in \left[0, \sqrt{\beta_2} \rho_5 \right] \right\}.
		\end{split}
	\end{equation}
	We obtain the following result.
	
	\begin{lemma}\label{lem:map12_2} %For $\Sigma_5$ fixed sufficiently small and for all sufficiently small $r_2 > 0$,
		\revsj{Fix the section $\Sigma_5$ sufficiently small. Then for all $r_2>0$ sufficiently small,~}
	%	Given fixed $\alpha_3 > 0$, there exists $\beta_1 > 0$ such that for all $r_2>0$ sufficiently small,
		the transition map $\pi_5$ is well-defined. \revsj{In particular, the } % the
		image $\pi_5(\Sigma_5) \subset \Sigma_6$ \revsj{is exponentially narrow in the $h-$coordinate ($O(e^{-a_5/r_2})$ for some $a_5>0$), and centered about the value $h = h_{f,2} + O(r_2^{2/3})$.} % is wedge-shaped with width $O(e^{-a_5/r_2})$ for fixed $a_5>0$ in the $h-$coordinate about $(h_f+O(r_2^{2/3}) , \beta^{-1/2}, r_2) \in \Sigma_6$.
	\end{lemma}
%	{\color{red}{This Lemma needs to be punctuated differently. It is too hard to parse with this format. Perhaps split the sentence at ``such that"  or replace ``such that" with ``and" and then think again about the second half of the lemma? I think you are saying that the image is wedge-shaped in the h-coordinate, with the wedge centred on the bracketed expression and with width in the h coordinate given by ... where $\alpha_5$ is a positive constant. This is hard to extract from the current sentence.}}
	
	\begin{proof}
		For $r_2 > 0$ sufficiently small, solutions with initial conditions in \revsj{$\Sigma_5$} are exponentially attracted to their base points on the slow manifolds $\mathcal S_{a,\delta,2}$ described in Lemma \ref{lem:slow_manifolds_R2}. Lemma \ref{lem:slow_manifolds_R2} also implies the extension of $\mathcal S_{a,\delta,2}$ \revsj{(and nearby trajectories)} through the neighbourhood of the fold point $Q_5$\revsj{, see again Figure \ref{fig:manifold_R2}}. \revsj{After leaving a neighbourhood of the fold, solutions follow the fast flow for finite time before intersecting $\Sigma_6$. This shows that $\pi_5$ is well-defined.} %The extended manifolds $\mathcal S_{a,\delta,2}$, and hence nearby trajectories, intersect $\Sigma_6$ transversally as shown in Figure \ref{fig:manifold_R2}, after which trajectories tracking the slow flow along $\mathcal S_{a,\delta,2}$ follow a trajectory which converges to the orbit segment $\Gamma_5$ in the limit $r_2 \to 0$. 
		%Hence for $r_2 > 0$ sufficiently small, the map $\pi_5$ is well-defined. 
		Exponential contraction follows from Fenichel theory \cite{fenichel1979geometric} and the exponential contractiveness undergone in the neighbourhood of $Q_f$. \revsj{Finally, t}he estimate %$h_f + O(r_2^{2/3})$ estimate VK: Was this included in error?
		for the $h-$component follows from Lemma \ref{lem:slow_manifolds_R2} and regular perturbation theory. %{\color{red}{Note I removed a few words from this sentence since there was clearly something wrong - Sam, please check what you intended to say.}}
	\end{proof}

	\subsubsection*{The map $\pi_6:\Sigma_6\to\Sigma_1$. Hyperbolic transition near $l_h$}
	\label{ssec:map61}

	%\emph{Resonance near the line of saddle points $l_h$.}
	%\textcolor{red}{\textbf{[This section should be redone; the resonance did not survive the new choice of rescalings etc. Resonance is expected in the $K_1$ chart only in the case that $c$ and $\epsilon$ recieve equal wieghts in the blow-up map]}.}
	The analysis of the map $\pi_6:\Sigma_6\to\Sigma_1$ is carried out in chart $K_1$. Let $p_0 = (h_{f}, 0 , \beta_2)$ denote the point of intersection at $\Gamma \cap \Sigma_1$, and let $R_6 \subset \Sigma_6$ be an arbitrarily small (but fixed) rectangle centered at $p_0$. For notational simplicity, we rewrite system \eqref{eq:K1_equations} as
	\begin{equation}\label{eq:K1_sys}
		\begin{split}
			h' &= r_1 \phi(h,\epsilon_1),             \\
			r_1' &= r_1 \psi (h,r_1,\epsilon_1), \\
			\epsilon_1' &= - 2 \epsilon_1 \psi(h,r_1,\epsilon_1),
		\end{split}
	\end{equation}
	where
	\begin{equation}
		\label{eq:K1_fns}
		\begin{split}
			\phi(h,\epsilon_1) &= \tau_{\rm max}^{-1} \left(\hat h_\infty(\epsilon_1) - h \right) \left(1 + \epsilon_1^2 \right) ,   \\
			\psi(h, r_1, \epsilon_1) &= \hat J_{\text{IPR}}(h, r_1, \epsilon_1) - \epsilon_1 \hat{\mathfrak J}^+_{\text{SERCA}}(r_1, \epsilon_1) + \epsilon_1^2 \hat{\mathfrak J}^-_{\text{SERCA}}(r_1, \epsilon_1).
		\end{split}
	\end{equation}
	For ease of computations, we translate the point $Q_1 = (h_f, 0, 0)$ to the origin via the translation $\tilde h = h - h_{f}$, obtaining the system
	\begin{equation}\label{eq:K1_new}
		\begin{split}
			\tilde h' &= r_1 \tilde \phi(\tilde h,\epsilon_1),             \\
			r_1' &= r_1 \tilde \psi (\tilde h,r_1,\epsilon_1),  \\
			\epsilon_1' &= - 2 \epsilon_1 \tilde \psi(\tilde h,r_1,\epsilon_1),
		\end{split}
	\end{equation}
	where $\tilde \phi(\tilde h,\epsilon_1)=\phi(\tilde h+h_{f},\epsilon_1)$ and $\tilde \psi (\tilde h,r_1, \epsilon_1)= \psi (\tilde h+h_{f},r_1,\epsilon_1)$. Since \SJ{$\tilde \psi (0,0,0) =  p^2 \kipr \gamma c_t h_f  / k_\beta K_c^4 K_p^2 > 0$,}  
	%and so for the purposes of a local computation, 
	%$\tilde\psi(\tilde h,\epsilon_1,r_1)$ is strictly positive in a neighbourhood of the origin, so 
	we may consider the equivalent system obtained after division by this term:
	\begin{equation}\label{eq:map_61_sys}
		\begin{split}
			\tilde h' &= r_1 \left(\frac{\tilde \phi(\tilde h,\epsilon_1)}{\tilde \psi(\tilde h,r_1,\epsilon_1)} \right), \\
			r_1' &= r_1, \\
			\epsilon_1' &= - 2 \epsilon_1.                                  
		\end{split}
	\end{equation}
	System \eqref{eq:map_61_sys} has a non-hyperbolic equilibrium at $(0,0,0)$ with eigenvalues $0,1,-2$. %Resonant terms give rise to logarithmic terms which cannot be eliminated in a normal form transformation. We are able to prove however, that
	The transition map $\pi_6:\Sigma_2\to\Sigma_1$ is characterised by the following result.
	
	\begin{lemma}\label{lem:map23} For $\beta_2 > 0$ sufficiently small and a sufficiently small rectangle $R_6 \subset \Sigma_6$ centered at $p_0$, the restricted transition map $\pi_6|_{R_6} = \pi_{6,R_6}$ is well-defined and given by
		\begin{equation}
			\label{eq:map61}
			\pi_6|_{R_6} : (h, r_1, \beta_2) \mapsto \left(h_f + O (r_1,h-h_f), \rho_1, \beta_2 \left(\frac{r_1}{\rho_1} \right)^2 \right) .
		\end{equation}
		%is well-defined, and satisfies
		%\begin{equation}\label{eq:map23_bounds}
		%\begin{split}
		%h_{1,in} - \left(\frac{\beta_2 - r_{1,in}}{\tau_{max} K_\tau^4 \hat \jipr(h_f, 0, 0)}\right)  - & c_6 \ln \left(\frac{\beta_2}{r_{1,in}}\right) \leq h_{1,out} \\
		%& \leq h_{1,in} - \left(\frac{\beta_2 - r_{1,in}}{\tau_{max} K_\tau^4 \hat \jipr(h_f, 0, 0)}\right),
		%\end{split}
		%\end{equation}
		%where the constant $c_6$ can be chosen arbitrarily small for $\beta_2$ small \textcolor{red}{\textbf{[dependent on the claim in the proof below]}}. Moreover, restrictions of $\pi_6$ to lines $r_{1,in}=const.$ are at most algebraically expanding.
		%	where $a = (\tau_{max} A_{\text{\text{IPR}}} \gamma c_t )^{-1} > 0$.
		%	\begin{equation}
		%	\label{eq:expansion_const}
		%	a = \frac{1}{K_\tau^4 \tau_{max} A_{\text{\text{IPR}}} \gamma c_t} .
		%	\end{equation}
	\end{lemma}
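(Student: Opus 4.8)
The plan is to exploit the fact that system \eqref{eq:map_61_sys} has already been brought into a form in which the $r_1$- and $\epsilon_1$-equations are linear and decoupled from $\tilde h$. First I would integrate these two equations exactly, obtaining $r_1(t)=r_1^{in}e^{t}$ and $\epsilon_1(t)=\beta_2 e^{-2t}$ for an orbit entering $R_6\subset\Sigma_6$ at $(\tilde h^{in},r_1^{in},\beta_2)$. Since $\Sigma_1$ is the section $\{r_1=\rho_1\}$, the transition time is $T=\ln(\rho_1/r_1^{in})$, well-defined and positive for $0<r_1^{in}<\rho_1$. Substituting into the $\epsilon_1$-solution (equivalently, using the blow-up invariant $\epsilon=r_1^2\epsilon_1$, which is constant along orbits) immediately yields the exit value $\epsilon_1^{out}=\beta_2(r_1^{in}/\rho_1)^2$, precisely the third component in \eqref{eq:map61}. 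This already fixes two of the three output coordinates; it remains only to control the $\tilde h$-component.

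For the $\tilde h$-equation $\tilde h'=r_1\,\tilde\phi/\tilde\psi$, the difficulty is that $T\to\infty$ as $r_1^{in}\to0$, so a direct time-integration meets an unbounded interval. The key device is to reparametrise by $r_1$: since $r_1'=r_1>0$ for $r_1>0$, dividing gives $d\tilde h/dr_1 = \tilde\phi(\tilde h,\epsilon_1)/\tilde\psi(\tilde h,r_1,\epsilon_1)$, where along the orbit $\epsilon_1=(r_1^{in})^2\beta_2/r_1^2$ by the invariant. The transition is now described by an ODE on the \emph{bounded} interval $r_1\in[r_1^{in},\rho_1]$ with a smooth, bounded right-hand side (recall from \eqref{eq:K1_fns} that $\tilde\phi,\tilde\psi$ extend smoothly to $r_1=0$ and $\tilde\psi>0$ near $Q_1$). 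Standard ODE theory then gives existence, uniqueness and boundedness of $\tilde h(r_1)$ on $[r_1^{in},\rho_1]$, so that $\pi_6|_{R_6}$ is well-defined for $\beta_2$ and $R_6$ sufficiently small.

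To obtain the form $h_f+O(r_1,h-h_f)$ and $C^1$-smoothness, I would study the limit $r_1^{in}\to0^+$. For each fixed $r_1>0$ the invariant forces $\epsilon_1\to0$, so the reparametrised equation limits onto $d\tilde h/dr_1=\tilde\phi(\tilde h,0)/\tilde\psi(\tilde h,r_1,0)$, which is exactly the desingularised layer flow whose orbit is the fast fibre $\Gamma_1=\Gamma_l$; hence the base point of the map is carried to the intersection $\Gamma\cap\Sigma_1$, identified with $h_f$. Differentiating the reparametrised equation with respect to the initial data $(\tilde h^{in},r_1^{in})$ yields a variational equation with bounded coefficients on $[0,\rho_1]$; Gr\"onwall estimates then give $\partial\tilde h^{out}/\partial\tilde h^{in}=O(1)$ and $\partial\tilde h^{out}/\partial r_1^{in}=O(1)$, the latter using that $\epsilon_1=(r_1^{in})^2\beta_2/r_1^2$ depends quadratically, hence $C^1$, on $r_1^{in}$ and vanishes as $r_1^{in}\to0$. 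This delivers both the estimate $\tilde h^{out}=O(r_1^{in})+O(\tilde h^{in})=O(r_1,h-h_f)$ and the $C^1$-regularity of $\pi_6|_{R_6}$ up to $r_1^{in}=0$.

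The main obstacle is precisely this uniform $C^1$ control as the transition time blows up; it is overcome by the $r_1$-reparametrisation, which trades the singular limit $T\to\infty$ for a regular problem on a fixed interval, with the blow-up invariant $\epsilon=r_1^2\epsilon_1$ supplying the smooth dependence of $\epsilon_1$ on the initial data. The overall structure parallels the corresponding hyperbolic-transition estimates in \cite{Gucwa2009,Krupa2001a}.
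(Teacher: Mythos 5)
Your proposal is correct and follows essentially the same route as the paper's proof: exact integration of the decoupled linear $r_1$- and $\epsilon_1$-equations gives the transition time $T=\ln(\rho_1/r_{1,in})$ and the exit value $\epsilon_{1,out}=\beta_2\left(r_{1,in}/\rho_1\right)^2$, and the $h$-component is then controlled by integrating the remaining scalar equation near $Q_1$, where $\tilde\psi>0$. Your reparametrisation by $r_1$ is just the change of variables $dr_1=r_1\,dt$ applied to the paper's direct time-integration of $\tilde h'=-ar_1+O(r_1\tilde h,r_1\epsilon_1^2)$, and your Gr\"onwall/variational argument spells out the $C^1$ control that the paper dispatches with the remark that the order terms are well-behaved under integration.
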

	
	\begin{proof}
		We consider a solution $(\tilde h,\epsilon_1,r_1)(t)$ of \eqref{eq:map_61_sys} which satisfies
		\[
		(\tilde h,r_1,\epsilon_1 ) (0) = ( \tilde h_{in}, r_{1,in}, \beta_2 ) , \qquad
		(\tilde h,r_1,\epsilon_1 ) (T) = ( \tilde h_{out}, \rho_1, \epsilon_{1,out} ) .
		\]
		\begin{comment}
		\[
		\begin{cases}
		\tilde h(0) &= \tilde h_{1,in}, \\
		\epsilon_1(0) &= \beta_2, \\
		r_1(0) &= r_{1,in},
		\end{cases}
		\qquad
		\begin{cases}
		\tilde h(T) = \tilde h_{1,out}, \\
		\epsilon_1(T) = \epsilon_{1,out}, \\
		r_1(T) = \rho_1.
		\end{cases}
		\]
		\end{comment}
		Direct integration yields $\epsilon_1(t) = \beta_2 e^{- 2 t}$ and $r_1(t) = r_{1,in} e^t$, which leads to an expression for the transition time
		\[
		T = \ln \left(\frac{\rho_1}{r_{1,in}}\right) ,
		\]
		proving that
		\[
		\pi_6 : (h_{1,in},r_{1,in},\beta_2) \mapsto \left(h_{1,out}, \rho_1, \beta_2 \left(\frac{r_{1,in}}{\rho_1} \right)^2 \right).
		\]
		Expanding the equation for $\tilde h'$ in \eqref{eq:map_61_sys} about $(0,0,0)$ gives
		\[
		\tilde h'=-ar_1+O\left(r_1\tilde h,r_1 \epsilon_1^2\right),
		\]
		where $a = (\tau_{\rm max} A_{\text{\text{IPR}}} \gamma c_t )^{-1} > 0$. 
		%CLAIM: there exists some constant $c_6$ such that $dr_1^2+\mathcal{O}(r_1\tilde h^2,r_1\tilde h \epsilon_1,r_1,\epsilon_1^2,r_1^3)<c_6$ on the relevant domain, with $\beta_2$ chosen sufficiently small
		%If the preceding claim is true, we have that
		%\[
		%- a r_1 - c_6 \leq \tilde h' \leq - a r_1,
		%\]
		%from which one obtains the inequality \eqref{Map23Bounds} after integration.
		Since the expression for $\tilde h'$ in \eqref{eq:map_61_sys} is $C^1$, the order is well-behaved with respect to integration. The expression in \eqref{eq:map61} follows after direct integration and a coordinate translation $h = \tilde h + h_f$ which undoes the earlier transformation.
		%	
		%	\textcolor{red}{\textbf{[Needs to be completed: still need to prove the claim (or something similar to it) and provide a Lipschitz argument to prove $\pi_2$ is at most algebraically expanding on lines $r_{1,in}=const.$.]}}
	\end{proof}

	\subsection{Proof of Theorem \ref{thm:main}}
	\label{ssec:proof_of_theorem}
	
	\begin{proof}
		By the analysis presented in Section \ref{ssec:map12}, the Poincaré map $\pi:\Sigma_1\to\Sigma_1$ defined by the composition in \eqref{eq:poincare_map} is well-defined. Note that one must also include coordinate changes between charts in expression \eqref{eq:poincare_map}.
		
		Because $\epsilon$ is a constant of the motion in \eqref{eq:main_bu}, the lines $\epsilon={\rm const}$ are invariant under $\pi$. Since the relevant components of the restricted maps $\pi_i|_{\{\epsilon={\rm const}\}}$ are exponentially contracting for $i \in \{2,3,4,5\}$, % and at most algebraically expanding for $i \in \{1,2\}$, 
		it follows that the $h-$component of the restricted map $\pi|_{\{\epsilon={\rm const}\}}$ is also exponentially contracting. By the contraction mapping theorem, each $\pi|_{\{\epsilon={\rm const}\}}$ has a unique fixed point corresponding to an exponentially attracting periodic orbit $\Gamma_\epsilon$, and the family $\Gamma_\epsilon$ converges in the Hausdorff distance to the singular cycle $\Gamma$ as $\epsilon \to 0$. Theorem \ref{thm:main} follows after applying the blow-down transformation associated with the map \eqref{eq:cylindrical_bu}. 
		The $O(\epsilon^{1/3})$ separation follows from Lemma \eqref{lem:map12_2} after applying $r_2 = \sqrt{\epsilon}$; see again the discussion immediately following the statement of Theorem \ref{thm:main}. The $-\kappa/\epsilon^2$ bound on the Floquet exponent follows from Lemma \ref{ssec:map45}.
	\end{proof}

	\subsection{Proof of Lemma \ref{lem:map34}}
	\label{sec:spherical_blowup}

	In this section we prove Lemma \ref{lem:map34}. We start in $K_1$ coordinates with system \eqref{eq:K1_sys}, and drop the subscripts for notational convenience, i.e., we consider
	\begin{equation}
		\begin{split}
			h' &= r \phi(h,\epsilon),                   \\
			r' &= r \psi (h,r,\epsilon), \\
			\epsilon' &= - 2 \epsilon \psi(h,r,\epsilon), 
		\end{split}
	\end{equation}
	with $\phi$ and $\psi$ as defined in \eqref{eq:K1_fns}. We are interested in the dynamics near the nilpotent singularity $Q_3 = (0,0,0)$, where the Jacobian has an eigenvalue $\lambda = 0$ of multiplicity three; recall Lemma \ref{lem:K1_dynamics}. In order to resolve this, we define a spherical blow-up by the transformation
	\begin{equation}
		\label{eq:spherical_bu}
		s \geq 0, \ \left(\bar h, \bar r, \bar \epsilon \right) \in S^2 \mapsto
		\begin{cases}
			h = s \bar h , \\
			r = s \bar r , \\
			\epsilon = s \bar \epsilon ,
		\end{cases}
	\end{equation}
	which maps the point $Q_3$ to the sphere $\{s = 0\} \times S^2$. \SJ{We work in coordinate charts defined via $\mathcal K_1: \bar r=1$ and $\mathcal K_2: \bar\epsilon=1$, with} chart-specific coordinates
	\begin{equation}\label{eq:chart_coordinates_sphere}
		\begin{aligned}
			\mathcal K_1: \ h &= s_1 h_1, && r = s_1,  && \epsilon = s_1 \epsilon_1 , \\
			\mathcal K_2: \ h &= s_2 h_2, && r = s_2 r_2,  && \epsilon = s_2.
		\end{aligned}
	\end{equation}
	The transition maps between charts $\mathcal K_1$ and $\mathcal K_2$ are given by
	\begin{equation}
		\begin{aligned}
			\label{eq:sphere_trans_maps}
			\kappa_{12}: \ h_1 &= r_2^{-1}h_2, && s_1=r_2 s_2, && \epsilon_1=r_2^{-1}, \\
			\kappa_{21}: \ h_2 &= \epsilon_1^{-1}h_1, &&   r_2=\epsilon_1^{-1}, && s_2=s_1\epsilon_1.
		\end{aligned}
	\end{equation}
	Note that in chart $\mathcal K_1$ and $\mathcal K_2$ coordinates, $\Sigma_3 \subset \left\{s_1 = \rho_1 \right\}$ and $\Sigma_4 \subset \{s_2 = \beta_2 \}$.

	\subsubsection*{Chart $\mathcal K_1$ dynamics}

	After a suitable desingularisation (division by $s_1$), we obtain the following equations in chart $\mathcal K_1$:
	\begin{align}\label{eq:Ks1_equations}
		\begin{array}{lcl}
			h_1' = \bar{\phi}_1(h_1,\epsilon_1,s_1) - h_1 \bar{\psi}(h_1,\epsilon_1,s_1), \\
			\epsilon_1' = - 3 \epsilon_1\bar{\psi}_1(h_1,\epsilon_1,s_1),         \\
			s_1'= s_1 \bar{\psi}_1(h_1,\epsilon_1,s_1),
		\end{array}
	\end{align}
	where $\bar{\phi}_1(h_1,\epsilon_1,s_1)=s_1^{-1}\phi(s_1h_1,s_1\epsilon_1)$ and $\bar{\psi}_1(h_1,\epsilon_1,s_1)=s_1^{-1}\psi(s_1h_1,s_1,s_1\epsilon_1)$ are well-defined for $s_1=0$ due to \SJ{a} common factor of $s_1$ in the respective numerators. System \eqref{eq:Ks1_equations} has a line of steady states
	\[
	L_{c,1} = \{(0,0,s_1) : s_1\geq0\} ,
	\]
	and we denote the endpoint of $K_{c,1}$ by $p_c = (0,0,0) \in L_{c,1}$. We also identify the following invariant subspaces:
	\begin{enumerate}
		\item[(i)] the plane $\epsilon_1 = 0$;
		\item[(ii)] the plane $r_1 = 0$;
		\item[(iii)] the $h_1-$axis $r_1 = \epsilon_1 = 0, h_1 \geq 0$;
		\item[(iv)] the $\epsilon_1-$axis, which we denote by
		\begin{equation}
			\label{eq:inv_line_K1}
			\gamma_1 = \left\{(0, \epsilon_1, 0) : \epsilon_1 \geq 0 \right\} .
		\end{equation}
	\end{enumerate}
	For the purpose of stating the following result, we also write the rectangle $R_3 \subset \Sigma_3$ in chart $\mathcal K_1$ coordinates:
	\[
	R_3 = \left\{(h,\epsilon_1,\rho_1) : |h| \leq \tilde \alpha, \epsilon_1 \in [0,\tilde\alpha] \right\} .
	\]

	\begin{lemma}\label{lem:extended_manifolds} The following holds for system \eqref{eq:Ks1_equations}:
		\begin{enumerate}
			\item[(i)] The line $L_{c,1}$ is normally hyperbolic and attracting, with eigenvalues \SJ{$\lambda = - \tau_{\rm max}^{-1}, \ 0, \ 0$}.
			%with associated stable and center eigenspaces given by
			%		\begin{equation}
			%		\begin{split}
			%		E^s &= span\{(-1/(\tau_{max} K_\tau^4), 0, s_1 a(s_1))^T\}, \\
			%		E^c &= span\{(0,0,1)^T,(b(s_1),a(s_1),0)^T\},
			%		\end{split}
			%		\end{equation}
			%		respectively, where
			%		\[
			%		a(s_1) = \frac{\kipr p^2 (\gamma  c_t- (1 + \gamma )s_1)}{k_\beta K_p^2
			%			\left(K_c^4+s_1^4\right)}, \qquad b(s_1) = -\frac{V_s}{K_s^2+s_1^2}.
			%		\]
			%		$E^s$ and $E^c$ are linearly independent for $s_1 < \gamma c_t/(1 + \gamma)$.
			%	In particular, this is true at the point $p_c \in L_{c,1}$.
			\item[(ii)] There exists an attracting two-dimensional center manifold $\mathcal{M}_1$ with graph representation
			\begin{equation}
				\label{eq:M1}
				h_1 = K_h^4 s_1 \epsilon_1^2 + O(s_1^2 \epsilon_1^2) ,
			\end{equation}
			containing $L_{c,1}$ and the invariant $\epsilon_1$-axis as restrictions $\mathcal M_1|_{\epsilon_1 = 0}$ and $\mathcal M_1|_{s_1 = 0}$ respectively. The manifold $\mathcal M_1$ can be chosen to be the continuation of the manifold $M_1$ in Lemma \ref{lem:K1_extended_manifold} under the flow, and the variable $\epsilon_1$ is strictly increasing on $\mathcal M_1 \setminus L_{c,1}$.
		\end{enumerate}
	\end{lemma}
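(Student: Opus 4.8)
The plan is to establish both claims of Lemma \ref{lem:extended_manifolds} by a direct linearisation of the desingularised system \eqref{eq:Ks1_equations} along $L_{c,1}$, followed by a center manifold reduction near the endpoint $p_c=(0,0,0)$. The first step is to record the leading-order forms of $\bar\phi_1$ and $\bar\psi_1$. Using $\mathfrak h_\infty$ from \eqref{eq:mathfrak_h} one computes $\hat h_\infty(\epsilon_1)=K_h^4\epsilon_1^2/(1+K_h^4\epsilon_1^2)$, so that from the definition of $\phi$ in \eqref{eq:K1_fns},
\[
\bar\phi_1(h_1,\epsilon_1,s_1)=\tau_{max}^{-1}\bigl(-h_1+K_h^4 s_1\epsilon_1^2\bigr)\bigl(1+s_1^2\epsilon_1^2\bigr).
\]
For $\bar\psi_1$ I would first note that $P_O$ vanishes at $h=0$ (since $\beta=0$ there), hence $\psi(0,r,0)=\hat J_{\text{IPR}}(0,r,0)=0$; this gives $\bar\psi_1(0,0,s_1)\equiv0$ and confirms that $L_{c,1}$ is a line of equilibria. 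Expanding $\psi$ in its second argument about $0$ and reading off the cylinder identity $\psi(h,0,\epsilon_1)=A_{\text{IPR}}\gamma c_t h-A_{\text{SERCA}}\epsilon_1+A_{\text{SERCA}}K\gamma^2c_t^2\epsilon_1^2$ from \eqref{eq:K1_inv2} then yields
\[
\bar\psi_1(h_1,\epsilon_1,s_1)=A_{\text{IPR}}\gamma c_t h_1-A_{\text{SERCA}}\epsilon_1+O(s_1).
\]

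For part (i) I would linearise \eqref{eq:Ks1_equations} along $L_{c,1}$. Because the $\epsilon_1'$ equation carries a factor $\epsilon_1$, its Jacobian row vanishes on $L_{c,1}$; because $\bar\psi_1\equiv0$ on $L_{c,1}$ we also have $\partial_{s_1}\bar\psi_1\equiv0$ there, so the $(3,3)$ entry of the Jacobian vanishes. With the first row equal to $(-\tau_{max}^{-1},0,0)$, the characteristic polynomial factors as $\lambda^2(\lambda+\tau_{max}^{-1})$, giving eigenvalues $-\tau_{max}^{-1},0,0$. The single negative eigenvalue provides the transverse strong stable direction (so $L_{c,1}$ is attracting), while the two zero eigenvalues account for the tangent direction of $L_{c,1}$ and the additional center direction resolved in part (ii).

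For part (ii) the two zero eigenvalues at $p_c$ yield a two-dimensional attracting center manifold $\mathcal M_1$ with a one-dimensional strong stable foliation, by center manifold theory. I would obtain the graph $h_1=H(s_1,\epsilon_1)$ from the invariance equation $\partial_{s_1}H\,s_1'+\partial_{\epsilon_1}H\,\epsilon_1'=\bar\phi_1-H\bar\psi_1$; since the left-hand side and the term $H\bar\psi_1$ are $O(s_1\epsilon_1^3)$, the leading balance reduces to $0=\tau_{max}^{-1}(-H+K_h^4 s_1\epsilon_1^2)$, giving $H=K_h^4 s_1\epsilon_1^2+O(s_1^2\epsilon_1^2)$ as claimed. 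Since $H(s_1,0)=0=H(0,\epsilon_1)$, the restrictions $\mathcal M_1|_{\epsilon_1=0}$ and $\mathcal M_1|_{s_1=0}$ coincide with $L_{c,1}$ and the invariant $\epsilon_1$-axis $\gamma_1$ respectively. Monotonicity follows by evaluating $\epsilon_1'=-3\epsilon_1\bar\psi_1$ on $\mathcal M_1$: there $h_1=H=O(s_1\epsilon_1^2)$, so the IPR contribution to $\bar\psi_1$ is higher order and $\bar\psi_1=-A_{\text{SERCA}}\epsilon_1+\cdots<0$, whence $\epsilon_1'=3A_{\text{SERCA}}\epsilon_1^2+\cdots>0$ for $\epsilon_1>0$.

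Finally I would argue that $\mathcal M_1$ may be taken as the continuation of $M_1$ from Lemma \ref{lem:K1_extended_manifold}: both are attracting center manifolds sharing the orbit $L_{c,1}$, so using this overlap together with the exponential contraction of the strong stable foliation, the forward continuation of $M_1$ under the flow enters the spherical blow-up region and can be chosen to agree with $\mathcal M_1$. I expect the main obstacle to be the non-uniqueness of center manifolds: some care is needed to pin down the distinguished continuation and to verify that the asymptotic expansions above (in particular the sign of $\bar\psi_1$) hold uniformly on the relevant compact $(s_1,\epsilon_1)$-domain rather than only pointwise near $p_c$.
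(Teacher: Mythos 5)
Your argument is correct and follows essentially the same route as the paper's (very terse) proof: linearisation along $L_{c,1}$ for part (i), center manifold theory plus the invariance-equation matching for the graph \eqref{eq:M1}, and restriction of $\epsilon_1'=-3\epsilon_1\bar\psi_1$ to $\mathcal M_1$ for the monotonicity. The uniformity concern you flag at the end is closed by noting that $\psi$ vanishes identically on $\{h=0,\epsilon=0\}$ (since $P_O\propto h$ and the SERCA terms carry explicit factors of $\epsilon$), so that $\bar\psi_1=h_1A(s_1h_1,s_1,s_1\epsilon_1)+\epsilon_1B(s_1,s_1\epsilon_1)$ with $B(0,0)<0$; hence on $\mathcal M_1$, where $h_1=O(s_1\epsilon_1^2)$, one gets $\bar\psi_1=\epsilon_1\left(-A_{\text{SERCA}}+O(s_1)\right)<0$ uniformly for $s_1$ small, rather than an indeterminate $O(s_1)$ remainder competing with $-A_{\text{SERCA}}\epsilon_1$.
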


	\begin{proof}
		%The Jacobian restricted to $L_{c,1}$ is
		%	\[
		%	J|_{L_c}=
		%	\begin{pmatrix}
		%	- 1/(\tau_{max} K_\tau^4) & 0 & 0 \\
		%	0  & 0 & 0 \\
		%	a(s_1)  & b(s_1) & 0
		%	\end{pmatrix}.
		%	\]
		Statement (i) follows after linearisation of system \eqref{eq:Ks1_equations}, and existence of an attracting two-dimensional center manifold $\mathcal M$ at $p_c$ follows from center manifold theory. The graph representation \eqref{eq:M1} can be determined by standard matching arguments. Restricting system \eqref{eq:Ks1_equations} to $\mathcal M_1 \setminus L_{c,1}$, we obtain
		\begin{equation}
			\begin{split}
				\epsilon_1' &= 3 A_{\text{SERCA}} \epsilon_1^2 + O(\epsilon_1^3) , \\
				s_1' &= A_{\text{SERCA}} s_1 \epsilon_1 + O(s_1 \epsilon_1^2) .
			\end{split}
		\end{equation}
		and hence $\epsilon_1' > 0$, since $\epsilon_1 > 0$ on $\mathcal M_1 \setminus L_{c,1}$.
	\end{proof}

	\subsubsection*{Chart $\mathcal K_2$ dynamics}

	After a suitable desingularisation (division by $s_2$), the dynamics in chart $\mathcal K_2$ are governed by
	\begin{align}\label{eq:Ks2_equations}
		\begin{array}{lcl}
			h_2' = r_2 \bar{\phi}_2(h_2,r_2,s_2) + 2 h_2 \bar{\psi}_2(h_2,r_2,s_2), \\
			r_2' = 3 r_2 \bar{\psi}_2(h_2,r_2,s_2),  \\
			s_2' = - 2 s_2\bar{\psi}_2(h_2,r_2,s_2),
		\end{array}
	\end{align}
	where \SJ{$\bar{\phi}_2(h_2,r_2,s_2)=s_2^{-1}\phi(s_2h_2,s_2)$ and $\bar{\psi}_2(h_2,r_2,s_2)=s_2^{-1}\psi(s_2h_2,s_2r_2,s_2)$,} which are well-defined for $s_2=0$ due to a common factor of $s_2$ in the respective numerators. System \eqref{eq:Ks2_equations} has an equilibrium $p_s = (0,0,0)$, and the following invariant subspaces:
	\begin{enumerate}
		\item[(i)] the plane $r_2 = 0$;
		\item[(ii)] the plane $s_2 = 0$;
		\item[(iii)] the $h_2-$axis $r_2 = s_2 = 0, h_2 \geq 0$;
		\item[(iv)] the $r_2-$axis, which we denote by
		\begin{equation}
			\label{eq:inv_line_K2}
			\gamma_2 = \left\{(0, r_2, 0) : r_2 \geq 0 \right\} ;
		\end{equation}
		\item[(v)] the $s_2-$axis $h_2 = r_2 = 0, s_2 \geq 0$.
	\end{enumerate}
	We obtain the following result.
	
	\begin{center}
		\captionsetup{format=plain}
		\begin{figure}[t!]
			\hspace{0em}\centerline{\includegraphics[scale=0.55]{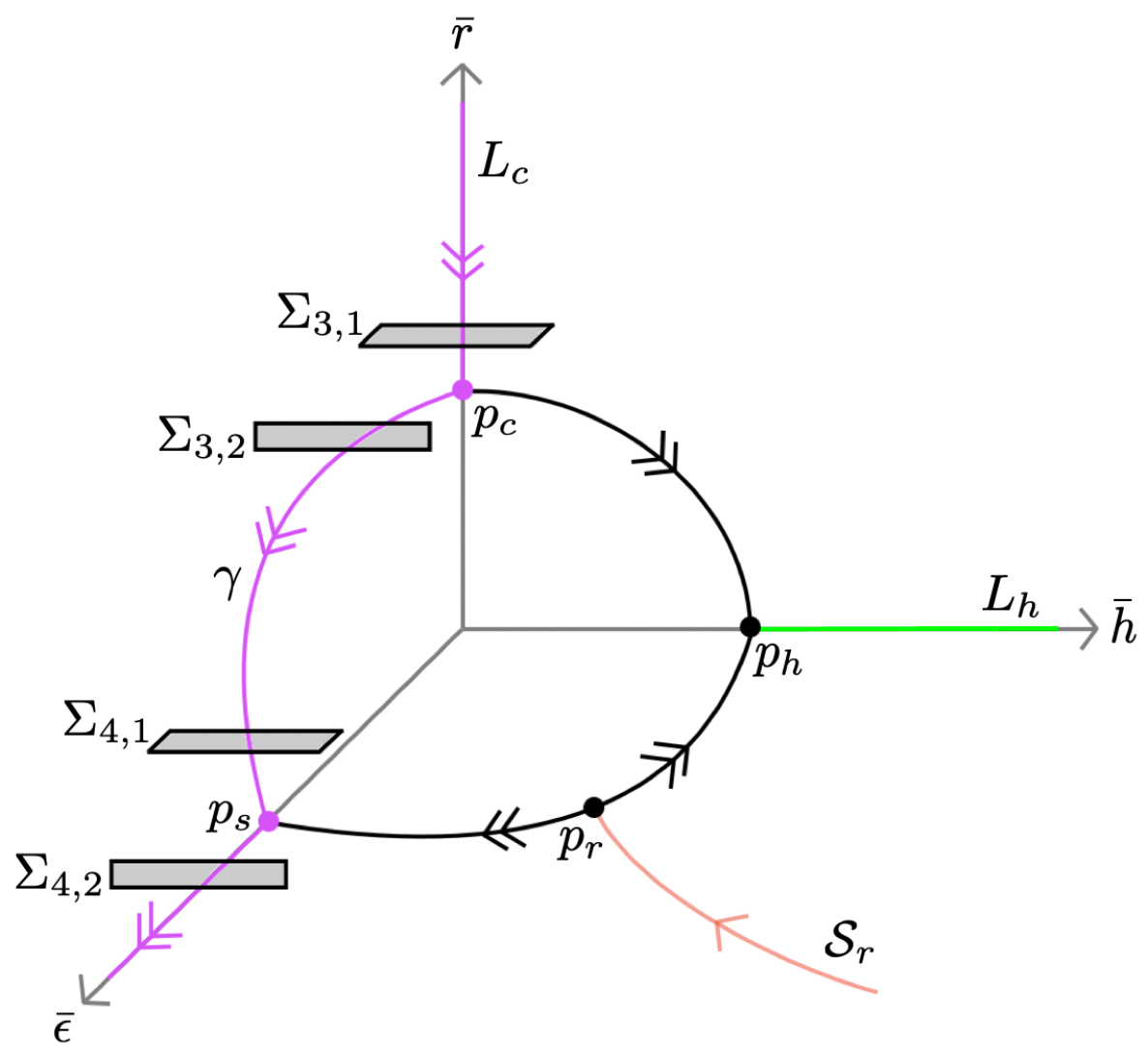}}
			\caption{Dynamics following spherical blow-up of $Q_3$. The relevant part of the singular relaxation cycle is shown in purple. Also shown are the transversal segments $\Sigma_{3,i}$ and $\Sigma_{4,i}$, $i = 1, 2$ used in the construction of the map $\pi_3: \Sigma_3 \to \Sigma_3$.}\label{fig:sphere_bu}
		\end{figure}
	\end{center}
	
	\begin{lemma}\label{lem:sphK1_dynamics} The following hold for system \eqref{eq:Ks2_equations}:
		\begin{enumerate}
			\item[(i)] The equilibrium $p_s$ is hyperbolic with eigenvalues \SJ{given by $\lambda = - 2 A_{\text{SERCA}}$, $- 3 A_{\text{SERCA}}$, $2 A_{\text{SERCA}}$, with} corresponding eigenvectors $(1,0,0)^T$, $(0,1,0)^T$, and $(0,0,1)^T$. % respectively. 
			The unstable manifold $W^u(p_s)$ lies within the $s_2-$axis $h_2 = r_2 = 0, s_2 \geq 0$.
			\item[(ii)] The invariant line $\gamma_1$ in \eqref{eq:inv_line_K1} coincides with the invariant line $\gamma_2$ in \eqref{eq:inv_line_K2} where domains overlap.
		\end{enumerate}
	\end{lemma}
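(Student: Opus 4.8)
The plan for part (i) is a direct linearisation of system \eqref{eq:Ks2_equations} at $p_s = (0,0,0)$, where the only real content is the identification of the leading-order value of $\bar\psi_2$ at this point. First I would record the two relevant constants. Since the chart $\mathcal K_2$ coordinates satisfy $h = s_2 h_2$, $r = s_2 r_2$, $\epsilon = s_2$, the desingularised coefficients are $\bar\phi_2(h_2,r_2,s_2) = s_2^{-1}\phi(s_2 h_2, s_2)$ and $\bar\psi_2(h_2,r_2,s_2) = s_2^{-1}\psi(s_2 h_2, s_2 r_2, s_2)$, with $\phi,\psi$ as in \eqref{eq:K1_fns}. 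Because $\phi(0,0)=0$ and $\psi(0,0,0)=0$, passing to the limit $s_2\to0$ gives $\bar\phi_2(0,0,0) = \partial_{\epsilon_1}\phi(0,0)$ and $\bar\psi_2(0,0,0) = \partial_{\epsilon_1}\psi(0,0,0)$. I would then read these partials off structure already available in chart $K_1$: since $\hat h_\infty(\epsilon_1) = K_h^4 \epsilon_1^2 + O(\epsilon_1^4)$ we get $\partial_{\epsilon_1}\phi(0,0)=\tau_{max}^{-1}\hat h_\infty'(0)=0$, while the restricted cylinder flow \eqref{eq:K1_inv2} shows $\psi(h,0,\epsilon_1) = A_{\text{IPR}}\gamma c_t\,(h - \Psi(\epsilon_1))$ with $\Psi(\epsilon_1) = \tfrac{A_{\text{SERCA}}}{A_{\text{IPR}}\gamma c_t}\epsilon_1(1 - K\gamma^2 c_t^2 \epsilon_1)$, whence $\partial_{\epsilon_1}\psi(0,0,0) = -A_{\text{SERCA}}$. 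Thus $\bar\phi_2(0,0,0)=0$ and $\bar\psi_2(0,0,0)=-A_{\text{SERCA}}$.

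With these values in hand the linearisation is immediate. Every off-diagonal contribution to the Jacobian of \eqref{eq:Ks2_equations} at $p_s$ carries an explicit factor of $r_2$, of $s_2$, or the vanishing constant $\bar\phi_2(0,0,0)$, so differentiating term by term produces the diagonal matrix $\diag\big(2\bar\psi_2(0,0,0),\,3\bar\psi_2(0,0,0),\,-2\bar\psi_2(0,0,0)\big) = \diag(-2A_{\text{SERCA}},\,-3A_{\text{SERCA}},\,2A_{\text{SERCA}})$, whose eigenvectors are exactly the standard basis vectors $(1,0,0)^T,(0,1,0)^T,(0,0,1)^T$ in the order $(h_2,r_2,s_2)$. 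Since $A_{\text{SERCA}}>0$ the equilibrium is hyperbolic with a single unstable direction along $(0,0,1)^T$; and because the $s_2$-axis $\{h_2=r_2=0\}$ is one of the invariant subspaces listed above, the local unstable manifold $W^u(p_s)$ must coincide with it, as claimed.

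For part (ii) the plan is to push the invariant line $\gamma_1$ in \eqref{eq:inv_line_K1} through the transition map $\kappa_{21}$ of \eqref{eq:sphere_trans_maps}. Writing $\gamma_1$ in chart $\mathcal K_1$ coordinates as $h_1 = 0$, $s_1 = 0$ with $\epsilon_1$ free, substitution into $\kappa_{21}$ yields $h_2 = \epsilon_1^{-1}h_1 = 0$, $s_2 = s_1\epsilon_1 = 0$ and $r_2 = \epsilon_1^{-1}$; hence for $\epsilon_1>0$ the image is precisely $\{(h_2,r_2,s_2) = (0,r_2,0): r_2>0\}$, which is $\gamma_2$ from \eqref{eq:inv_line_K2} restricted to the chart overlap. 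Equivalently, both $\gamma_1$ and $\gamma_2$ are the $\{\bar h = 0\}$ quarter-arc of the blow-up sphere $\{s=0\}$ lying in the $\{\bar r,\bar\epsilon\geq0\}$ quadrant, merely described in two different charts; since $\kappa_{21}$ is a diffeomorphism on the overlap, invariance transfers and the two lines coincide there.

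The main obstacle is not conceptual but bookkeeping: justifying $\partial_{\epsilon_1}\psi(0,0,0) = -A_{\text{SERCA}}$ straight from the raw definitions of $\hat J_{\text{IPR}}$ and $\hat J^\pm_{\text{SERCA}}$ in \eqref{eq:K1_fns} is unpleasant. I expect the cleanest route is the one used above, namely to extract this coefficient from the already-derived restricted flow \eqref{eq:K1_inv2} on the cylinder rather than re-expanding the IPR and SERCA fluxes from scratch; everything else is a routine linearisation and a single transition-map substitution.
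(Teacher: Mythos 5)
Your proposal is correct and follows essentially the same route as the paper: part (i) by direct linearisation of \eqref{eq:Ks2_equations} at $p_s$ (the paper simply states this, while you supply the values $\bar\phi_2(0,0,0)=0$ and $\bar\psi_2(0,0,0)=-A_{\text{SERCA}}$ explicitly, consistently with \eqref{eq:K1_inv2}), and part (ii) by the chart transition map. The only cosmetic difference is that the paper identifies $W^u(p_s)$ with the $s_2$-axis by writing out the restricted equation $s_2'=-2s_2\bar\psi_2(0,0,s_2)\geq 0$ on that axis, whereas you invoke invariance of the axis together with the one-dimensionality and uniqueness of the local unstable manifold; both arguments are valid.
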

	
	\begin{proof}
		The first part of statement (i) follows after linearisation of system \eqref{eq:Ks2_equations}. The fact that $W^u(p_s)$ lies within the $s_2-$axis follows from the form of the equations restricted to $h_2 = r_2 = 0$, namely
		\[
		s_2' = - 2 s_2 \bar \psi_2(0, 0, s_2) = \frac{2 V_s s_2}{K_s^2 + s_2^2} + O(s_2^2) \geq 0.
		\]
		Statement 2 follows by an application of the transition map $\kappa_{12}$ in \eqref{eq:sphere_trans_maps}.
	\end{proof}
	
	In particular, Lemma \ref{lem:sphK1_dynamics} implies the existence of a heteroclinic orbit $\gamma$ on the blown-up locus connecting $p_c$ and $p_s$. The resulting (global) singular limit analysis is shown in Figure \ref{fig:sphere_bu}.
	
	\begin{rem}
		Figure \ref{fig:sphere_bu} shows objects that are not explicitly identified in our analysis, since they do not play an important role in the relevant dynamics. We simply note here that one can prove the existence of an additional (partially hyperbolic) equilibrium $p_r$ corresponding to the endpoint of the (extension of the) repelling critical manifold $\mathcal S_r$ in chart $\mathcal K_2$. By looking in an additional chart $\bar h = 1$ one also identifies the image of the line of saddle-type steady states $l_{h}$, denoted $L_h$ in Figure \ref{fig:sphere_bu}, which terminates at a point $p_h$ on the blow-up sphere.
	\end{rem}

	\subsubsection{The map $\pi_3$}
	
	In order to prove Lemma \ref{lem:map34}, we consider the map $\pi_3 : \Sigma_3 \to \Sigma_4$ as a composition 
	\[
	\pi_3 = \pi_{4,1} \circ \pi_{3,2} \circ \pi_{3,1},
	\]
	where $\pi_{3,1} : \Sigma_{3,1} \to \Sigma_{3,2}$, $\pi_{3,2} : \Sigma_{3,2} \to \Sigma_{4,1}$ and $\pi_{4,1} : \Sigma_{4,1} \to \Sigma_{4,2}$ 
	%\[
	%\pi_{3,1} : \Sigma_{3,1} \to \Sigma_{3,2}, \qquad \pi_{3,2} : \Sigma_{3,2} \to \Sigma_{4,1}, \qquad \pi_{4,1} : \Sigma_{4,1} \to \Sigma_{4,2},
	%\]
	denote transition maps induced by the flow. We consider each map in turn. The arguments presented are similar to those in \cite[Appendix A]{Gucwa2009}.

	\subsubsection*{The map $\pi_{3,1}:\Sigma_{3,1} \to \Sigma_{3,2}$. Extension of $\mathcal M$ onto the blow-up sphere}
	
	%\emph{Extension of $\mathcal M$ onto the blow-up sphere}
	Here we are interested in the dynamics near the point $p_c$. The analysis is carried out in chart $\mathcal K_1$, and we define
	\[
	\Sigma_3 = \Sigma_{3,1}=\left\{(h_1,\epsilon_1,\rho_1) : |h_1| \leq \tilde \alpha_3 ,\epsilon_1 \in \left[0,\tilde \beta_3 \right] \right\},
	\]
	and
	\[
	\Sigma_{3,2}=\left\{(h_1,\rho_1,s_1) : |h_1| \leq \tilde \alpha_3, s_1 \in \left[0, \tilde \beta_3 \right] \right\}, 
	\]
	where $\tilde \alpha_3 := \alpha_2 / \rho_1$ and $\tilde \beta_3 := \beta_3 / \rho_1$. The following result describes the map $\pi_{3,1} : \Sigma_{3,1} \to \Sigma_{3,2}$.
	
	\begin{lemma}\label{lem:map3132} Given $\tilde \alpha_3, \tilde \beta_3 > 0$ sufficiently small, the restricted transition map $\pi_{3,1}|_{R_3} = \pi_{3,1,R_3}$ is well-defined and $C^1$ with the following properties:
		\begin{itemize}
			\item[(i)] The intersection $\mathcal M_1 \cap \Sigma_{3,2}$ is a smooth curve given by the graph
			\[
			h_1 = -K_h^4 \rho_1^2 s_1 + O(\rho_1^2 s_1^2) .
			\]
			\item[(ii)] Restricted to lines $\epsilon_1={\rm const}$ the map $\pi_{3,1,R_3}$ is exponentially contracting with rate $e^{-\tilde a_1/\epsilon_1}$ for a constant $\tilde a_1 > 0$.
		\end{itemize}
	\end{lemma}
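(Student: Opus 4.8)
The plan is to treat $\pi_{3,1}$ as a passage map past the attracting centre manifold $\mathcal M_1$ from Lemma \ref{lem:extended_manifolds}, following the approach used for the analogous transition in \cite[Appendix A]{Gucwa2009}. The organising principle is that the original perturbation parameter is a constant of the motion which, in chart $\mathcal K_1$ coordinates, takes the form $\epsilon = s_1^3 \epsilon_1$; hence each orbit is confined to an invariant leaf $\{s_1^3 \epsilon_1 = \mathrm{const}\}$, and on the entry section $\Sigma_{3,1}$ (where $s_1 = \rho_1$) fixing $\epsilon_1$ is equivalent to fixing $\epsilon$. This identifies the lines $\epsilon_1 = \mathrm{const}$ in the statement with the invariant leaves, and reduces (ii) to a one-parameter family of planar passage estimates.

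Statement (i) is then essentially immediate: since $\Sigma_{3,2}$ is the section $\{\epsilon_1 = \rho_1\}$, the intersection $\mathcal M_1 \cap \Sigma_{3,2}$ is obtained by substituting $\epsilon_1 = \rho_1$ into the graph representation \eqref{eq:M1}, which yields the claimed smooth curve $h_1 = -K_h^4 \rho_1^2 s_1 + O(\rho_1^2 s_1^2)$.

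For (ii), I would first argue well-definedness: orbits entering $\Sigma_{3,1}$ are exponentially attracted towards $\mathcal M_1$ by normal hyperbolicity of $L_{c,1}$ (transverse eigenvalue $-\tau_{max}^{-1}$), and since $\epsilon_1$ is strictly increasing on $\mathcal M_1 \setminus L_{c,1}$ with leading-order flow $\epsilon_1' = 3 A_{\text{SERCA}} \epsilon_1^2 + O(\epsilon_1^3)$, they reach $\{\epsilon_1 = \rho_1\} = \Sigma_{3,2}$ in finite time; $C^1$-smoothness then follows from smooth dependence on initial conditions for each fixed $\epsilon_1 > 0$. The core of the proof is the contraction estimate. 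Integrating the leading-order flow on $\mathcal M_1$ gives a transition time $T(\epsilon_1) = \int_{\epsilon_1}^{\rho_1} \frac{ds}{3 A_{\text{SERCA}} s^2} (1 + o(1)) = \frac{1}{3 A_{\text{SERCA}} \epsilon_1}(1 + o(1))$, so $T = O(1/\epsilon_1)$, with the bulk of the passage spent near $L_{c,1}$ where $\epsilon_1$ is small. Because the contraction rate in the fibre direction transverse to $\mathcal M_1$ is bounded below by a positive constant $c_0$ (equal to $\tau_{max}^{-1}$ near $L_{c,1}$) uniformly along the orbit, the accumulated contraction in the $h_1$-direction on each leaf is $O(e^{-c_0 T}) = O(e^{-\tilde a_1/\epsilon_1})$ with $\tilde a_1 = c_0 / (3 A_{\text{SERCA}}) > 0$, as claimed.

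The main obstacle is making this estimate uniform as $\epsilon_1 \to 0$: the entry value $\epsilon_1$ drives the orbit arbitrarily close to the blow-up locus and the transition time diverges, so I must ensure that the transverse contraction rate does not degrade over the unboundedly long passage, in particular along the part of $\mathcal M_1$ near the hyperbolic endpoint $p_s$ identified in Lemma \ref{lem:sphK1_dynamics}. I would control this by fixing a neighbourhood of $\mathcal M_1$ on which normal hyperbolicity provides an invariant strong-stable foliation with a contraction rate bounded below by $c_0 > 0$, and then combining this fibre contraction with the time estimate $T = O(1/\epsilon_1)$; the fact that all eigenvalues along $\mathcal M_1$ between $L_{c,1}$ and $p_s$ are $O(1)$ guarantees no loss of hyperbolicity, and the same fibration yields the required $C^1$-control of the map.
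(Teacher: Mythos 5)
Your proposal is correct and follows essentially the same route as the paper, whose proof simply asserts that both claims follow from the centre-manifold/stable-foliation results: part (i) by restricting the graph \eqref{eq:M1} to $\{\epsilon_1=\rho_1\}$, and part (ii) by combining the $O(1)$ transverse contraction rate along $L_{c,1}$ with the $O(1/\epsilon_1)$ transit time coming from $\epsilon_1'= 3A_{\text{SERCA}}\epsilon_1^2+O(\epsilon_1^3)$, exactly as you do. Two cosmetic quibbles: substituting $\epsilon_1=\rho_1$ into \eqref{eq:M1} actually gives $+K_h^4\rho_1^2 s_1$ rather than the $-K_h^4\rho_1^2 s_1$ stated in the lemma (a sign inconsistency internal to the paper that your write-up silently inherits), and your concern about uniformity of the contraction near $p_s$ is moot for this particular map, since $\pi_{3,1}$ terminates at $\epsilon_1=\rho_1$ in a compact region of chart $\mathcal K_1$ bounded away from $p_s$, which only becomes relevant for the subsequent maps $\pi_{3,2}$ and $\pi_{4,1}$.
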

	
	\begin{proof}
		Statements (i) and (ii) follow immediately from Lemma \ref{lem:K1_extended_manifold}.
	\end{proof}

	\subsubsection*{The map $\pi_{3,2}: \Sigma_{3,2} \to \Sigma_{4,1}$. Tracking $\mathcal M$ over the blow-up sphere}
	
	%\emph{Tracking $\mathcal M$ over the blow-up sphere.}
	We are interested here in the extension of the manifold $\mathcal M$ in chart $\mathcal K_1$. We define
	\[
	\Sigma_{4,1} = \left\{\left(h_1, \frac{1}{\beta_2}, s_1 \right) : |h_1| \leq \alpha_2, s_1 \in [0, \rho_3] \right\}.
	\]
	The dynamics are summarised in the following result.
	
	\begin{lemma}\label{lem:map3241} Given sufficiently small $\rho_1, \alpha_2, \beta_2 > 0$, the map $\pi_{3,2} : \Sigma_{3,2} \to \Sigma_{4,1}$ is a  well-defined diffeomorphism. The intersection of the extension of $\mathcal M_1$ under the flow with $\Sigma_{4,1}$ is a smooth curve with tangent vector
		\begin{equation}
			\label{eq:tangent_vector}
			t_Q \approx \left(- K_h^4 \rho_1^2 \left( \frac{1 - 3A_{\rm{SERCA}} \rho_1^2}{\rho_1 \beta_2} \right)^{1/3} , 0 , \left(\frac{A_{\rm{SERCA}} + \rho_1 \beta_2 - 1}{A_{\rm{SERCA}} (1 - 3 \rho_1)^2)}\right)^{1/3}\right)^T
		\end{equation}
		at $Q$.
	\end{lemma}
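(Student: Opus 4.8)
The plan is to work throughout in chart $\mathcal{K}_1$ with the desingularised system \eqref{eq:Ks1_equations}, and to exploit the fact that the transition from $\Sigma_{3,2}$ to $\Sigma_{4,1}$ takes place along the invariant $\epsilon_1$-axis $\gamma_1$ defined in \eqref{eq:inv_line_K1}, which by Lemma \ref{lem:extended_manifolds}(ii) is contained in the attracting centre manifold $\mathcal{M}_1$ as its restriction $\mathcal{M}_1|_{s_1=0}$, and which (together with $\gamma_2$) forms the heteroclinic $\gamma$ joining $p_c$ to $p_s$ identified via Lemma \ref{lem:sphK1_dynamics}. Since $\mathcal{M}_1$ is flow-invariant, the image of the curve $\mathcal{M}_1 \cap \Sigma_{3,2}$ under the flow is exactly $\mathcal{M}_1 \cap \Sigma_{4,1}$, so the second assertion reduces to pushing the tangent of $\mathcal{M}_1 \cap \Sigma_{3,2}$ (known from Lemma \ref{lem:map3132}) forward along $\gamma_1$.

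First I would establish that $\pi_{3,2}$ is a well-defined diffeomorphism. Both $\Sigma_{3,2}$ and $\Sigma_{4,1}$ are level sets of $\epsilon_1$, at $\epsilon_1 = \rho_1$ and $\epsilon_1 = 1/\beta_2$ respectively, and on $\mathcal{M}_1 \setminus L_{c,1}$ the restricted dynamics computed in Lemma \ref{lem:extended_manifolds} read $\epsilon_1' = 3A_{\text{SERCA}}\epsilon_1^2 + O(\epsilon_1^3) > 0$. Hence $\epsilon_1$ increases monotonically along $\gamma_1$ and, provided $\rho_1\beta_2 < 1$, the orbit reaches $\Sigma_{4,1}$ in finite time (the integrand $1/\epsilon_1'$ is bounded on the compact interval $[\rho_1, 1/\beta_2]$, which is bounded away from the degenerate endpoint $\epsilon_1 = 0$). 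Transversality to both sections follows from $\epsilon_1' \neq 0$ there. Since the flow is regular along $\gamma_1$ (the only equilibria on the $\epsilon_1$-axis are $p_c$ at $\epsilon_1 = 0$ and $p_s$ at $\epsilon_1 = \infty$), the flow-box theorem together with smooth dependence on initial conditions shows that $\pi_{3,2}$ is a well-defined diffeomorphism for $\rho_1, \alpha_2, \beta_2 > 0$ sufficiently small.

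For the tangent vector I would parametrise $\mathcal{M}_1 \cap \Sigma_{3,2}$ by $s_1^{in}$ using the graph $h_1 = -K_h^4\rho_1^2 s_1 + O(\rho_1^2 s_1^2)$ from Lemma \ref{lem:map3132}(i), so that the tangent at its base point is $v_{in} = (-K_h^4\rho_1^2, 0, 1)^T$, and then integrate the variational equation of \eqref{eq:Ks1_equations} along $\gamma_1$. The essential feature is the $1{:}3$ ratio between the $s_1$- and $\epsilon_1$-rates on $\mathcal{M}_1$: from $s_1' = A_{\text{SERCA}}s_1\epsilon_1 + O(s_1\epsilon_1^2)$ and $\epsilon_1' = 3A_{\text{SERCA}}\epsilon_1^2 + O(\epsilon_1^3)$ one has $\mathrm{d}s_1/s_1 = \tfrac{1}{3}\,\mathrm{d}\epsilon_1/\epsilon_1$ to leading order, giving the scaling $s_1 \propto \epsilon_1^{1/3}$ and hence the cube-root factors appearing in \eqref{eq:tangent_vector}. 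Pushing $v_{in}$ forward under this flow, restricting the result to $T_Q\Sigma_{4,1}$ (which forces the middle, $\epsilon_1$-, component to vanish), and retaining the $O(\epsilon_1^3)$ and $O(s_1\epsilon_1^2)$ corrections in the rates when integrating between $\epsilon_1 = \rho_1$ and $\epsilon_1 = 1/\beta_2$ produces the precise components of $t_Q$; smoothness of the image curve is inherited from smoothness of $\mathcal{M}_1$ and of the flow.

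I expect the main obstacle to be the tangent computation rather than the diffeomorphism property: the graph representation \eqref{eq:M1} for $\mathcal{M}_1$ is only valid near the degenerate endpoint $p_c$, so the tangent must be transported globally over the blow-up sphere, all the way to $\epsilon_1 = 1/\beta_2 = O(1)$, where that local description fails. Carrying the variational equation through the non-hyperbolic (purely quadratic) dynamics at $p_c$ and keeping enough higher-order terms to pin down the exact coefficients in \eqref{eq:tangent_vector} --- rather than merely the leading cube-root scaling --- is the delicate part, and is precisely the step that parallels the corresponding computation in \cite[Appendix A]{Gucwa2009}.
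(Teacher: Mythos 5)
Your proposal is correct and follows essentially the same route as the paper: well-definedness of $\pi_{3,2}$ comes from regularity of the flow along the segment of $\gamma_1$ from $P=(0,\rho_1,0)$ to $Q=(0,1/\beta_2,0)$, and the tangent vector $t_Q$ is obtained by integrating the variational equations of \eqref{eq:Ks1_equations} along $\gamma_1$ with initial tangent $(-K_h^4\rho_1^2,0,1)^T$ supplied by Lemma \ref{lem:map3132}, using the explicit solution of $\epsilon_1'=3A_{\text{SERCA}}\epsilon_1^2$ to determine the transition time. Your remark that the $\epsilon_1$-component must be projected out at $Q$ (since $(0,1,0)^T$ is also tangent to $\mathcal M_1$ there) is the correct reading of the final step, which the paper leaves implicit.
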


	\begin{proof}
		The intersection $\gamma_1 \cap \Sigma_{3,2}$ occurs at $P = (0, \rho_1, 0)$, and the intersection $\gamma_1 \cap \Sigma_{4,1}$ occurs at $Q = (0, 1 / \beta_2 ,0)$. Since the flow from $P$ to $Q$ along $\gamma_1$ is regular, all solutions with initial conditions in $\Sigma_{3,2}$ reach $\Sigma_{4,1}$ in finite time if $\rho_1, \alpha_2, \beta_2 > 0$ are sufficiently small. It follows that $\pi_{3,2}$ is a well-defined diffeomorphism.
		
		In order to understand the continuation of the manifold $\mathcal M_1$, we compute the evolution of its tangent space along the line $\gamma_1$. Note that in the parameterisation given in \eqref{eq:inv_line_K1}, $\epsilon = \rho_1$ corresponds to the point $P\in\Sigma_{3,2}$, and $\epsilon_1 = 1 / \beta_2$ corresponds to the point $Q \in \Sigma_{4,1}$. The variational equations along $\gamma_1$ are
		\begin{equation}\label{eq:variational_equations_a}
			\begin{pmatrix}
				\delta h_1'         \\
				\delta \epsilon_1 ' \\
				\delta s_1'
			\end{pmatrix}
			=
			\left(
			\begin{array}{ccc}
				A_{\text{SERCA}} \epsilon_1 - \tau_{\rm max}^{-1} & 0 & \tau_{\rm max}^{-1} K_h^4 \epsilon_1^2 \\
				- 3 A_{\text{\text{IPR}}} \gamma c_t \epsilon_1 & 6 A_{\text{SERCA}} \epsilon_1 & - 3
				c_t^2 K A_{\text{SERCA}} \gamma ^2 \epsilon_1^3 \\
				0 & 0 & - A_{\text{SERCA}} \epsilon_1 \\
			\end{array}
			\right)
			\begin{pmatrix}
				\delta h_1        \\
				\delta \epsilon_1 \\
				\delta s_1
			\end{pmatrix},
		\end{equation}
		coupled to the equation
		\begin{equation}\label{eq:variational_equations_b}
			\epsilon_1' = 3 A_{\text{SERCA}} \epsilon_1^2.
		\end{equation}
		Invariance of the $\epsilon_1-$axis guarantees that the vector $(0,1,0)^T$ is tangent to $\mathcal M_1$ at both $P$ and $Q$. By Lemma \ref{lem:map3132}, a second (linearly independent) tangent vector at $P$ is given by $(-K_h^4 \rho_1^2 , 0 , 1)^T$. This gives an initial value problem for the variational equations \eqref{eq:variational_equations_a} coupled to \eqref{eq:variational_equations_b} with
		\[
		\delta h_1(\rho_1) = -K_h^4 \rho_1^2 , \qquad \delta \epsilon_1(\rho_1) = 0, \qquad \delta s_1(\rho_1) = 1.
		\]
		Integrating equation \eqref{eq:variational_equations_b}, we obtain
		\[
		\epsilon_1(t_1) = \frac{\rho_1}{1 - 3 \rho_1 A_{\text{SERCA}} t_1}, 
		\]
		from which we obtain the following expression for the time $T$ taken for solutions to reach point $Q$:
		\[
		T = \frac{1 - \rho_1 \beta_2}{3 \rho_1 A_{\text{SERCA}}} .
		\]
		Plugging the expression for $\epsilon_1(t_1)$ into \eqref{eq:variational_equations_a}, solving the initial value problem and evaluating it at $T$ yields the desired result.
	\end{proof}

	\subsubsection*{The map $\pi_{4,1} : \Sigma_{4,1} \to \Sigma_{4,2}$. Hyperbolic transition near $p_s$}
	
	%\emph{Resonance. Explicit in $\mathcal K_2$.}
	We consider the dynamics near the hyperbolic equilibrium $p_s$ in chart $\mathcal K_2$, for which
	\[
	\Sigma_{4,1} = \left\{(h_2, \beta_2, s_2) : |h_2| \leq \beta_2 \alpha_2, s_2 \in \left[0, \tilde \rho_4  \right] \right\} ,
	\]
	and
	\[
	\Sigma_4 = \Sigma_{4,2} = \left\{(h_2, r_2, \beta_2) : |h_2| \leq \tilde \alpha_4, r_2 \in \left[0, \tilde \rho_4 \right] \right\}, 
	\]
	where $\tilde \rho_4: = \rho_3 / \beta_2$ and $\tilde \alpha_4 := \alpha_2 /\beta_2$. Noting that %near $p_s = (0,0,0)$ we have
	%\bar \psi_2(h_2,r_2,s_2) \sim \bar 
	\SJ{$\psi_2(0,0,0) = - A_{\text{SERCA}} < 0$,} we may consider the system obtained from \eqref{eq:Ks2_equations} after dividing the right hand side by a locally positive factor of $- \bar \psi_2(h_2,r_2,s_2)$:
	\begin{align}\label{eq:Ks2_desingularised_equations}
		\begin{array}{lcl}
			h_2' = - 2h_2 - r_2 \left(\frac{\bar{\phi}_2(h_2,r_2,s_2)}{\bar\psi_2(h_2,r_2,s_2)}\right), \\
			r_2' = - 3 r_2,                                                                             \\
			s_2' = 2 s_2.
		\end{array}
	\end{align}
	Like \eqref{eq:Ks2_equations}, system \eqref{eq:Ks2_desingularised_equations} has a hyperbolic saddle at $p_s = (0,0,0)$.
	%Recall that resonace is an issue only in the invariant plane $\{ r_2 = 0 \}$. \textcolor{red}{\textbf{[I have commented the rest of this section out, since previous calculations dealt with different blow-up weights in which we had a genuine resonance. This is no longer an issue]}}.

	\begin{lemma}\label{lem:map4142}
		Given $\alpha_2, \beta_2, \tilde \alpha_4, \tilde \rho_4 > 0$ sufficiently small, the transition map $\pi_{4,1}$ is $C^1-$smooth with form
		\begin{equation}
			\label{eq:map41}
			\pi_{4,1} : (h_2, \beta_2, s_2) \mapsto 
			\left( O\left(\frac{h_2 s_2}{\beta_2} \right) , \beta_2 \left(\frac{s_2}{\beta_2} \right)^2, \beta_2 \right) .
		\end{equation}
		%	where
		%	\[
		%	\tilde \pi_{5,3}(h_{in},s_{in})=\left(\frac{s_{in}}{\delta}\right)^2h_{in}+...
		%	\]
	\end{lemma}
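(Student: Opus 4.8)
The plan is to treat $\pi_{4,1}$ as a transition map past the hyperbolic saddle $p_s$, exploiting the fact that in the desingularised system \eqref{eq:Ks2_desingularised_equations} the equations for $r_2$ and $s_2$ are \emph{linear and decoupled} from each other and from $h_2$. By Lemma \ref{lem:sphK1_dynamics}(i) the point $p_s$ is a saddle with rates $-2$, $-3$, $+2$ in the $h_2$-, $r_2$-, $s_2$-directions respectively, and $\bar\psi_2(0,0,0) = -A_{\text{SERCA}} \neq 0$, so the passage near $p_s$ is amenable to the standard hyperbolic estimates used in \cite[Appendix A]{Gucwa2009}.

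First I would integrate the transverse equations explicitly. A solution entering $\Sigma_{4,1}$ at $(h_2,\beta_2,s_2)$ satisfies $r_2(t) = \beta_2 e^{-3t}$ and $s_2(t) = s_2 e^{2t}$, so the defining exit condition $s_2(T) = \beta_2$ for $\Sigma_{4,2}$ fixes the transition time
\[
T = \tfrac12 \ln\!\left(\frac{\beta_2}{s_2}\right),
\]
which is finite for $s_2 > 0$ (and $\to +\infty$ as $s_2 \to 0^+$); this already establishes that $\pi_{4,1}$ is well-defined for $s_2$ sufficiently small. Substituting $T$ into $r_2(T) = \beta_2 e^{-3T}$ yields the middle component of the map as a power of $s_2/\beta_2$, with exponent equal to the ratio of the magnitudes of the $r_2$- and $s_2$-rates, giving precisely the power-law form recorded in \eqref{eq:map41}.

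Next I would control the $h_2$-component by variation of constants. Writing the $h_2$-equation of \eqref{eq:Ks2_desingularised_equations} as $h_2' = -2h_2 + R$, with $R = -\,r_2\,(\bar\phi_2/\bar\psi_2)$, the solution through the flow is
\[
h_2(T) = h_2\, e^{-2T} + \int_0^T e^{-2(T-\tau)} R\big(h_2(\tau),r_2(\tau),s_2(\tau)\big)\, d\tau .
\]
The homogeneous part contributes the dominant term $h_2 e^{-2T} = h_2\,(s_2/\beta_2)$, since $e^{2T} = \beta_2/s_2$. For the integral one uses that $R$ vanishes at $p_s$: from $\hat h_\infty(0)=0$ one has $\bar\phi_2 = O(h_2,s_2)$, so $R = O\big(r_2(h_2+s_2)\big)$, and inserting the explicit solutions $r_2(\tau),s_2(\tau)$ shows the integral is controlled by the homogeneous factor $e^{-2T}$, yielding the first component of the form in \eqref{eq:map41}. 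Finally, $C^1$-smoothness of $\pi_{4,1}$ follows by differentiating the variation-of-constants formula and the explicit time $T(s_2)$ with respect to the initial data, the transverse flow being exactly linear with only the smooth, bounded factor $\bar\phi_2/\bar\psi_2$ entering nonlinearly.

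The main obstacle I anticipate is the $h_2$-component near this resonant saddle: because the $h_2$-rate $-2$ coincides in magnitude with the unstable $s_2$-rate $+2$, one must verify that the inhomogeneous integral genuinely respects the contraction factor $e^{-2T}$ rather than producing logarithmic or lower-order corrections, and that the resulting map is truly $C^1$ (not merely Lipschitz) through the passage. This is exactly where the vanishing of the coupling term $R$ at $p_s$ — hence the precise structure of $\bar\phi_2$ — must be used, and where the estimates of \cite[Appendix A]{Gucwa2009} should be adapted with care to the eigenvalue configuration at hand.
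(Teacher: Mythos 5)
Your proposal is correct, and it coincides with the paper's proof on the transverse part: the paper likewise integrates the decoupled linear equations $r_2'=-3r_2$, $s_2'=2s_2$ of \eqref{eq:Ks2_desingularised_equations} explicitly, reads off $T=\tfrac12\ln(\beta_2/s_2)$, and thereby obtains the middle component $\beta_2(s_2/\beta_2)^2$. Where you diverge is the $h_2$-component: the paper disposes of it in one line by invoking Belitskii's theorem (a $C^1$-linearisation result), which applies because the spectrum $\{-2,-3,+2\}$ has no resonance of the form $\lambda_i=\lambda_j+\lambda_k$ with $\lambda_j<0<\lambda_k$ (the only such sums are $0$ and $-1$), and in the linearised coordinates $h_2(T)=h_2e^{-2T}=h_2 s_2/\beta_2$. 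Your variation-of-constants argument is a more elementary, self-contained substitute: it makes transparent \emph{why} the apparent resonance $|-2|=|{+}2|$ is harmless, namely that the coupling term $R=-r_2(\bar\phi_2/\bar\psi_2)$ carries the factor $r_2(\tau)=\beta_2e^{-3\tau}$, so the Duhamel integral $\int_0^Te^{-2(T-\tau)}R\,d\tau$ converges with no secular or logarithmic growth; a short Gronwall step closes the estimate on $e^{2\tau}|h_2(\tau)|$. The price is that you must still argue $C^1$-dependence by differentiating the Duhamel formula and $T(s_2)$ with respect to the data (which works, by the same integrability of $e^{-\tau}$), whereas Belitskii delivers $C^1$-smoothness for free. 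One small caveat, which applies equally to the paper's stated form \eqref{eq:map41}: since $\{h_2=0\}$ is not invariant, the inhomogeneous integral contributes an additional term of order $\beta_2^{1/2}s_2^{3/2}$ (using $\bar\phi_2=O(|h_2|+s_2)$, which follows from $\hat h_\infty(\epsilon_1)=O(\epsilon_1^2)$ as you note), so the first component is not literally $O(h_2s_2/\beta_2)$ when $h_2$ is very small; this does not affect any downstream use of the lemma (the contraction, the wedge shape of the image, or the tangency of $\sigma_4$ to $r_1=0$).
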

	
	\begin{proof}
		Consider a solution $(h_2,r_2,s_2)(t)$ for \eqref{eq:Ks2_desingularised_equations} which satisfies
		\[
		(h_2,r_2,s_2)(0) = (h_{1,in}, \beta_2, s_{2,in}) , \qquad 
		(h_2,r_2,s_2)(T) = (h_{2,out}, r_{2,out}, s_{2,in}, \beta_2) .
		\]
		\begin{comment}
		\[
		\begin{cases}
		h_2(0) &= h_{1,in}, \\
		r_2(0) &= \beta_2, \\
		s_2(0) &= s_{2,in},
		\end{cases}
		\qquad
		\begin{cases}
		h_2(T) = h_{2,out}, \\
		r_2(T) = r_{2,out}, \\
		s_2(T) = \beta_2.
		\end{cases}
		\]
		\end{comment}
		Direct integration yields $r_2(t) = \beta_2 e^{- 3 t}$ and $s_2(t) = s_{2,in} e^{2 t}$, which leads to an expression for the transition time
		\[
		T = \frac{1}{2} \ln \left(\frac{\beta_2}{s_{2,in}}\right) ,
		\]
		proving that the transition map $\pi_{4,1}$ is of the form
		\[
		\pi_{4,1}: (h_{2,in}, \beta_2, s_{2,in} ) \mapsto \left(h_{2,out}, \beta_2 \left(\frac{s_{2,in}}{\beta^2} \right)^2, \beta_2 \right).
		\]
		The estimate for $h_{2,out}$ in \eqref{eq:map41} follows by an application of Belitskii's theorem \cite{Belitskii1973}, see also \cite[Theorem 3.1]{Homburg2010}, which guarantees a $C^1$ transition of the desired form.
		%, it suffices to notice that the right-hand-side of the equation for $h_2'$ in \eqref{eq:Ks2_desingularised_equations} is $C^1$ so that the order does not change with integration. 
	\end{proof}

	\subsubsection{Proof of Lemma \ref{lem:map34}}
	
	By Lemmas \ref{lem:map3132}, \ref{lem:map3241}, \ref{lem:map4142}, the restricted map $\pi_{3,R_3}$ is $C^1$ since it is a restriction of a composition of the $C^1$ maps $\pi_{3,i}$ and $\pi_{4,i}$, $i=1,2$. Statements (ii) and (iii) in Lemma \ref{lem:map34} follow from Lemmas \ref{lem:map3132}, \ref{lem:map3241}, \ref{lem:map4142}, with strong contraction due to the map $\pi_{3,1}$.
	
	Smoothness properties of the curve $\sigma_4$ in statement (i) follow from the fact that $\pi_{3,R_3}$ is $C^1$, and the fact that $\sigma_4$ is tangent to $r_1 = 0$ follows if we consider the expression for the tangent vector $t_Q$ in \eqref{eq:tangent_vector} as a first order approximation of the curve $\mathcal M\cap \Sigma_{4,1}$ and apply the map $\pi_{4,1}$ in Lemma \ref{lem:map4142}.

%	\begin{comment}
	
%	\section{Proof of Theorem \ref{thm:hopf}}
%	\label{app:hopf_proof}
	
%	\com{\textbf{Combine with the next appendix}} 
	
%	\

%	\end{comment}

	\section{Onset of oscillations}
	\label{sec:bifurcations}

	Here, we briefly address the basic mechanisms leading to the onset of oscillations under parameter variation. We focus on three important model parameters: total calcium concentration $c_t$, the IP$_3$ concentration $p$ and the time-scale parameter $\tau_{\rm max}$ of the $h$-dynamics.
	%\WM{Sam: please see my comments in this section. I'd like to focus on complete canard explosion and mention incomplete only in a remark. Then the regular AH-bif. The aim here si to compare frequency/period at onset and how it compares to the relax osc regime. }
	%We discuss \wm{two distinct} mechanisms for the onset of oscillations in system \eqref{eq:general_form} under parameter variation. 
	
	%\WM{[NZ] the next section is completely new}
	
	%\subsection{Canard induced oscillations}
	\subsection{Singular Andronov-Hopf bifurcation and canard explosion under variation of $c_t$ or $p$}
	\label{sub:can_explosion}
	
	%{\color{red}{VK: Some rewording below, to remove repetition}}
	%\wm{
	%A first important observation in system \eqref{eq:general_form} is that the variation of $\tau_{\rm max}$
	%\WM{[Sam] please check captions of Figures 8 and 9.}
	%\WM{update accordingly after Figs 8-9 haver been clarified.}
	The first mechanism involves a singular Andronov-Hopf (AH) bifurcation and a corresponding \textit{canard explosion} \cite{Dumortier1996,Krupa2001b,Kuehn2015} %, i.e.,~solutions which lie within the intersection of the (extended) slow manifolds $S_{a,\delta}$ and $S_{r,\delta}$ of Lemma \ref{lem:K1_dynamics}, VK: sentence didn't make sense to me with this i.e., in it. Presumably this is a definition of what you mean by a canard?
	and may arise in system  \eqref{eq:general_form} under variation in either $p$ or $c_t$ (but not under variation of $\tau_{\rm max}$). 
	%For these cases we simply provide a brief description and refer to the literature, leaving the detailed analysis for future work.
	%The singular nature of the AH bifurcation and the corresponding well-known {\em canard phenomenon} 
	The occurrence of this mechanism is correlated with the passage of the equilibrium $q$ through the fold point $F$ in Figure \ref{fig:manifold_R2} under parameter variation. Note, that this implies a violation of the regularity condition \eqref{eq:fold_regularity_R2}. %Specifically in system \eqref{eq:main_R2}, this can occur under variation of either $p$ or $c_t$ (but not under $\tau_{\rm max}$). VK: removed since it repeats a sentence above.
	Results in \cite{Krupa2001b,Dumortier1996} then imply the existence of a nearby singular AH bifurcation for a locally unique parameter value $p = p_{ah} + O(\delta)$ or $c_t = c_{t,ah} + O(\delta)$. We highlight that periodic orbits arising from the singular AH bifurcation %resulting from variation of  $p$ or $c_t$ 
	are expected to have an oscillation period $\mathcal T = O(\epsilon^{-7/4})$, which is intermediate between the intermediate-slow and infra-slow time-scales $t_1 = \epsilon^{3/2} t$ and $\tau_1 = \epsilon^2t$ respectively.

	\begin{figure}[t!]
		\centering
		\begin{subfigure}[b]{0.48\linewidth}
			\includegraphics[width=\linewidth]{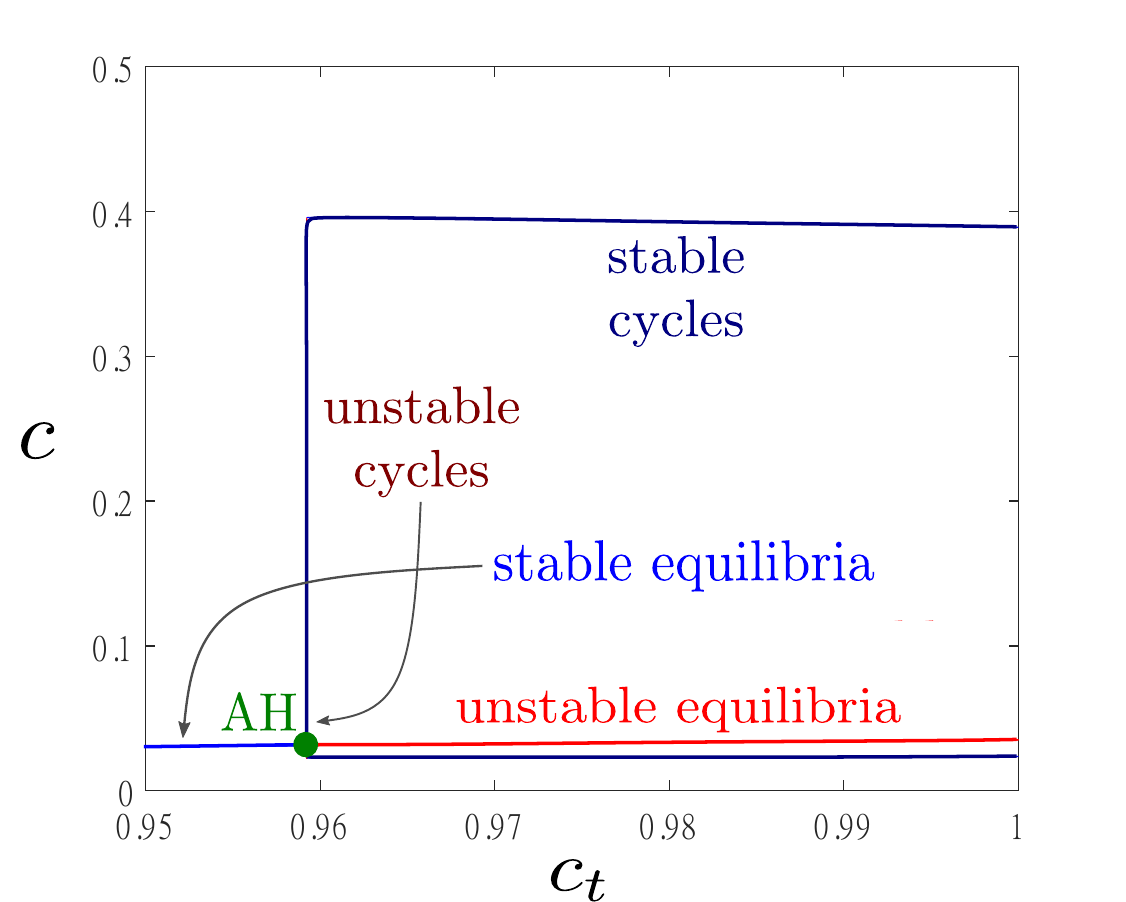}
		\end{subfigure}
		\quad
		\begin{subfigure}[b]{0.485\linewidth}
			\includegraphics[width=\linewidth]{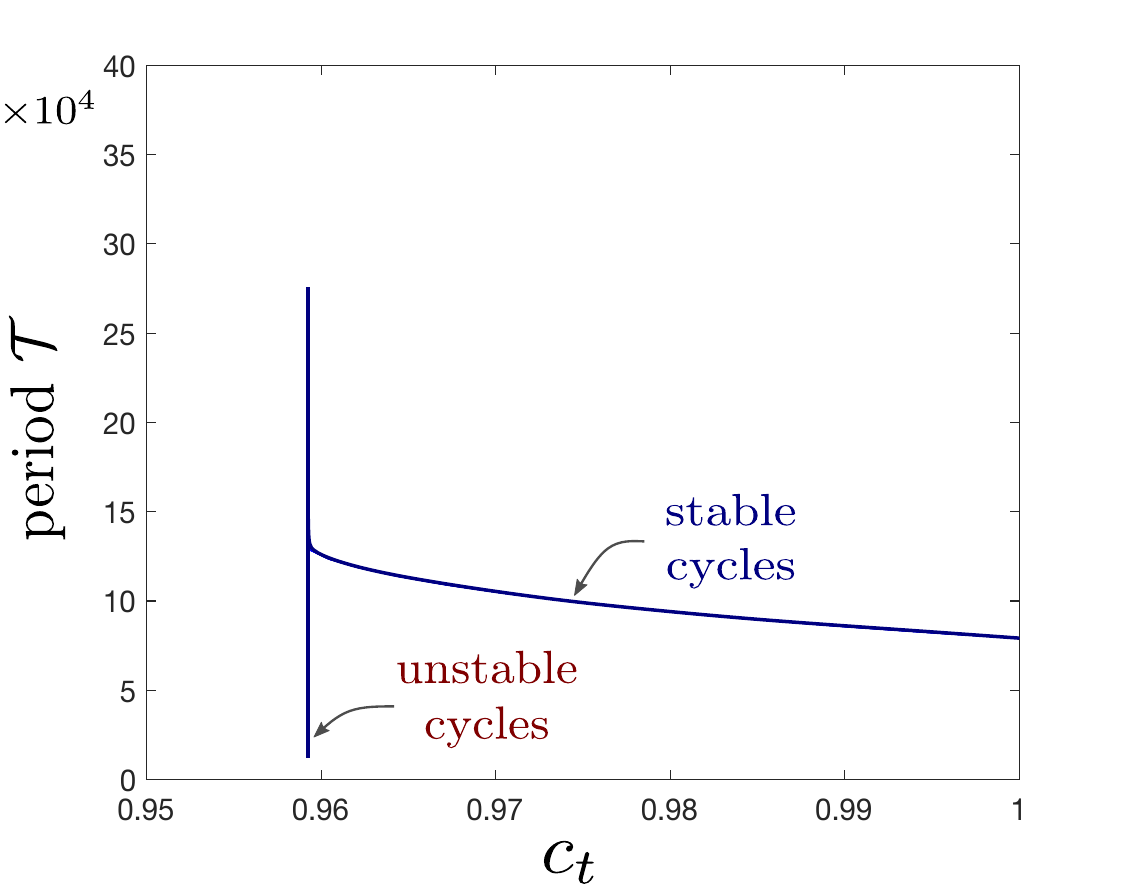}
		\end{subfigure}
		\caption{{Bifurcation diagram for system \eqref{eq:general_form} under variation of $c_t$, computed numerically in MatCont \cite{MATCONT}, for $p=0.015$  and other parameters as in Table \ref{tab:params_post_scaling}: %(Left) stable and unstable/saddle equilibria are represented in black {\color{red}{looks like light blue, not black, to me}} and red respectively. The blue branches show the maximum and minumum values of $c$ for the stable limit cycles of relaxation type. % which by Theorem \ref{thm:main} exists. 
				(Left) The onset of oscillations happens via a subcritical singular Andronov-Hopf (AH) bifurcation at $c_t\approx 0.96$. %which is subcritical (i.e.~first Lyapunov coefficient  is positive). 
				The transition to stable relaxation oscillations happens via a canard explosion in an exponentially small parameter regime.  %similar that described for autocatalytic systems in \cite{gucwa2009geometric}. 
				(Right) Period of the branch of relaxation oscillations. %{\color{red}{If I was being very picky, I would say that the labels in the left panel should say ``equilibria" since we use a plural for the periodic orbits. I'm not sure it really matters though....}}
				%The oscillation period at the onset near the singular AH bifurcation is observed to be $3  \times 10^{-4}$, which matches the theoretical prediction of  $\mathcal T \propto O(\epsilon^{-7/4}) \sim 3.6 \times 10^{-4}$  very well. 
				%The relaxation cycle have an increased period of $O(\tau_{max}\epsilon^{-2})$; see Remark~\ref{rem:num-period}. 
				%[remark: the spike in period probably indicates the position of the SNPO of canard cycles.] VK: removed this since I couldn't see it was well-justified here.
		}}
		\label{fig:can_exp}
	\end{figure}
	
	Figure~\ref{fig:can_exp} shows a bifurcation diagram for system \eqref{eq:general_form} under variation of $c_t$, with $p=0.015$ and other parameters as in Table \ref{tab:params_post_scaling}. As expected, the onset of oscillations happens via a singular AH bifurcation, at $c_t\approx 0.96$. The AH bifurcation is observed to be subcritical; the criticality has been confirmed numerically in MatCont \cite{MATCONT} by showing that the corresponding first Lyapunov coefficient of the AH bifurcation is positive. 
	
	The rapid onset of the (non-standard) relaxation oscillations described in Theorem \ref{thm:main} is expected to occur over an exponentially small interval in $c_t$ (or $p$); this is referred to as a \textit{canard explosion} \cite{Dumortier1996,Krupa2001b,Gucwa2009} because of the dramatic (or explosive) growth of amplitude. The rapid onset of oscillations is observed as an almost vertical segment in Figure~\ref{fig:can_exp}. 
	
	\begin{rem}
		Due to the subcritical nature of the singular AH bifurcation observed in Figure~\ref{fig:can_exp}, there exists a small parameter regime in which there is bistability between a stable relaxation/canard cycle and  a stable equilibrium; on the scale of Figure~\ref{fig:can_exp}, this regime is too small to be seen. The existence of this regime of bistability implies the occurrence of a saddle-node bifurcation of canard cycles (not indicated in Figure~\ref{fig:can_exp}). We refer to, e.g.,~\cite{Krupa2001b,deMaesschalck2015} for details.
		%It is expected to have a similar geometric structure to the one described in \cite{gucwa2009geometric}.} 
		%{\color{red}{We could omit this Remark without losing anything except some length.}}
	\end{rem}
	
	\begin{figure}[t!]
		\centering
		\includegraphics[scale=0.62]{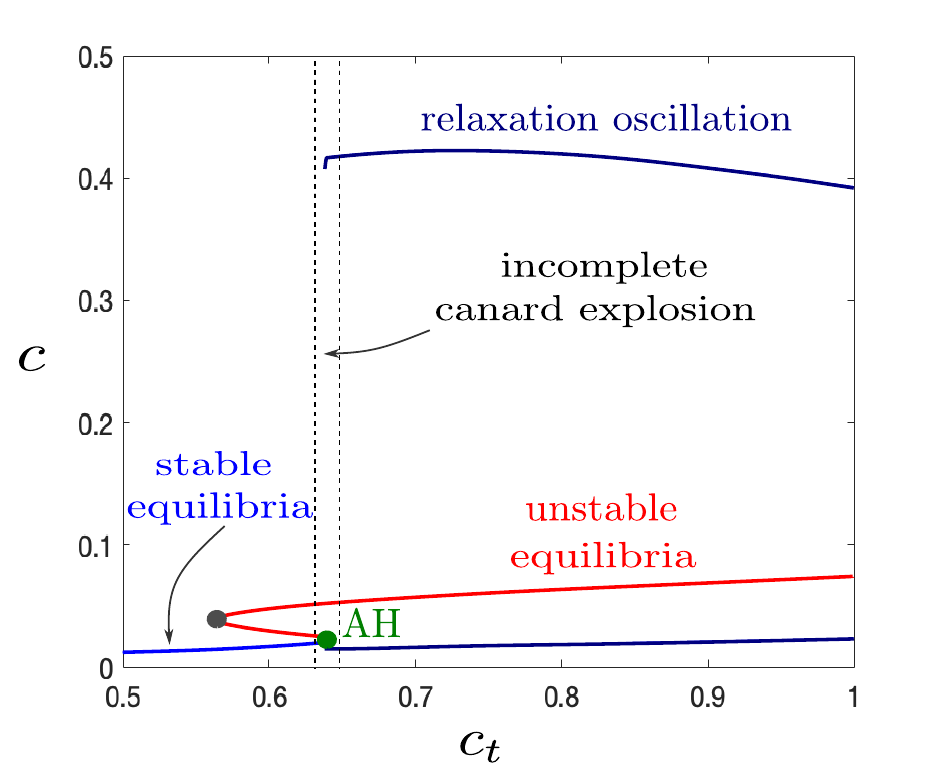}
		\caption{{Bifurcation diagram for system \eqref{eq:general_form}, computed numerically in MatCont \cite{MATCONT} with parameters as in Table \ref{tab:params_post_scaling}; compare with Figure~\ref{fig:can_exp}. %The colors indicate branch types and stabilities as in Fig.~\ref{fig:can_exp}.
				%stable and unstable/saddle equilibria are represented in black and red respectively (the insider branch is saddle-type). The blue branches correspond to max/min values associated with the stable limit cycle of relaxation type, which by Theorem \ref{thm:main} exists. 
			%	The onset of oscillations happens via a subcritical singular AH bifurcation. \rev{We were unable to connect the relaxation oscillations with the Andronov-Hopf bifurcation, where the system becomes extremely stiff. This is expected from theory: t}he transition to stable relaxation oscillations happens via an `incomplete' canard explosion involving \rev{multiple singular bifurcations and} canard homoclinic cycles; see, e.g., \cite{deMaesschalck2015} for details. 
				{\color{black}{%I find this confusing. I realise this has been modified to accommodate comments of Referee 1 but I don't think it works yet. I suggest 
				A subcritical singular AH bifurcation occurs on the middle branch of the S-shaped curve of equilibria. Theoretical considerations \cite{deMaesschalck2015} lead us to expect this AH bifurcation to produce a short unstable branch of periodic orbits but we were unable to find this numerically because of the extreme stiffness of the system. The transition to stable relaxation oscillations is similarly expected to happen via an `incomplete' canard explosion involving multiple singular bifurcations and canard homoclinic cycles; see, e.g., \cite{deMaesschalck2015} for details.}}
				}}
		%[Sam: $p$-value  must be different than in Figure 8, correct? Secondly, there must be another AH-bif sitting on the middle branch between the two SN-bif (the incomplete canard explosion regime), correct? The corresponding branch of `small' periodics terminates in a `small' homoclinic. Btw, I suspect that the `real' canard explosion in Fig 8 happens near the lower AH-bif (based on the observation seen here). Please check.]}
		%Note that the lower branch corresponding to minimal $c-$values, is close to the attracting slow manifold $\mathcal S_{a,\delta}$ in (R2).  
		%	In system \eqref{eq:general_form} we expect an explosion of type (ii) in the $c_t$ region between dashed vertical lines at the left-hand side of the relaxation oscillatory regime, and an explosion of type (i) in the $c_t$ region on the right-hand side of the relaxation oscillatory regime. We were unable to complete the diagram at the left-hand end of the relaxation oscillatory regime, were the system becomes extremely stiff. We note that the transition from stable-unstable transition occurs (on both sides) via subcritical (singular) Andronov-Hopf bifurcation.}}
		\label{fig:can_explosion}
	\end{figure}

	Figure~\ref{fig:can_explosion} shows another bifurcation diagram for system \eqref{eq:general_form} under variation of $c_t$, this time with all parameters as in Table \ref{tab:params_post_scaling}. Note the S-shaped branch of equilibria which indicates that the choice of $p=0.025$ puts the system into the regime in which there can be three equilibria; %the `three equilibria' regime. Recall that the reduced problem \eqref{eq:reduced_R2} can have up to three equilibria; 
	see Figure~\ref{fig:cusp}. There is a subcritical singular AH bifurcation at $c_t\approx 0.63$.  As before, the criticality has been confirmed numerically in MatCont \cite{MATCONT}. %by calculating the corresponding first Lyapunov coefficient of the AH bifurcation which is positive (and, hence, subcritical). 
	Importantly, the singular AH bifurcation resides on the middle branch (close to the lower fold) of the S-shaped equilibrium branch.

	The details of the canard explosion in system \eqref{eq:general_form} can differ quite significantly, depending on the number of equilibria. 
	%depending on the number of real positive roots to the cubic polynomial \eqref{eqs:reduced_equilibria_R2b}. In the simplest case, equation \eqref{eqs:reduced_equilibria_R2b} has only one real and positive root, so that for $\epsilon \ll 1$ system \eqref{eq:main_R2} has a unique equilibrium. In this case, the associated canard explosion is expected to have a similar geometric structure to the one described in \cite{gucwa2009geometric}. In the alternative (generic) scenario, \eqref{eqs:reduced_equilibria_R2b} has three real positive roots, and system \eqref{eq:general_form} has three equilibria.
	% The canard explosion mechanism in system \eqref{eq:main_R2} can differ quite significantly, however, %differs in kind to that associated with autocatalator models described in \cite{gucwa2009geometric}: 
	%since varying $p$ (or $c_t$) also varies the shape of the manifold $\mathcal S$ through parameter dependence of $\zeta(C)$. This can lead to additional (up to three) equilibria on $\mathcal S$, and an
	In the case that there are three equilibria, there is an \textit{incomplete canard explosion} in which canard cycles may terminate prematurely in a homoclinic bifurcation. Figure~\ref{fig:can_explosion} indicates the small zone where such an incomplete canard explosion happens. In this zone, the large relaxation oscillations turn into large canard cycles which terminate in a  large amplitude homoclinic bifurcation. Similarly, the small unstable oscillation cycles born out of the singular AH bifurcation turn into small canard cycles that terminate in a small amplitude homoclinic bifurcation.
	
	%This phenomenon has been described in the context of neural excitation in % in a manner similar to the canard explosion phenomenon described in \cite{Wechselberger2015}. 
	\begin{rem}
		A rigorous treatment of this incomplete canard explosion goes beyond the scope of this article and is deferred for future work. We refer to, e.g., \cite{deMaesschalck2015}, which studies that phenomenon in the context of (neural) excitation.
	\end{rem}
	%\SJJ{, however in Figure \ref{fig:can_explosion} we present numerical evidence for the presence of both explosive mechanisms under $c_t-$variation}.
	%}

	%\WM{So that canard explosion would produce $O(\epsilon^{-4/7})$ period at onset, correct? So that is different to the story in section 6. Should that be discussed with the other AH-bifurcation in section 6?}

	\subsection{\SJ{Regular Andronov-Hopf bifurcation under $\tau_{\rm max}$ variation}}
	
	%\SJJ{A second, alternative mechanism has been studied numerically in \cite{sneyd2017dynamical}, and concerns the variation of $\tau_{max}$ over several orders of magnitude. We show that after a significant (order of magnitude) decrease in $\tau_{max}$, system \eqref{eq:main_R2} becomes a \textit{regular perturbation problem} which undergoes a regular Andronov-Hopf bifurcation in regime (R2).}
	
	%\WM{So, here is my understanding: for $\tilde \tau_{max} = O(\epsilon^{-2})$ there is no AH-bif under $\hat \tau_{max}$ variation. Using the same overall model scaling but $\tilde \tau_{max} = O(\epsilon^{-3/2})$, system (5.1) turns into a standard slow-fast system with $h$ being fast. The branch $h=0$ is normally hyperbolic, but $c=0$ is degenerate. I think that explains the shape in phase space (squeezed in $c$-direction). Could be compared with Figure 7.\\
	%Zooming into $c=O(\epsilon^{1/2})$ gives a regular system described to leading order by (6.3). Here one can identify AH-bif under variation of $\hat \tau_{max}$ as you describe it in Thm 6.2.}
	
	From a physiological point of view, $\tau_{\rm max}$ is an important parameter, as it is one of the major determinants of oscillation period. For example, although we do not have a detailed understanding of oscillation frequency as a function of $\tau_{\rm max}$, we do know that an increase in $\tau_{\rm max}$ leads to a decrease in oscillation frequency \cite{sneyd2017dynamical}. \SJ{Numerical studies of the corresponding open-cell model \eqref{eq:Full_model_open_ct} in \cite{sneyd2017dynamical} showed the {\color{black}{onset of 
relaxation oscillations via a supercritical Hopf bifurcation as $\tau_{\rm max}$ is increased.}}
	%termination of relaxation oscillations %oscillations similar to those described for the closed-cell model in Theorem \ref{thm:main} 
		%in a supercritical AH bifurcation. %which occurs after a significant decrease in $\tau_{max}$; 
		%see \cite[Figure S3]{sneyd2017dynamical}. 
		These findings have been reproduced for system \eqref{eq:general_form} in Figure \ref{fig:period}, which shows the period of oscillations in system \eqref{eq:general_form} as a function of the original dimensionless quantity $\tilde \tau_{\rm max}$ from Section \ref{sub:non_dimensionalisation}. \rev{We revert to the original $\tilde \tau_{\rm max}$ in these figures and the following in order to consider parameter variations over several orders in $\epsilon$.}}
	%In considering variation over several orders of magnitude in Figure \ref{fig:period} and the following, we recall the original dimensionless notation $\tilde \tau_{max}$ from Section \ref{sub:non_dimensionalisation}.}

	\begin{figure}[t!]
		\centering
		%	\begin{subfigure}[b]{0.45\linewidth}
		%		\includegraphics[width=\linewidth]{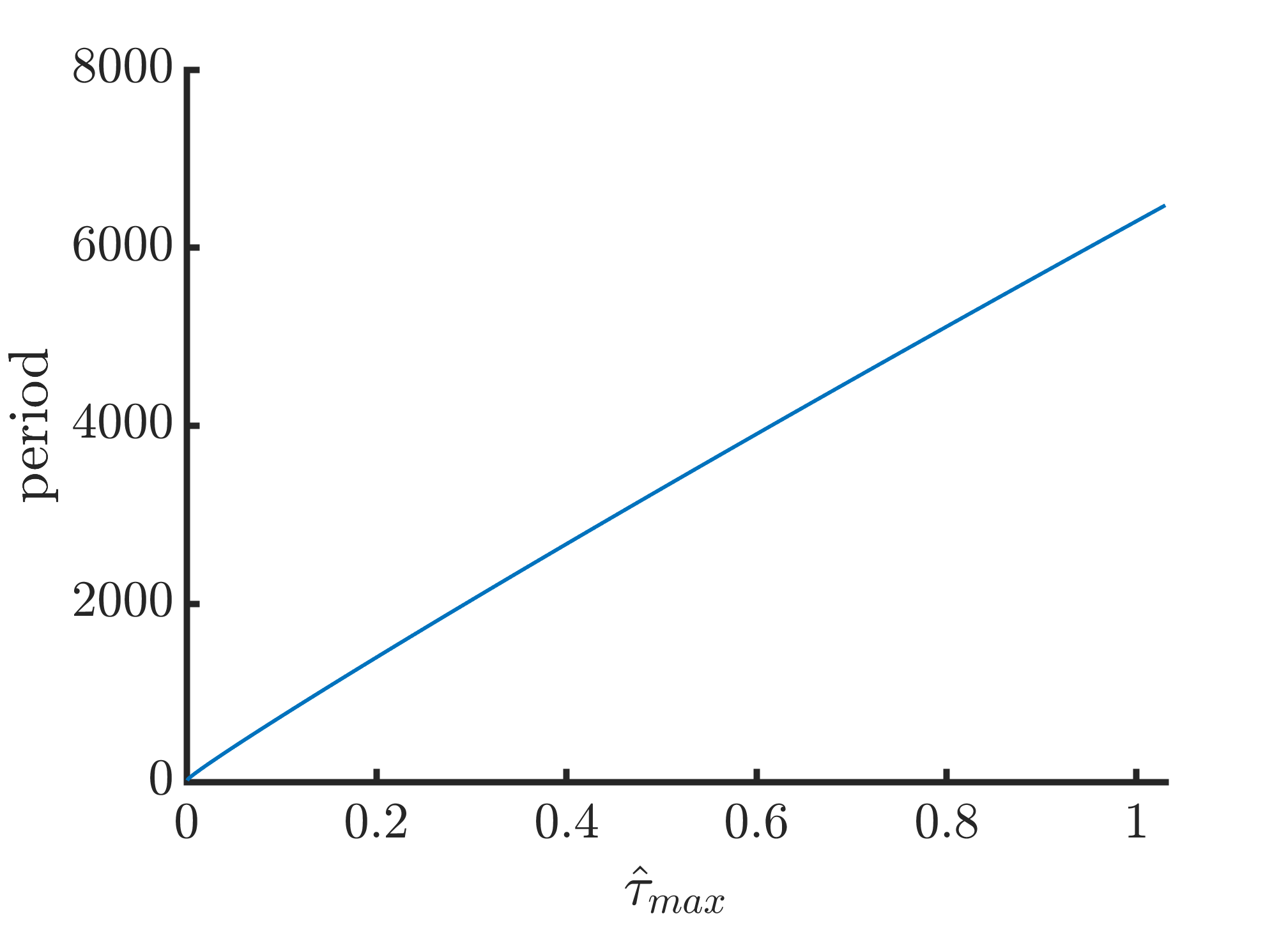}
		%	\end{subfigure}
		\begin{subfigure}[b]{0.49\linewidth}
			\includegraphics[width=\linewidth]{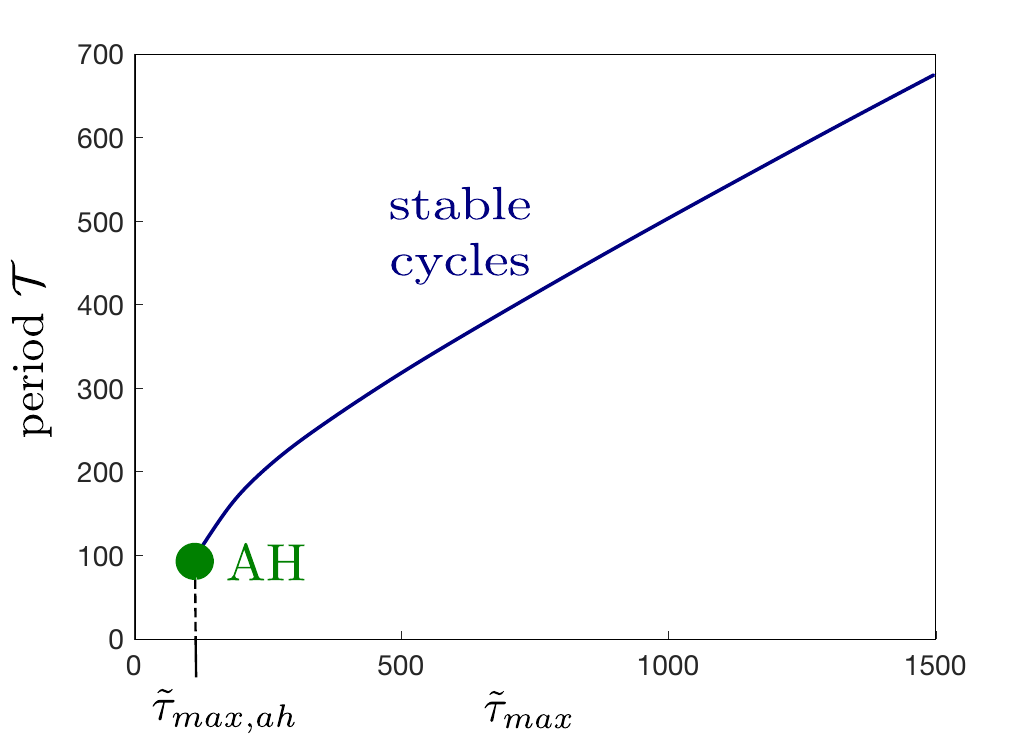}
		\end{subfigure}
		\begin{subfigure}[b]{0.49\linewidth}
			\includegraphics[width=\linewidth]{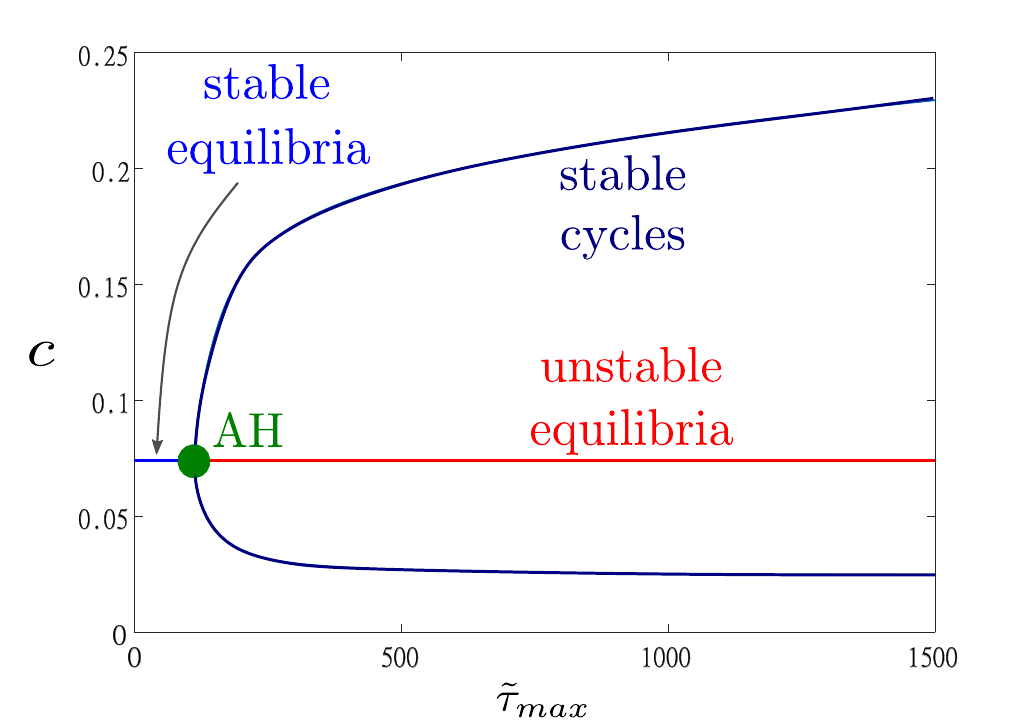}
			%	\caption{$\hat{\tau}_{max}$ = $5.4~\times~10^{4}$}
			%	\label{fig:phase_tau_max_large}
		\end{subfigure}
		\caption{Bifurcation diagrams for \eqref{eq:general_form} as a function of $\tilde \tau_{\rm max}$, calculated with MatCont \cite{MATCONT} for parameter values as in Table \ref{tab:params_post_scaling}: (Left) Period $\mathcal T$ as a function of $\tilde \tau_{\rm max}$; (Right) Maximum and minimum values of $c$ as a function of $\tilde \tau_{\rm max}$. Note that we plot with respect to $c = \sqrt \epsilon C$ here.
			The \SJ{green} disk indicates the onset of oscillations at a supercritical AH bifurcation at $\tilde \tau_{\rm max} = \tilde \tau_{\rm max,ah} \approx 112.5$ or equivalently, $\nu_{\rm max,ah} \approx 5.83\times10^{-2}$. %{\color{red}{Font sizes for tick marks etc are different in the two panels.}}
			}
		%By Theorem \ref{thm:hopf}, \SJ{$\mathcal T \propto \epsilon^{-3/2} \sqrt{\nu_{max}} =  \epsilon^{-1/2}\sqrt{\tilde \tau_{max}}$} near the onset of oscillations in the scaling regime $\tilde \tau_{max} = O(\epsilon^{-3/2})$. \SJ{Following the onset of oscillations, i.e.~for larger values of $\tilde \tau_{max}$, we observe an approximately linear dependence on $\tilde \tau_{max}$. This is consistent with Proposition \ref{prop:period}, which implies an asymptotically linear relationship} $\mathcal T \propto \tilde \tau_{max}$ in the relaxation oscillatory regime $\tilde \tau_{max} = O(\epsilon^{-2})$. VK: removed here since it is quite detailed and sits better in the body text (where it already is).
		%\SJ{(Right) corresponding bifurcation diagram in the $(\tilde \tau_{max}, c)-$plane. Note that we plot with respect to $c = \sqrt \epsilon C$ here. The equilibrium along $C = C_\ast$ is stable (unstable) for $\tilde \tau_{max}$ less than (greater than) $\tilde \tau_{max,ah}(\epsilon)$, and depicted in black (red). The green disk represents the Andronov-Hopf bifurcation, and curves of min/max $c-$values corresponding to the stable limit cycle are shown in blue.}}
		%Variation of the period of oscillations of system \eqref{eq:main1} with changes in $\hat \tau_{max}$ Left panel: behaviour for $\hat \tau_{max} = O(1)$. Right panel: Enlargement for $\hat \tau_{max}$ small. 
		\label{fig:period}
	\end{figure}
	
	%\WM{Move Remark from section 5 about "singular AH-bifurcation" happens when this equilibrium point crosses F under the variation of ?? which is a violation of the regularity condition \eqref{eq:fold_regularity_R2}. 
	%I would naively expect $O(\epsilon^{-7/4})$ period since it has to be between $O(\epsilon^{-6/4})$ and $O(\epsilon^{-8/4})$!?
	
	%Then explain the difference/similarities in the $\tau_{max}$ variation? It seems that period is different $O(\epsilon^{-3/2})$!? The scaling changes system \eqref{eq:main_R2} from singular to regular!? Could be highlighted.}
	
	\begin{comment}
	\SJ{Since we } we return to the original dimensionless value for $\tilde \tau_{\rm max}$ from Section \ref{sub:non_dimensionalisation}. It follows from \eqref{eq:h_scale_tau_max} and \eqref{eq:common_scaling} that Theorem \ref{thm:main} and Proposition \ref{prop:period} describe the relaxation oscillations in system \eqref{eq:main1} for $\tilde \tau_{\rm max} = O(\epsilon^{-2})$. %, and in Proposition \ref{prop:period}, we identified a leading order approximation for the period of relaxation oscillations which is linear in $\tau_{max}$. 
	We are motivated by numerical findings on the corresponding open-cell model \eqref{eq:Full_model_open_ct} in \cite[Figure S3]{sneyd2017dynamical}, % -- see in particular Figures S2 and S3 in the supplementary material for this work -- 
	where a supercritical Andronov-Hopf bifurcation is identified for a (dimensional) value of $\tau_{\rm max}$ of numerical order $\approx10$s, to consider the effect of varying $\tilde \tau_{\rm max}$ in system \eqref{eq:main2}. These findings have been reproduced in Figure \ref{fig:period}, which shows the period of oscillations in system \eqref{eq:main2} as a function of (dimensionless) $\tilde \tau_{\rm max}$.
	% The authors of \cite{sneyd2017dynamical} also provided numerical support for the existence of a supercritical Andronov-Hopf bifurcation as the mechanism for the onset of oscillations for a much `smaller value' of $\tau_{\rm max}$, which also has been reproduced for the model discussed here in Figure \ref{fig:period}. 
	\end{comment}
	
	\SJ{In order to identify the AH bifurcation analytically, we consider system \eqref{eq:main_R2} except with
	%the parameter rescaling \eqref{eq:common_scaling} except with
	\begin{equation}
	\label{eq:scale_tau_new}
	%\epsilon = \epsilon_{\tau_{\rm max}}^{2/3} \nu_{\rm max} , %\qquad \text{or} \qquad 
	%\tau_{\rm max} = \nu_{\rm max} \epsilon^{-3/2} ,
	\tau_{\rm max} = \delta \nu_{\rm max} = \epsilon^{1/2} \nu_{\rm max} ,
	\end{equation}
	}and vary $\nu_{\rm max} \in [0,\xi]$, where $\xi>0$ is fixed, as a bifurcation parameter. %This is equivalent to setting $\tau_{\rm max} = 
	%{\color{red}{This isn't easy to follow. With the rescaling, we are not really looking at the AH bifurcation in (4.11). Could this be rewritten as "In order to identify the AH bifurcation analytically, we consider the parameter rescaling  \eqref{eq:common_scaling} except with ... relaxation oscillations. In this case, equations (4.11) are replaced with a rescaled version and, within regime (R2), the dynamics is governed by ....}} VK: Okay, I don't think it is very clear the way it is written, but it is a matter of taste only, so happy to leave it unless the referees also have a problem.
	%
	The alternative scaling \eqref{eq:scale_tau_new} amounts to restriction to the scaling regime $\tilde \tau_{\rm max} = O(\epsilon^{-3/2})$, instead of the scaling regime $\tilde \tau_{\rm max} = O(\epsilon^{-2})$ corresponding to relaxation oscillations. In this case, the dynamics within regime (R2) are governed by
	\begin{equation}\label{sys:main_iii_rescaled}
	\begin{split}
	h' &= - \frac{1}{\tau_h(C)} \left(h - h_\infty(C)\right) , \\
	C' &= \tilde f_0(h,C) + \delta \tilde f_{\rm rem}(h, C, \delta) .
	\end{split}
	\end{equation}
	%System \eqref{sys:main_iii_rescaled} is obtained by the same methods as  from \eqref{eq:Non_dim} after a suitable time desingularisation which amounts to division of the right-hand-side by $\delta^4$. 
	System \eqref{sys:main_iii_rescaled} is similar to system \eqref{eq:main_R2}, except that there is no small parameter factoring the equation for $h'$. Setting $\delta = 0$ in \eqref{sys:main_iii_rescaled} yields
	\begin{equation}\label{sys:main_iii_rescaled_lo}
	\begin{split}
	h' &= - \frac{1}{\tau_h(C)} \left(h - h_\infty(C)\right) , \\
	C' &= \tilde f_0(h,C) ,
	\end{split}
	\end{equation}
	which has equilibria for $(h_\ast, C_\ast)$ satisfying $h_\ast = h_\infty(C_\ast) = \zeta(C_\ast)$,  where $\zeta(C)$ is given by \eqref{eq:S_R2}. In fact, it follows from our observations in Section \ref{ssec:singular_limit_analysis_R2} that system \eqref{sys:main_iii_rescaled_lo} can have one, two, or three equilibria, depending on the location in $(p,c_t)-$parameter space. We note that system \eqref{sys:main_iii_rescaled} is a \emph{regular perturbation problem}, with leading order dynamics determined by the limiting system \eqref{sys:main_iii_rescaled_lo}.
	
	\SJ{The observed AH bifurcation is described in the following result. Note that neither $h_\infty(C)$ nor $\zeta (C)$ depend on $\tilde \tau_{\rm max}$, so the number and location of equilibria is also independent of $\tilde \tau_{\rm max}$.}
	%As in Assumption [], we restrict attention regions of parameter space for which system \eqref{} has only a single equilbrium \textbf{[It follows from the discussion in Section \ref{} that the number of equilibria will not depend on $\tau_{\rm max}$:

	\begin{comment}
	\centering
	\begin{subfigure}[b]{0.48\linewidth}
	\includegraphics[width=\linewidth]{Images/timeseries_nondim.png}
	\end{subfigure}
	\begin{subfigure}[b]{0.48\linewidth}
	\includegraphics[width=\linewidth]{Images/timeseries_nondim_zoom.png}
	\end{subfigure}

	%We are interested in the possibility of an Andronov-Hopf bifurcation in system \eqref{sys:main_iii_rescaled_lo}. Since neither $h_\infty(C)$ nor $\zeta (C)$ depend on $\tilde \tau_{\rm max}$, the number and location of any equilibria are also independent of $\tilde \tau_{\rm max}$. %Hence, region of $(p,c_t)-$space specified in Assumption \ref{app:1} likewise corresponds to the region in which system \eqref{sys:main_iii_rescaled_lo} has a unique unstable node. 
	%We also note that the inequality $C_\ast > C_F$ is sufficient to restrict to the case in which there is a unique equilibrium, which perturbs to a nearby equilibrium $q_\delta$ in system \eqref{sys:main_iii_rescaled} for all $0 \leq \delta \ll 1$ sufficiently small. 
	
	%\
	
	%The following result describes the observed Andronov-Hopf bifurcation.}
	%Assumption 2 is no longer applicable here though, since the stability of the equilibrium is determined by a different expression which \textit{will} depend on $\tau_{\rm max}$.
	\end{comment}
	
	\begin{theorem}
	\label{thm:hopf}
	Consider system \eqref{sys:main_iii_rescaled_lo}, with a unique equilibrium $q = (h_\ast, C_\ast)$ such that
	\begin{equation}
	\label{eq:det_cond}
	C_\ast > C_F, \qquad  \big| D_C h_\infty(C_\ast) \big| > \frac{D_C \tilde f_0(q_\delta)}{D_h \tilde f_0(q_\delta)} ,
	\end{equation}
	where the coordinate $C_\ast$ can be written explicitly in terms of the model parameters. Then there exists $\delta_0 >0$ such that for each $\delta \in [0, \delta_0)$, system \eqref{sys:main_iii_rescaled} has a unique equilibrium $q_\delta$ such that $q_\delta \to q$ as $\delta \to 0$.  Furthermore, $q_{\delta}$ undergoes an AH bifurcation \SJ{for}
	\begin{equation}
	\label{eq:hopf_value}
	\nu_{\rm max,ah}(\delta) = \frac{\left(K_h^4 + C_\ast^4 \right) C_\ast }{2 A_{\text{SERCA}} K_h^4(C_\ast^2 - 2 K \gamma^2 c_t^2)} + O(\delta) ,
	%\delta w(C_\ast) + O(\delta^2) ,
	\end{equation}
	%	\SJ{[No $K_\tau$.]} where $w(C_\ast)$ is a smooth function specified in Appendix \ref{ssub:hopf_correction}, equation \eqref{eq:correction_term}. 
	or, equivalently,
	\[
	\tilde \tau_{\rm max,ah}(\epsilon) = \epsilon^{-3/2} \nu_{\rm max,ah}(\sqrt{\epsilon}) = \frac{\left(K_h^4 + C_\ast^4 \right) C_\ast }{2 A_{\text{SERCA}} K_h^4(C_\ast^2 - 2 K \gamma^2 c_t^2)} \epsilon^{-3/2} + O(\epsilon^{-1}) .
	\]
	\end{theorem}
	
	\begin{proof}
%	The proof is based on a fixed point analysis, and deferred to Appendix \ref{app:hopf_proof}. The conditions in \eqref{eq:det_cond} ensure satisfaction of the positive determinant condition associated with the AH point. The fact that $C_\ast$ can be written explicitly in terms of the model parameters follows from the fact that $C_\ast^2$ is a solution to the cubic polynomial \eqref{eqs:reduced_equilibria_R2b}.
	%
	The fact that $C_\ast$ can be written explicitly in terms of the model parameters follows from the fact that $C_\ast^2$ is a solution to the cubic polynomial \eqref{eqs:reduced_equilibria_R2b}.
	
	Equilibria of system \eqref{sys:main_iii_rescaled} satisfy $\tilde f_0(h, C) + \delta \tilde f_{\rm rem}(h, C, \delta) = 0 $. For $\delta = 0$ in particular we have $\tilde f_0(\zeta(C), C) = 0$ and $D_h \tilde f_0(\zeta(C), C) > 0$ for all $C>0$, recalling the expressions in \eqref{eq:K2_rhs}. It follows by the implicit function theorem that there exists an open interval $U$, a constant $\tilde \delta_0 >0$ and a smooth function $\varphi:U \times [0,\delta_0) \to \mathbb R$ such that for all $(C,\delta) \in U \times [0, \tilde \delta_0)$ we have
	\[
	\tilde f_0(\varphi(C,\delta) , C) + \delta \tilde f_{\rm rem}(\varphi(C,\delta) , C, \delta) = 0,
	\]
	where $\varphi(C,\delta) = \zeta(C) + \delta \psi(C,\delta)$ for a smooth function $\psi$. %Hence, the equilibrium $q_\delta = (h_\ast, C_\ast)$ satisfies
	%\[
	%h_\ast = \zeta(C_\ast) + \delta \psi(C_\ast,\delta) = h_\infty(C_\ast), \qquad \delta \in [0, \tilde \delta_0) .
	%\]
	Evaluating the Jacobian $J$ at $q_\delta$ yields trace
	\[
	\text{Tr} J(q_\delta) = - \frac{1}{\tau_h(C_\ast)} + D_C \tilde f_0(q_\delta) + \delta \tilde f_{\rm rem}(q_\delta, \delta) ,
	\]
	and determinant
	\begin{equation}
		\label{eq:det}
		\det J(q_\delta) = - \frac{1}{\tau_h(C_\ast)} \left( D_C \tilde f_0(q_\delta) + D_C h_\infty(C_\ast) D_h \tilde f_0(q_\delta) \right) + O(\delta) .
	\end{equation}
	Since we assume that $C_\ast > C_F$, the \SJ{inequalities $D_C \tilde f_0(q_\delta) > 0$ and $D_h \tilde f_0(q_\delta) > 0$ follow by equations \eqref{EV_R2} and \eqref{eq:fold_nondegeneracy}}. Since $D_C h_\infty(C_\ast)<0$ follows by direct calculations, 
	%\[
	%D_C h_\infty(C_\ast) = - \frac{4 K_h^4 C_\ast^3}{(K_h^4 + C_\ast^4)^2} < 0,
	%\]
	the determinant condition $\det J(q_\delta) > 0$ follows by the second condition in \eqref{eq:det_cond} for all $\delta$ sufficiently small. Finally, the Andronov-Hopf condition $\text{Tr} J(q_\delta) = 0$ can be solved for $\delta \in [0,\delta_0)$, $\delta_0 > 0$ sufficiently small, using the implicit function theorem. We obtain
	\begin{equation}
		\label{eq:hopf_approx}
		\nu_{\rm max,ah}(\delta) = \frac{K_h^4 + C_\ast^4}{K_h^4 D_C \tilde f_0(\zeta(C_\ast),C_\ast)} + \delta w(C_\ast) + O(\delta^2) ,
	\end{equation}
	where
	\begin{equation}
		\label{eq:correction}
		w(C_\ast) = 4 A_{\text{\text{IPR}}} \gamma c_t C_\ast^3 \psi(C_\ast,0) - 5 A_{\text{\text{IPR}}} C_\ast^4 \zeta(C_\ast) - 2 K A_{\text{SERCA}} \gamma^2 c_t .
	\end{equation}
	The function $\psi(C_\ast, 0)$ can be approximated by standard matching techniques in order to obtain an explicit higher order correction, but we omit this calculation for brevity. %; see Appendix \ref{ssub:hopf_correction}. 
	Expression \eqref{eq:hopf_value} follows from \eqref{eq:hopf_approx} after substituting the expression in \eqref{EV_R2} for $D_C \tilde f_0(\zeta(C_\ast),C_\ast)$.
	\end{proof}

	Numerical observations indicate a supercritical bifurcation and hence, the existence of nearby stable oscillations; see Remark \ref{rem:hopf_numerics} below. % for $\tau_{\rm max} \in (\tau_{\rm max,ah}, \tau_{\rm max,ah} + k_+)$ for some $k_+ > 0$. 
	%We do not consider whether the small-amplitude cycles connect with the large amplitude relaxation oscillations observed for $\tau_{\rm max} = O(\epsilon^{-2})$ in this work, however Figure \ref{fig:period} provides numerical support for this claim; see also Remark \ref{rem:connection} below. 
	The qualitative shape of the profile in Figure \ref{fig:period} can also be explained by our findings. \SJ{Let $\mathcal T$ denote the oscillation period on the fastest time-scale $t$, as in Proposition \ref{prop:period}. Then}
	
	\smallskip
	
	\begin{enumerate}
	\item[(i)] \SJ{The fixed point analysis used to prove Theorem \ref{thm:hopf}, % in Appendix \ref{app:hopf_proof}, 
		in particular the form of the determinant \eqref{eq:det},} \SJ{implies an oscillation period \SJ{$\mathcal T \propto \epsilon^{-3/2} \sqrt \nu_{\rm max} = \epsilon^{-1/2} \sqrt{\tilde \tau_{\rm max}}$} % and leading order dependence $\mathcal T \propto \sqrt{\tilde \tau_{\rm max}}$ 
	for parameter values $\tilde \tau_{\rm max} = O(\epsilon^{-3/2})$ close to \SJ{the AH bifurcation.} %\propto \sqrt{\tilde \tau_{\rm max}}$ 
	%	the Andronov-Hopf bifurcation;
	\item[(ii)] Proposition \ref{prop:period} implies an oscillation period $\mathcal T = O(\epsilon^{-2})$ and leading order dependence $\mathcal T \propto \tilde \tau_{\rm max}$ for parameter values $\tilde \tau_{\rm max} = O(\epsilon^{-2})$, in the relaxation oscillatory regime.}
	\end{enumerate}

	\begin{remark}
	\label{rem:hopf_numerics}
	The conditions for the applicability of Theorem \ref{thm:hopf} are satisfied for system \eqref{sys:main_iii_rescaled} with parameter values taken from Table \ref{tab:params_post_scaling} and $\delta = \sqrt{2.5 \times 10^{-3}} = 5 \times 10^{-2}$. The conditions in \eqref{eq:det_cond} are satisfied since
	\[
	\big| D_C h_\infty(C_\ast) \big| \approx 0.20 > \frac{D_C \tilde f_0(q_\delta)}{D_h \tilde f_0(q_\delta)} \approx 8.3 \times 10^{-2} ,
	\]
	and the AH bifurcation occurs at $q_\delta = (h_\ast, C_\ast) \approx (0.0792, 1.48 )$, so $C_\ast > C_F \approx 0.68$. We obtain a numerical estimate of $\nu_{\rm max,ah} \approx 5.83\times 10^{-2}$, %\approx 1.4 \cdot 10^{-2}$ 
	in close agreement with the leading order estimate \eqref{eq:hopf_value} as $\delta \to 0$:
	\[
	\nu_{\rm max,ah}(0) = \frac{\left(K_h^4 + C_\ast^4 \right) C_\ast }{2 A_{\text{SERCA}} K_h^4 (C_\ast^2 - 2 K \gamma^2 c_t^2)} %+ \delta w(C_\ast) + O(\delta^2) 
	\approx 5.42\times 10^{-2} ,
	\]
	where we have used the %expression for $w(C_\ast)$ in equation \eqref{eq:correction_term} and the
	numerical value for $C_\ast$. A first Lyapunov coefficient $l_1 \approx -3.13 < 0$ was calculated numerically in the software package MatCont \cite{MATCONT}, indicating a supercritical bifurcation.
	\end{remark}

\end{document}